\tikzstyle{hvector}=[inner sep=2pt,draw=blue!50,fill=blue!10,thick]
\tikzstyle{unit}=[inner sep=2pt,shape=circle, draw]
\tikzstyle{counit}=[inner sep=2pt,shape=circle, draw,fill=gray]
\tikzstyle{antipode}=[inner sep=2pt,shape=rectangle, draw]
\tikzstyle{cocycle}=[inner sep=2pt,shape=circle, draw]
\tikzstyle{twistedm}=[inner sep=2pt,shape=circle, fill=gray]
\tikzstyle{autom}=[inner sep=2pt,shape=circle, draw]
\tikzstyle{coact}=[inner sep=2pt,shape=circle, fill=black]
\newtheorem{theorem}{Theorem}[section]
\theoremstyle{definition}
\newtheorem{proposition}[theorem]{Proposition}
\newtheorem{lemma}[theorem]{Lemma}
\newtheorem{definition}[theorem]{Definition}
\newtheorem{remark}[theorem]{Remark}
\newtheorem{conjecture}[theorem]{Conjecture}
\def\BZ{\mathbbm Z}
\def\BR{\mathbbm R}
\def\BF{\mathbbm F}
\def\s{\sigma}
\def\be{\begin{equation}}
\def\ee{\end{equation}}
\def\End{\mathrm{End}}
\newcommand{\bF}{\mathbb{F}}
\newcommand{\Z}{\mathbb{Z}}
\newcommand{\antipode}{S}
\newcommand{\mysym}{\sigma}
\renewcommand\thepart{\@Roman\c@part}%
\renewcommand\part{%
   \if@noskipsec \leavevmode \fi
   \par
   \addvspace{6.7ex}%
   \@afterindentfalse
   \secdef\@part\@spart}
\def\@part[#1]#2{%
    \ifnum \c@secnumdepth >\m@ne
      \refstepcounter{part}%
      \addcontentsline{toc}{part}{Part~\thepart.\ #1}%
    \else
      \addcontentsline{toc}{part}{#1}%
    \fi
    {\parindent \z@ \raggedright
     \interlinepenalty \@M
     \normalfont
     \ifnum \c@secnumdepth >\m@ne
       \centering\large\scshape \partname~\thepart.%
       \hspace{1ex}%
     \fi%
     \large\scshape #2%
     \markboth{}{}\par}%
    \nobreak
    \vskip 4.7ex
    \@afterheading}
  \def\@spart#1{
  \refstepcounter{part}%
  \addcontentsline{toc}{part}{#1}%
    {\parindent \z@ \raggedright
     \interlinepenalty \@M
     \normalfont
     \centering\large\scshape #1\par}%
     \nobreak
     \vskip 4.7ex
     \@afterheading}
\renewcommand*\l@part[2]{%
  \ifnum \c@tocdepth >-2\relax
    \addpenalty\@secpenalty
    \addvspace{0.75em \@plus\p@}%
    \begingroup
      \parindent \z@ \rightskip \@pnumwidth
      \parfillskip -\@pnumwidth
      {\leavevmode
       \normalsize \bfseries #1\hfil \hb@xt@\@pnumwidth{\hss #2}}\par
       \nobreak
       \if@compatibility
         \global\@nobreaktrue
         \everypar{\global\@nobreakfalse\everypar{}}%
      \fi
    \endgroup
  \fi}
\def\l@subsection{\@tocline{2}{0pt}{2pc}{6pc}{}}
\begin{document}

\title[Multivariable knot polynomials]{
Multivariable knot polynomials from braided Hopf algebras with automorphisms}
\author{Stavros Garoufalidis}
\address{
  International Center for Mathematics, Department of Mathematics \\
  Southern University of Science and Technology \\
  Shenzhen, China \newline
  {\tt \url{http://people.mpim-bonn.mpg.de/stavros}}}
\email{stavros@mpim-bonn.mpg.de}
\author{Rinat Kashaev}
\address{Section de Math\'ematiques, Universit\'e de Gen\`eve \\
2-4 rue du Li\`evre, Case Postale 64, 1211 Gen\`eve 4, Switzerland \newline
         {\tt \url{http://www.unige.ch/math/folks/kashaev}}}
\email{Rinat.Kashaev@unige.ch}
\thanks{
  {\em Key words and phrases:}
  Knots, Jones polynomial, ADO invariant, $R$-matrices, Yang--Baxter equation,
  Hopf algebras, braided Hopf algebras, tensor algebras, braided tensor algebras,
  quantum groups, Yetter--Drinfel'd modules, Nichols algebras, chirality, mutation,
  knot genus. 
}

\date{22 April, 2024}

\begin{abstract}
  We construct knot invariants from solutions to the
  Yang--Baxter equation associated to appropriately generalized left/right
  Yetter--Drinfel'd modules over a braided Hopf algebra with an automorphism.
  When applied to Nichols algebras, our method reproduces known knot polynomials and
  naturally produces  multivariable polynomial invariants of knots. We discuss 
  in detail Nichols algebras of rank $1$ which recover the ADO and the colored Jones
  polynomials of a knot and two sequences of examples of rank $2$ Nichols
  algebras, one of which starts with the product of two Alexander polynomials, and
  then conjecturally the Harper polynomial. The second sequence starts with the
  Links--Gould invariant (conjecturally), and then with a new 2-variable knot
  polynomial that detects chirality and mutation, and whose degree gives sharp
  bounds for the genus for a sample of 30 computed knots. 
\end{abstract}

\maketitle

{\footnotesize
\tableofcontents
}


\section{Introduction}
\label{sec.intro}

Jones's discovery of his famous polynomial of knots had an enormous influence
in knot theory and connected the subject of low dimensional topology and
hyperbolic geometry to mathematical physics, giving rise to quantum
topology~\cite{Jones,Thurston,Witten:CS}.

The Jones polynomial was originally defined by thinking of a knot as the closure
of a braid, and by taking the (suitably normalized) trace of
representations of the braid groups (with an arbitrary number of strands), which
themselves were determined by a vector space $V$ and an automorphism
$R \in \operatorname{Aut}(V \otimes V)$ that satisfies the Yang--Baxter equation
\be
\label{YB}
R_1 R_2 R_1 = R_2 R_1 R_2 \in \End(V \otimes V \otimes V) \,, 
\ee
where $R_1 =R \otimes I, R_2 = I \otimes R$.

It was soon realized that representations of simple Lie algebras and their
deformations, known as quantum groups, were a natural source of solutions to
the Yang--Baxter equations. This led to a plethora of polynomial invariants
of knots; see for example Turaev~\cite{Tu:YB,RT:ribbon}. 

Another source of polynomial invariants (one for every complex root of unity)
came from the work of Akutsu--Deguchi--Ohtsuki~\cite{ADO}. It was conjectured by
Habiro~\cite[Conj.7.4]{Habiro:WRT} and later shown by Willets in~\cite{Willets} that
the collection of the colored Jones polynomials of a knot (colored by the irreducible
representations of $\mathfrak{sl}_2$) determines and is determined by the collection
of ADO invariants at roots of unity. 

The definition of the above invariants requires an $R$-matrix together with a
(ribbon) enhancement of it which, roughly speaking, is an endomorphism of $V$ required
to define the quantum trace, and hence the knot invariant. This comes from the 
Reshetikhin--Turaev functor which forms the basis of knot/link invariants in
arbitrary 3-manifolds~\cite{RT:ribbon}. 

An $R$-matrix alone is in principle sufficient to define knot invariants. This was
clarified by the second author by constructing invariants of knots from an $R$-matrix
that satisfies some non-degeneracy conditions, called rigidity in~\cite{Ka:longknots}.
Rigid $R$-matrices indeed allow one to define state-sum invariants of planar projections
of knots without any extra data. A description of how these invariants are defined
is given in Section~\ref{sec.RT} below.

One can construct rigid $R$-matrices from any Hopf algebra with invertible antipode
through Drinfel'd's quantum double construction which can be put into pure algebraic
setting of multilinear algebra without finiteness assumptions of the
Hopf algebra~\cite{Ka:hopf}. 

In this paper, we propose a different approach of producing rigid $R$-matrices
that does not use the quantum double of a Hopf algebra. The construction of these
$R$-matrices, and the corresponding knot invariants, is schematically summarized
in the following steps:
\be
\label{steps}
\Bigg\{ \begin{matrix}
  \text{Braided} \\ \text{Hopf algebras} \\ \text{with autos}
\end{matrix}\Bigg\}
\rightarrow
\Bigg\{ \begin{matrix} \text{Braided} \\ \text{left/right YD modules} \\
  \text{with automorphisms}
\end{matrix}\Bigg\}
\rightarrow \{\text{$R$-matrices}\}
\rightarrow \{\text{Knot invariants}\}
\ee

The last arrow in~\eqref{steps} is the well-known Reshetikhin--Turaev functor
reviewed in Section~\ref{sec.RT}.
The first and the second arrows are discussed in Sections~\ref{sub.H2DY}
and~\ref{sub.DY2R} below. These sections are written in the maximum level of
abstraction, using the language of category theory, for potential future applications
to braided categories not coming from vector spaces. 

The knot invariants defined by the steps in~\eqref{steps} require as input a braided
Hopf algebra with automorphisms. 
A concrete source of braided Hopf algebras with a rich group of automorphisms are
the Nichols algebras discussed in detail in Section~\ref{sec.tensor}. Roughly speaking,
a Nichols algebra is the quotient of a naturally graded tensor algebra of a braided
vector space by a suitable grading preserving maximal Hopf ideal. Any choice of an
automorphism gives rise to appropriately generalized left and right Yetter--Drinfel'd
module structures over the Nichols algebra. It turns out that the latter
admits a natural quotient space in the case
of the left generalized Yetter--Drinfel'd module and a natural submodule in the case
of the right generalized Yetter--Drinfel'd module. In a sense, the choice
of the braided Hopf algebra automorphism seems to correspond to a choice of a
quantum double representation in the traditional approach.

In Sections~\ref{sec.rank1} and~\ref{sec.rank2} we study these generalized
Yetter--Drinfel'd
modules of Nichols algebras of diagonal type in the cases when the input braided
vector space is of dimension one and two, which gives rise respectively to one and
two-variable polynomial invariants of knots.

We end this introduction with some further comments.

\noindent 1. 
An important feature of our construction is a braided Hopf algebra with an
automorphism. The nontriviality of the automorphism is an essential part for
constructing nontrivial knot invariants. In a sense the group of automorphisms
replaces the representation theory.

\noindent 2. 
Our approach unifies previous constructions of knot invariants (notably the colored
Jones and the ADO polynomials, the Links-Gould and the Harper polynomials) coming from
super/quantum groups, but also leads to a systematic construction of multivariable
polynomial invariants of knots beyond the quantum groups.

\noindent 3. 
A feature of the knot polynomials that we construct is that they depend on 
variables coming both from the braiding and the automorphism of the braided Hopf algebra.
It is likely that some of our polynomial invariants of knots
coincide with conjectured knot invariants that are discussed in the physics
literature; see for instance the work of Gukov et al~\cite{Gukov-Park}.

\noindent 4. 
We expect that some of these invariants come from finite type invariants of
knots~\cite{B-N:Vassiliev}, though we have not investigated this at the moment.

\noindent 5. 
We expect that our polynomial invariants give lower bounds for the Seifert genus
of a knot (as is known for the classical Alexander polynomial, but also in some
other cases already, see~\cite{NVdV2022,Kohli-Tahar}). 

\noindent 6. 
Regarding $q$-holonomic aspects, we expect our invariants to satisfy the analogous
$q$-holonomic properties (that is, linear $q$-difference equations), as those that
come from quantum groups (such as the colored Jones polynomials associated to a
simple Lie algebra and parametrized by weights of irreducible
representations~\cite{GL:qholo}) or those defined at roots of unity (such as the
ADO invariant~\cite{BDGG:ADO}). 

\noindent 7. 
Finally, regarding asymptotic aspects, we expect that our invariants satisfy
versions of the Volume Conjecture, analogous to those of the colored Jones
polynomials~\cite{K97, MM} and the ADO invariants~\cite{Murakami}. 


\section{From $R$-matrices to knot invariants}
\label{sec.RT}

In this section we briefly describe the Reshetikhin--Turaev functor which allows
to construct knot invariants from
$R$-matrices. These invariants are defined by state-sums~\cite{RT:ribbon}, using a
variation of the construction from the second author's paper~\cite{Ka:longknots}.
There are three ingredients involved in this construction, namely suitable
knot diagrams, rigid $R$-matrices, and the corresponding state-sums.

\subsection{Knot diagrams and rigid $R$-matrices}
\label{sub.normalR}

We use a diagrammatic notation which is very important for the construction of  knot
invariants  and has a long and successful history in knot
theory~\cite{RT:ribbon,Tu:book}. Basically, knots are represented by generic planar
projections composed of local pieces which correspond to structural morphisms 
of a braided vector space, while the compatibility conditions ensure invariance
under changes of planar projections. The notation leads naturally to the concept
of a braided monoidal category, not necessarily in an abelian category, that vastly
generalizes the notion of a braided vector space~\cite{Tu:Vi}.

Following~\cite{Ka:longknots}, we now explain concretely the knot diagrams used. 
An (oriented)  {\color{blue}  long knot diagram} $K$ is an oriented knot diagram
in $\BR^2$ with two open ends called ``{\color{blue} in}'' and ``{\color{blue} out}'':
$$
K=
\begin{tikzpicture}[scale=3/2,baseline=-3]
\node[draw] (a) at (0,0) {$K$};
\draw[thick,->] (0,-.5)--(a)--(0,.5);
\node (north) at (0.25,.4){\tiny out};\node (south) at (0.15,-.4){\tiny in};
\end{tikzpicture}\qquad \text{Examples}\colon 
\quad K=\begin{tikzpicture}[baseline=20,xscale=.5, yscale=.6]
\coordinate (a0) at (0,0);
\coordinate (a1) at (1,2);
\coordinate (a2) at (0,2);
\coordinate (a3) at (2,.5);
\coordinate (a4) at (1,.5);
\coordinate (a5) at (2,2.5);
\draw[thick] (a0) to [out=90,in=-90]  (a1);
\draw[thick] (a1) to [out=90,in=90]  (a2);
\draw[thick] (a3) to [out=-90,in=-90]  (a4);
\draw[thick,->] (a4) to [out=90,in=-90]  (a5);
\draw[line width=3, color=white] (a2) to [out=-90,in=90]  (a3);
\draw[thick] (a2) to [out=-90,in=90]  (a3);
\end{tikzpicture}, 
\quad K=
\begin{tikzpicture}[xscale=.25,yscale=.14,baseline=20]
\coordinate (a0) at (0,0);
\coordinate (a1) at (4,7);
\coordinate (a2) at (2,9);
\coordinate (a3) at (0,7);
\coordinate (a4) at (4,4);
\coordinate (a5) at (0,4);
\coordinate (a6) at (2,7);
\coordinate (a7) at (0,11);
\draw[thick] (a1) to [out=90,in=0] (a2);
\draw[thick] (a4) to [out=-90,in=-90] (a5);
\draw[thick,->]  (a6) to [out=90,in=-90] (a7);
\draw[line width=3pt,white] (a0) to [out=90,in=-90] (a1);
\draw[thick] (a0) to [out=90,in=-90] (a1);
\draw[line width=3pt,white] (a2) to [out=180,in=90] (a3);
\draw[thick] (a2) to [out=180,in=90] (a3);
\draw[line width=3pt,white] (a3) to [out=-90,in=90] (a4);
\draw[thick] (a3) to [out=-90,in=90] (a4);
\draw[line width=3pt,white] (a5) to [out=90,in=-90] (a6);
\draw[thick]  (a5) to [out=90,in=-90] (a6);
\end{tikzpicture} \,.
$$
A long knot diagram can be closed to a planar projection of a knot:
$\ 
\begin{tikzpicture}[baseline=-2]
\node[draw] (a) at (0,0) {$K$};
\draw[thick,->] (0,-.5)--(a)--(0,.5);
\end{tikzpicture}\ 
\mapsto
\ 
\begin{tikzpicture}[baseline=-2]
\coordinate (b) at (-.7,0);
\node[draw] (a) at (0,0) {$K$};
\draw[thick,->] (a) to [out=90,in=90] (b);
\draw[thick] (b) to [out=-90,in=-90] (a);
\end{tikzpicture} \,.
$

The vertical direction plays a preferred role for long knot diagrams. 

The  {\color{blue} normalization} $\dot{K}$ of $K$ is the diagram obtained from $K$
by the replacements of local extrema oriented from left to right
\be
\label{normalization}
\ 
\begin{tikzpicture}[baseline=5, xscale=0.4, yscale=0.45]
\draw[thick] (0,0) to [out=90,in=180] (1,1);
\draw[thick,->] (1,1) to [out=0,in=90] (2,0);
\end{tikzpicture}
\ \mapsto
\begin{tikzpicture}[baseline=5,xscale=.5,yscale=0.15]
\coordinate (a0) at (0,0);
\coordinate (a1) at (1,3);
\coordinate (a2) at (2,0);
\draw[thick] (a1) to [out=180,in=135]  (a2);
\draw[line width=3, color=white] (a0) to [out=45,in=0]  (a1);
\draw[thick,->] (a0) to [out=45,in=0]  (a1);
\end{tikzpicture}
\quad\text{ and } \quad
\begin{tikzpicture}[baseline=-7, xscale=0.4, yscale=0.45]
\draw[thick] (0,0) to [out=-90,in=180] (1,-1);
\draw[thick,->] (1,-1) to [out=0,in=-90] (2,0);
\end{tikzpicture}\ 
\mapsto
\begin{tikzpicture}[baseline=-7,xscale=.5,yscale=0.15]
\coordinate (a0) at (0,0);
\coordinate (a1) at (1,-3);
\coordinate (a2) at (2,0);
\draw[thick] (a1) to [out=180,in=-135]  (a2);
\draw[line width=3, color=white] (a0) to [out=-45,in=0]  (a1);
\draw[thick,->] (a0) to [out=-45,in=0]  (a1);
\end{tikzpicture}
\ee
at all posible locations of $K$. We say that $K$ is normal if $K=\dot{K}$.

The {\color{blue} building blocks} of normal diagrams are given by four types
of segments 
\be
\label{4segments}
\begin{tikzpicture}[yscale=.4,baseline]
\draw[thick,->] (0,0) to [out=90,in=-90] (0,1);
\end{tikzpicture}\ ,\ 
\begin{tikzpicture}[yscale=.4,baseline]
\draw[thick,<-] (0,0) to [out=90,in=-90] (0,1);
\end{tikzpicture}\ ,\ \quad
\begin{tikzpicture}[xscale=.7,yscale=1,baseline]
\draw[thick,<-] (0,0) to [out=90,in=90] (1,0);
\end{tikzpicture}\ ,\ 
\begin{tikzpicture}[xscale=.7,yscale=1,baseline=20]
\draw[thick,<-] (0,1) to [out=-90,in=-90] (1,1);
\end{tikzpicture}
\ee
and eight types of crossings (four positive and four negative ones)
\be
\label{8crossings}
\begin{tikzpicture}[scale=.7,baseline]
\draw[thick,<-] (0,1) to [out=-90,in=90] (1,0);
\draw[line width=3pt,white] (1,1) to [out=-90,in=90] (0,0);
\draw[thick,<-] (1,1) to [out=-90,in=90] (0,0);
\end{tikzpicture}\ ,\ 
\begin{tikzpicture}[scale=.7,baseline]
\draw[thick,<-] (1,1) to [out=-90,in=90] (0,0);
\draw[line width=3pt,white] (0,1) to [out=-90,in=90] (1,0);
\draw[thick,<-] (0,1) to [out=-90,in=90] (1,0);
\end{tikzpicture}\ ,\
\quad
\begin{tikzpicture}[scale=.7,baseline]
\draw[thick,->] (1,1) to [out=-90,in=90] (0,0);
\draw[line width=3pt,white] (0,1) to [out=-90,in=90] (1,0);
\draw[thick,<-] (0,1) to [out=-90,in=90] (1,0);
\end{tikzpicture}\ ,\ 
\begin{tikzpicture}[scale=.7,baseline]
\draw[thick,<-] (0,1) to [out=-90,in=90] (1,0);
\draw[line width=3pt,white] (1,1) to [out=-90,in=90] (0,0);
\draw[thick,->] (1,1) to [out=-90,in=90] (0,0);
\end{tikzpicture}\ ,\
\quad
\begin{tikzpicture}[scale=.7,baseline]
\draw[thick,->] (0,1) to [out=-90,in=90] (1,0);
\draw[line width=3pt,white] (1,1) to [out=-90,in=90] (0,0);
\draw[thick,->] (1,1) to [out=-90,in=90] (0,0);
\end{tikzpicture}\ ,\ 
\begin{tikzpicture}[scale=.7,baseline]
\draw[thick,->] (1,1) to [out=-90,in=90] (0,0);
\draw[line width=3pt,white] (0,1) to [out=-90,in=90] (1,0);
\draw[thick,->] (0,1) to [out=-90,in=90] (1,0);
\end{tikzpicture}\ ,\
\quad
\begin{tikzpicture}[scale=.7,baseline]
\draw[thick,<-] (1,1) to [out=-90,in=90] (0,0);
\draw[line width=3pt,white] (0,1) to [out=-90,in=90] (1,0);
\draw[thick,->] (0,1) to [out=-90,in=90] (1,0);
\end{tikzpicture}\ ,\ 
\begin{tikzpicture}[scale=.7,baseline]
\draw[thick,->] (0,1) to [out=-90,in=90] (1,0);
\draw[line width=3pt,white] (1,1) to [out=-90,in=90] (0,0);
\draw[thick,<-] (1,1) to [out=-90,in=90] (0,0);
\end{tikzpicture} \,.
\ee

We next define $R$-matrices and their rigid version. 
An  {\color{blue} $R$-matrix} over a vector space $V$ is an automorphism
$r\in \operatorname{Aut}( V\otimes V)$ of $V \otimes V$ that satisfies the
quantum {\color{blue} Yang--Baxter relation} 
\be
\label{rYB}
r_1r_2r_1=r_2r_1r_2,\qquad
r_1:=r\otimes \operatorname{id}_{V},\,\, r_2:=\operatorname{id}_{V}\otimes r \,.
\ee
Let $V^*$ denote the dual vector space and
$\langle \cdot, \cdot \rangle: V^* \otimes V \to \bF$ denote the natural evaluation
map. Assume that $V$ is a finite-dimensional, and fix a basis 
$B$ of $V$ and the corresponding dual basis $\{b^*\}_{b\in B}$ of $V^*$.

Given $f\in \operatorname{End}( V\otimes V)$, we define its
{\color{blue}partial transpose} $\tilde{f}\colon V^*\otimes V\to V\otimes V^*$ by
\be
\label{ftrans}
\tilde f (a^*\otimes b)=
\sum_{c,d\in B}\langle a^*\otimes c^*,f(b\otimes d)\rangle c\otimes d^*,\quad
\begin{tikzpicture}[baseline=10, xscale=1, yscale=1]
\coordinate (nw) at (0,1);
\coordinate (ne) at (1,1);
\coordinate (sw) at (0,0);
\coordinate (se) at (1,0);
\node[rectangle,inner sep=2pt,draw] (c) at (1/2,1/2) {\tiny$f$};
\draw[thick] (c.north west) to [out=135, in= -90] (nw);
\draw[thick] (c.north east) to [out=45, in= -90] (ne);
\draw[thick] (c.south east) to [out=-45, in= 90] (se);
\draw[thick] (c.south west) to [out=-135, in= 90] (sw);  
\end{tikzpicture}
\mapsto
\begin{tikzpicture}[baseline=10, xscale=1, yscale=1]
\coordinate (nw) at (0,1);
\coordinate (ne) at (1,1);
\coordinate (sw) at (0,0);
\coordinate (se) at (1,0);
\node[rectangle,inner sep=2pt,draw] (c) at (1/2,1/2) {\tiny$\tilde f$};
\draw[thick] (c.north west) to [out=135, in= -90] (nw);
\draw[blue, thick] (c.north east) to [out=45, in= -90] (ne);
\draw[thick] (c.south east) to [out=-45, in= 90] (se);
\draw[blue, thick] (c.south west) to [out=-135, in= 90] (sw);  
\end{tikzpicture}
=
\begin{tikzpicture}[baseline=10, xscale=1, yscale=1]
\coordinate (fsw) at (-1,0);
\coordinate (fne) at (2,1);
\coordinate (nw) at (0,8/9);
\coordinate (ne) at (1,1);
\coordinate (sw) at (0,0);
\coordinate (se) at (1,1/9);
\node[rectangle,inner sep=2pt,draw] (c) at (1/2,1/2) {\tiny$f$};
\draw[thick] (c.north west) to [out=135, in= 0] (nw);
\draw[blue,thick] (nw) to [out=180, in= 90] (fsw);
\draw[thick] (c.north east) to [out=45, in= -90] (ne);
\draw[thick] (c.south east) to [out=-45, in= 180] (se);
\draw[blue,thick] (se) to [out=0, in= -90] (fne);
\draw[thick] (c.south west) to [out=-135, in= 90] (sw);  
\end{tikzpicture} \,.
\ee

We call an $R$-matrix $r$ {\color{blue} rigid} if $\widetilde{r^{\pm1}}$
are invertible.

\subsection{State-sum invariants of knots}
\label{sub.RTsums}

We now have all the ingredients to define the state-sum invariants of normal
knot diagrams. Fix a rigid $R$-matrix $r$ over a finite dimensional vector
space $V$, equipped with a basis $B$. For a normal long knot diagram $K$, 
let $E_K$ and $C_K$ denote its sets of edges and crossings, respectively.

A {\color{blue} state} $s$ of $K$ is a map $s\colon E_K\to B$ that assigns an
element of $B$ to each edge of $K$. The {\color{blue} weight} $w_s(K)$ of
the state $s$ of $K$ is the product of local weights
\be
\label{wsK}
w_s(K)=\prod_{c\in C_K}w_s(c) \,,
\ee
where the local weights are defined by
\be
\label{wsKlocal}
\begin{tikzpicture}[yscale=.7,xscale=.4,baseline=7]
\draw[thick,<-] (0,1) to [out=-90,in=90] (1,0);
\draw[line width=3pt,white] (1,1) to [out=-90,in=90] (0,0);
\draw[thick,<-] (1,1) to [out=-90,in=90] (0,0);
\node (sw) at (0,-.2){\tiny $a$};\node (se) at (1,-.2){\tiny $b$};
\node (nw) at (0,1.2){\tiny $c$};\node (ne) at (1,1.2){\tiny $d$};
\end{tikzpicture},
\begin{tikzpicture}[yscale=.7,xscale=.4,baseline=7]
\draw[thick,->] (1,1) to [out=-90,in=90] (0,0);
\draw[line width=3pt,white] (0,1) to [out=-90,in=90] (1,0);
\draw[thick,<-] (0,1) to [out=-90,in=90] (1,0);
\node (sw) at (0,-.2){\tiny $c$};\node (se) at (1,-.2){\tiny $a$};
\node (nw) at (0,1.2){\tiny $d$};\node (ne) at (1,1.2){\tiny $b$};
\end{tikzpicture},
\begin{tikzpicture}[yscale=.7,xscale=.4,baseline=7]
\draw[thick,->] (0,1) to [out=-90,in=90] (1,0);
\draw[line width=3pt,white] (1,1) to [out=-90,in=90] (0,0);
\draw[thick,->] (1,1) to [out=-90,in=90] (0,0);
\node (sw) at (0,-.2){\tiny $d$};\node (se) at (1,-.2){\tiny $c$};
\node (nw) at (0,1.2){\tiny $b$};\node (ne) at (1,1.2){\tiny $a$};
\end{tikzpicture}
\xmapsto{w_s}
\langle c^*\otimes d^*,r(a\otimes b)\rangle,
\qquad
\begin{tikzpicture}[yscale=.7,xscale=.4,baseline=7]
\draw[thick,<-] (1,1) to [out=-90,in=90] (0,0);
\draw[line width=3pt,white] (0,1) to [out=-90,in=90] (1,0);
\draw[thick,->] (0,1) to [out=-90,in=90] (1,0);
\node (sw) at (0,-.2){\tiny $b$};\node (se) at (1,-.2){\tiny $d$};
\node (nw) at (0,1.2){\tiny $a$};\node (ne) at (1,1.2){\tiny $c$};
\end{tikzpicture}\
\xmapsto{w_s}
\big\langle a\otimes c^*,(\widetilde{r^{-1}})^{-1}(b\otimes d^*)\big\rangle
\ee
for positive crossings and likewise for negative crossings
\be
\label{wsKlocalminus}
\begin{tikzpicture}[yscale=.7,xscale=.4,baseline=7]
\draw[thick,<-] (1,1) to [out=-90,in=90] (0,0);
\draw[line width=3pt,white] (0,1) to [out=-90,in=90] (1,0);
\draw[thick,<-] (0,1) to [out=-90,in=90] (1,0);
\node (sw) at (0,-.2){\tiny $a$};\node (se) at (1,-.2){\tiny $b$};
\node (nw) at (0,1.2){\tiny $c$};\node (ne) at (1,1.2){\tiny $d$};
\end{tikzpicture},
\begin{tikzpicture}[yscale=.7,xscale=.4,baseline=7]
\draw[thick,<-] (0,1) to [out=-90,in=90] (1,0);
\draw[line width=3pt,white] (1,1) to [out=-90,in=90] (0,0);
\draw[thick,->] (1,1) to [out=-90,in=90] (0,0);
\node (sw) at (0,-.2){\tiny $c$};\node (se) at (1,-.2){\tiny $a$};
\node (nw) at (0,1.2){\tiny $d$};\node (ne) at (1,1.2){\tiny $b$};
\end{tikzpicture},
\begin{tikzpicture}[yscale=.7,xscale=.4,baseline=7]
\draw[thick,->] (1,1) to [out=-90,in=90] (0,0);
\draw[line width=3pt,white] (0,1) to [out=-90,in=90] (1,0);
\draw[thick,->] (0,1) to [out=-90,in=90] (1,0);
\node (sw) at (0,-.2){\tiny $d$};\node (se) at (1,-.2){\tiny $c$};
\node (nw) at (0,1.2){\tiny $b$};\node (ne) at (1,1.2){\tiny $a$};
\end{tikzpicture}
\xmapsto{w_s}
\langle c^*\otimes d^*,r^{-1}(a\otimes b)\rangle,
\qquad
\begin{tikzpicture}[yscale=.7,xscale=.4,baseline=7]
\draw[thick,->] (0,1) to [out=-90,in=90] (1,0);
\draw[line width=3pt,white] (1,1) to [out=-90,in=90] (0,0);
\draw[thick,<-] (1,1) to [out=-90,in=90] (0,0);
\node (sw) at (0,-.2){\tiny $b$};\node (se) at (1,-.2){\tiny $d$};
\node (nw) at (0,1.2){\tiny $a$};\node (ne) at (1,1.2){\tiny $c$};
\end{tikzpicture}\
\xmapsto{w_s}
\big\langle a\otimes c^*,(\widetilde{r})^{-1}(b\otimes d^*)\big\rangle \,.
\ee

These arrangements of the $R$-matrices at the crossings are the same as
in~\cite[Eqns.(16)-(19)]{Ka:longknots}.

The main theorem of this construction is the topological invariance of the
state-sum; see~\cite{Reshetikhin:quasi,RT:ribbon,Tu:book} and in the form stated
below,~\cite[Thm.1]{Ka:longknots}.

\begin{theorem}
Let a normal long knot diagram $K$ have an equal number of negative and positive
crossings. Then, the linear map
\be
\label{JKdef}
J_r(K)\colon V\to V,\quad  J_r(K)a
=\sum_{s\in B^{E_K},\ s_{\text{in}}=a}w_s(K) s_{\text{out}}
\ee
is independent of the basis of $V$ and it is thus an $End(V)$-valued
invariant of oriented knots.
\end{theorem}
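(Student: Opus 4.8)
The plan is to establish two separate facts: first, that $J_r(K)$ is well-defined independently of the chosen basis $B$ of $V$, and second, that it is invariant under ambient isotopy of the knot, i.e.\ under all Reidemeister-type moves connecting two normal long knot diagrams of the same knot (with equal numbers of positive and negative crossings).

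\textbf{Basis independence.} First I would rewrite the state-sum $\eqref{JKdef}$ in a manifestly basis-free form. The local weight $\langle c^*\otimes d^*, r(a\otimes b)\rangle$ is just the matrix coefficient of $r$, and summing the tensor $s\mapsto w_s(K)\,s_{\text{out}}$ over internal edge-states amounts to composing the structural morphisms $r^{\pm 1}\in\operatorname{Aut}(V\otimes V)$, $(\widetilde{r^{\pm1}})^{-1}\colon V\otimes V^*\to V^*\otimes V$ (read off from $\eqref{wsKlocal}$--$\eqref{wsKlocalminus}$), together with the evaluation $\ev\colon V^*\otimes V\to\bF$ and coevaluation $\coev\colon\bF\to V\otimes V^*$ coming from the cup/cap segments in $\eqref{4segments}$, along the planar graph underlying $K$. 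Each contraction $\sum_{c\in B}\,\cdot\otimes c\otimes\cdot\otimes c^*\otimes\cdot$ is exactly $\coev$, and $\sum_{c}\langle c^*,\cdot\rangle\langle\cdot,c\rangle$ is $\ev$; neither depends on $B$. Hence $J_r(K)$ is the composite along $K$ of basis-free morphisms, and is basis independent. (One should note here that rigidity of $r$ is precisely what makes $(\widetilde{r^{\pm1}})^{-1}$ exist, so all the building blocks are legitimate morphisms.)

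\textbf{Topological invariance.} Two normal long knot diagrams represent the same oriented knot iff they are related by a finite sequence of oriented Reidemeister moves R1, R2, R3, together with the planar isotopies that may temporarily leave the class of normal diagrams and the moves that create/cancel the normalization kinks of $\eqref{normalization}$; since we only compare diagrams with equal writhe, the framing-changing R1 moves occur in cancelling pairs. So I would check, move by move, that the corresponding composites of $r^{\pm1}$, $(\widetilde{r^{\pm1}})^{-1}$, $\ev$, $\coev$ agree. The R2 and ``zig-zag'' (snake) identities reduce to $r\cdot r^{-1}=\operatorname{id}$ and to the defining property of $\ev,\coev$ plus the definition of the partial transpose $\tilde f$ in $\eqref{ftrans}$ — the latter being exactly engineered so that sliding a strand past a crossing converts $r$ into $\widetilde{r}$ and its inverse. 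The R3 move is where the Yang--Baxter relation $\eqref{rYB}$, $r_1r_2r_1=r_2r_1r_2$, is used, together with its variants obtained by partial transposition (which hold automatically since $\widetilde{(\cdot)}$ is functorial with respect to these compositions). The oriented R1 moves, and the normalization replacements $\eqref{normalization}$ themselves, must be checked to be weight-preserving: this is where the hypothesis that $K$ has equal numbers of positive and negative crossings is needed, because a single curl multiplies $J_r$ by a fixed scalar (the ``quantum dimension''-type factor $\operatorname{tr}$ of a twist), and only the balanced condition guarantees these scalars cancel globally. This verification is essentially the content of \cite[Thm.\ 1]{Ka:longknots} and \cite{RT:ribbon, Reshetikhin:quasi, Tu:book}, so I would organize the proof as a reduction to those references, spelling out only the dictionary between the present diagrammatic conventions $\eqref{4segments}$--$\eqref{wsKlocalminus}$ and the structural morphisms.

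\textbf{Main obstacle.} The routine but delicate part — and the one I expect to be the real work — is the bookkeeping of orientations and of partial transposes: there are eight crossing types in $\eqref{8crossings}$ but essentially one $R$-matrix, and one has to verify that the assignments in $\eqref{wsKlocal}$--$\eqref{wsKlocalminus}$ are mutually consistent under bending strands, i.e.\ that $\widetilde{r^{-1}}$, $(\widetilde{r})^{-1}$, $(\widetilde{r^{-1}})^{-1}$ etc.\ fit together coherently so that each Reidemeister move in each orientation pattern becomes a true equality of morphisms. Once the correspondence with the morphisms of a (pivotal/ribbon-like) category generated by $r$ is set up cleanly, invariance follows from the cited theorems; establishing that correspondence unambiguously is the crux.
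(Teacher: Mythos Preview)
Your approach matches the paper's: the paper does not give a proof at all but simply cites \cite{Reshetikhin:quasi,RT:ribbon,Tu:book} and, for the precise form stated, \cite[Thm.~1]{Ka:longknots}. You do the same, while additionally sketching the reduction (basis-free reformulation via $\ev$/$\coev$, then Reidemeister moves), so in substance you and the paper agree.

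One small imprecision in your sketch: in the rigid $R$-matrix framework of \cite{Ka:longknots} there is no ribbon element, so a single R1 curl need not act by a \emph{scalar}; it contributes an endomorphism of $V$. The hypothesis of equal numbers of positive and negative crossings is used so that the contributions of positive and negative curls cancel as \emph{operators}, not merely as scalars. This does not affect the overall structure of your argument, but it is worth stating correctly since the invariant is genuinely $\End(V)$-valued rather than scalar-valued.
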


When the vector space $V$ is equipped with a basis $B$ (as in all of our examples
below), then the oriented knot invariant above is a matrix-valued invariant.

Note that this construction can be extended to the context of arbitrary
monoidal categories with duality.

The above invariant should not be confused with the universal invariants of knots
taking values in quotients of completed Hopf algebras that come from quantum groups,
considered by Lawrence~\cite{Lawrence}, Ohtsuki~\cite{Ohtsuki:colored} and
Habiro~\cite{Habiro:WRT}.
  

\section{From braided Hopf algebras with automorphisms to
  $R$-matrices}
\label{sec.hopf2R}

\subsection{From Hopf f-objects to left/right Yetter--Drinfel'd
f-objects}
\label{sub.H2DY}

In this section we discuss the left arrow of~\eqref{steps}.
We deliberately phrase our results in the language of braided monoidal
(not-necessarily abelian) categories to allow versatility of future applications.
A detailed discussion of the concepts of a monoidal category, braided monoidal
category, rigid monoidal category, category of functors, algebra and coalgebra
objects in monoidal categories, modules over algebra objects and comodules over
coalgebra objects and their morphisms can be found in the book by
Turaev--Virelizier~\cite[Sec.1.6]{Tu:Vi}.

All monoidal categories that we consider are assumed to be strict.
In writing compositions of morphisms $g: X \to Y$ and $f: Y \to Z$ in a
category, we will suppress the composition symbol, so that we write $fg$ instead of
$f\circ g$. Moreover, in the case of monoidal
categories, we assume the preference of the composition over the monoidal product,
so that, for example, $fg\otimes h$ will mean $(fg)\otimes h$.

When a functor $F\colon \mathcal{D}\to\mathcal{C}$ is considered as an object of the
functorial category $\mathcal{C}^{\mathcal{D}}$, it will be called
\emph{\color{blue}functorial object} or just  \emph{\color{blue}f-object} for brevity.

Let $\mathcal{C}$ be a braided monoidal category. Denote by $\mathcal{C}^{\Z}$ the
braided monoidal category of functors $F\colon \Z\to \mathcal{C}$ where the additive
group of integers $\Z$ is viewed as a category with one object $*$ whose automorphism
group is $\BZ$. We denote by
$\tau\colon \otimes\to \otimes^{\text{op}}$ the braiding of $\mathcal{C}^{\Z}$ which
assigns to any pair of f-objects $F$ and $G$ a functorial morphism
$\tau_{F,G}\colon F\otimes G\to G\otimes F$ that at the unique object $*$ of $\Z$
evaluates  to the morphism  of $\mathcal{C}$
$$(\tau_{\mathcal{C}})_{F(*),G(*)}\colon
F(*)\otimes G(*)\to G(*)\otimes F(*)$$ where $\tau_{\mathcal{C}}$
is the braiding in $\mathcal{C}$.

\begin{remark}
Given the fact that the group $\Z$ is freely generated by one element $1$, an
object~$G$ of the functor category $\mathcal{C}^{\Z}$ is uniquely determined by the
pair $(A,\phi)$ where $A$ is the object of $\mathcal{C}$ obtained as the image by $G$
of the unique object $*$ of $\Z$, and $\phi \colon A\to A$ is the automorphism of $A$
obtained as the image by $G$ of the generating element $1$ of $\Z$. With this
interpretation, a morphism from $(A,\phi)$ to $(B,\psi)$ is a morphism
$f\colon A\to B$ in $\mathcal{C}$ such that $\psi f=f\phi$. The monoidal product of
two pairs $(A,\phi)\otimes (B,\psi)$ is given by the pair
$(A\otimes B, \phi\otimes\psi)$.
\end{remark}

\begin{definition}
A  \emph{\color{blue} Hopf f-object} is an f-object $H\colon\Z\to\mathcal{C}$
together with functorial morphisms (natural transformations)
\begin{equation}
\label{eq:hopff}  
  \nabla\colon H\otimes H\to H,\quad \eta\colon\mathbb{I}\to H,\quad
  \Delta\colon H\to H\otimes H,\quad \epsilon\colon H\to \mathbb{I},
  \quad S\colon H\to H
\end{equation}
such that
$(H,\nabla,\eta)$ is an algebra f-object,
$(H,\Delta,\epsilon)$ is a coalgebra f-object and
\begin{equation}
(\nabla\otimes\nabla)
(\operatorname{id}_H\otimes \tau_{H,H}\otimes\operatorname{id}_H)(\Delta\otimes\Delta)
=\Delta\nabla,
\end{equation}
\begin{equation}
\nabla(S\otimes\operatorname{id}_H)\Delta=\eta\epsilon
=\nabla(\operatorname{id}_H\otimes S)\Delta.
\end{equation}
We will always assume that $S$ is an invertible (functorial) morphism. As in the
theory of Hopf algebras, the functorial morphisms $\nabla$, $\eta$, $\Delta$,
$\epsilon$ and $S$ are respectively called product, unit, coproduct, counit and
antipode.
\end{definition}

\begin{definition}
\label{def:yet-drin}
Let  $H\colon \Z\to\mathcal{C}$ be a Hopf f-object. A \emph{\color{blue} left
  Yetter--Drinfel'd f-object} over $H$ is a triple $(Y,\lambda_L,\delta_L)$ where
$Y\colon \Z\to\mathcal{C}$ is an f-object of $\mathcal{C}^\Z$, and
$\lambda_L\colon H\otimes Y\to Y$, $\delta_L\colon Y\to H\otimes Y$ are morphisms of
$\mathcal{C}^\Z$ such that $(Y,\lambda_L)$ is a left $H$-module f-object,
$(Y,\delta_L)$ is a left $H$-comodule f-object, and 
\be
\label{eq:DYL}
\begin{aligned}
(\nabla\otimes\operatorname{id}_Y)(\operatorname{id}_H\otimes\tau_{Y,H})
(\delta_L\lambda_L\otimes \phi_H)(\operatorname{id}_H\otimes\tau_{H,Y})
(\Delta\otimes\operatorname{id}_Y) & \\
& \hspace{-5cm}
=(\nabla\otimes\lambda_L)
(\operatorname{id}_H\otimes \tau_{H,H}\otimes\operatorname{id}_Y)(\Delta\otimes\delta_L)
\end{aligned}
\ee
where $\phi_H\colon H\to H$ is the functorial isomorphism that at the unique object
$*$ of $\Z$ evaluates as
$$
(\phi_H)_*=H(1)\colon H(*)\to H(*).
$$
\end{definition}

Taking into account the self-dual nature of Hopf objects, it is useful to have
the dual version of Definition~\ref{def:yet-drin} which reads as follows.

\begin{definition}
A \emph{\color{blue} right Yetter--Drinfel'd f-object} over a Hopf f-object
$H\colon \Z\to\mathcal{C}$ is a triple $(Y,\lambda_R,\delta_R)$ where
$Y\colon \Z\to\mathcal{C}$ is an f-object of $\mathcal{C}^\Z$, and
$\lambda_R\colon Y\otimes H\to Y$, $\delta_R\colon Y\to Y\otimes H$ are functorial
morphisms of $\mathcal{C}^\Z$ such that $(Y,\lambda_R)$ is a right $H$-module f-object,
$(Y,\delta_R)$ is a right $H$-comodule f-object, and 
\be
\label{eq:DYR}
\begin{aligned}
(\operatorname{id}_Y\otimes\nabla)(\tau_{H,Y}\otimes\operatorname{id}_H)
(\phi_H\otimes \delta_R\lambda_R)(\tau_{Y,H}\otimes\operatorname{id}_H)
(\operatorname{id}_Y\otimes\Delta) & \\
& \hspace{-5cm}
=(\lambda_R\otimes\nabla)
(\operatorname{id}_Y\otimes \tau_{H,H}\otimes\operatorname{id}_H)
(\delta_R\otimes\Delta) \,.
\end{aligned}
\ee
\end{definition}
We will return and give further clarifications to these definitions later in
Subsection~\ref{sub.DYdiag} after introducing the graphical notation of string
diagrams.

For a Hopf f-object $H\colon\Z\to\mathcal{C}$, we denote by $\Delta^{(2)}$
and $\nabla^{(2)}$ the twice iterated coproduct and product, respectively, defined
by

\be
\label{DD2}
\begin{aligned}
\nabla^{(2)} : H \otimes H \otimes H \to H, &
\qquad\qquad \nabla^{(2)}=\nabla (\nabla \otimes \operatorname{id}_H)
\\
\Delta^{(2)} : H \to H \otimes H \otimes H, &
\qquad\qquad \Delta^{(2)}= (\Delta \otimes \operatorname{id}_H) \Delta \,.
\end{aligned}
\ee

The following theorem provides constructions of  left/right Yetter--Drinfel'd
f-objects over a Hopf f-object $H\colon\Z\to\mathcal{C}$.

\begin{theorem}
\label{thm:1}
For any Hopf f-object $H\colon\Z\to\mathcal{C}$, 
\newline
\rm{(a)} the triple $(H,\nabla,\delta_L)$ is a left Yetter--Drinfel'd f-object
over $H$, where
\begin{equation}
\label{eq:coaction}
\delta_L :=(\nabla\otimes\operatorname{id}_H)(\operatorname{id}_H\otimes \tau_{H,H})
(\operatorname{id}_{H\otimes H}\otimes S\phi_H)\Delta^{(2)};
\end{equation}
\newline
\rm{(b)} the triple $(H,\lambda_R,\Delta)$ is a right Yetter--Drinfel'd f-object
over $H$, where
\begin{equation}
\label{eq:action}
\lambda_R :=\nabla^{(2)}(S\phi_H\otimes\operatorname{id}_{H\otimes H} )
(\tau_{H,H}\otimes \operatorname{id}_H)(\operatorname{id}_H\otimes\Delta) \,.
\end{equation}
\end{theorem}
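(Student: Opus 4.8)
The plan is to verify the two Yetter--Drinfel'd axioms by direct computation with string diagrams, exploiting the self-duality of Hopf f-objects so that part (b) follows from part (a) by reversing diagrams (turning the page upside down, which exchanges $\nabla\leftrightarrow\Delta$, $\eta\leftrightarrow\epsilon$, and reverses the order of composition), while $\phi_H$ is sent to $\phi_H^{-1}$ and the defining identity \eqref{eq:DYR} for right YD f-objects is literally the vertical mirror of \eqref{eq:DYL}. So I would concentrate on (a) and then note (b) is formally dual. First, for (a), I must check three things: that $(H,\nabla)$ is a left $H$-module f-object, that $(H,\delta_L)$ with $\delta_L$ as in \eqref{eq:coaction} is a left $H$-comodule f-object, and that the compatibility \eqref{eq:DYL} holds. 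The first is just associativity of $\nabla$ together with the fact that $\nabla$ is a morphism in $\mathcal{C}^{\Z}$, i.e.\ commutes with $\phi_H\otimes\phi_H\to\phi_H$; this is immediate from the algebra f-object axioms. The functoriality (naturality in the $\Z$-variable) of $\delta_L$ and $\lambda_R$ needs a separate short check: since $\phi_H=H(1)$ is itself a Hopf f-object automorphism, it commutes with $\nabla$, $\Delta$, $S$ and $\tau$, so the composite defining $\delta_L$ intertwines $\phi_H$ with $\phi_H\otimes\phi_H$ as required.

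The coassociativity/counitality of $\delta_L$ is the first genuine computation. Here I would draw $\delta_L$ as: comultiply three times, apply $S\phi_H$ to the last tensorand, braid it past the middle one, and multiply the first two. To check $(\operatorname{id}_H\otimes\delta_L)\delta_L=(\Delta\otimes\operatorname{id}_H)\delta_L$ I would expand both sides using coassociativity of $\Delta$, the hexagon/naturality relations for $\tau$, the fact that $S\phi_H$ is an algebra/coalgebra (anti)morphism composed appropriately, and the antipode axiom $\nabla(S\otimes\operatorname{id})\Delta=\eta\epsilon$ to collapse the extra $S\phi_H$-legs. The counit axiom $(\epsilon\otimes\operatorname{id}_H)\delta_L=\operatorname{id}_H$ uses $\epsilon S=\epsilon$, $\epsilon\phi_H=\epsilon$, and counitality of $\Delta$. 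This is the standard verification that the left coadjoint-type coaction on a Hopf algebra is a comodule structure, twisted by $\phi_H$; it is bookkeeping but must be done carefully because of the braiding crossings.

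The main obstacle is the compatibility condition \eqref{eq:DYL} itself. Both sides are long diagrams; after substituting $\lambda_L=\nabla$ and $\delta_L$ from \eqref{eq:coaction}, the left-hand side becomes a tangle with several copies of $\Delta$, two copies of $\nabla$, one $S\phi_H$, one $\phi_H$, and a handful of braidings, while the right-hand side is comparatively short. The strategy is to massage the left-hand side: use naturality of $\tau$ to slide the $\phi_H$ and $S\phi_H$ strands around, apply the bialgebra axiom $\Delta\nabla=(\nabla\otimes\nabla)(\operatorname{id}\otimes\tau\otimes\operatorname{id})(\Delta\otimes\Delta)$ to split or merge product/coproduct vertices, and repeatedly invoke the antipode identities to create and cancel $\eta\epsilon$ pairs, reducing both sides to a common normal form. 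I expect the computation to mirror the classical proof (e.g.\ in Montgomery or Radford) that $H$ with the adjoint action and coadjoint coaction is a Yetter--Drinfel'd module over itself, with the single modification that the automorphism $\phi_H$ is inserted in the coaction — since $\phi_H$ is central among the structure morphisms up to naturality, its presence does not obstruct any of the moves, but it does mean one must track exactly where it lands, and \eqref{eq:DYL} is engineered with $\phi_H$ in precisely the spot that makes the cancellations go through. Once (a) is established, (b) is obtained by the duality argument above, noting that under the mirror the coaction formula \eqref{eq:coaction} turns into the action formula \eqref{eq:action} with $\phi_H$ replaced by its inverse, and the right YD axiom \eqref{eq:DYR} is the mirror of \eqref{eq:DYL}, so no new computation is needed.
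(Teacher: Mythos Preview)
Your plan is correct and matches the paper's own argument: it too verifies the comodule law and the compatibility \eqref{eq:DYL} for $\delta_L$ by a direct string-diagram computation (using coassociativity, the bialgebra compatibility, naturality of $\tau$, and the antipode identity to produce an $\eta\epsilon$ cancellation), and then declares part (b) to follow by the $180^\circ$-rotation duality of the diagrams. One small slip to fix: under that rotation $\phi_H$ stays $\phi_H$, not $\phi_H^{-1}$. Indeed both \eqref{eq:coaction} and \eqref{eq:action} carry $S\phi_H$ with the same sign, and \eqref{eq:DYR} is literally the rotated form of \eqref{eq:DYL} with the same $\phi_H$; the rotation relabels $\nabla\leftrightarrow\Delta$, $\eta\leftrightarrow\epsilon$, swaps tensor order, and leaves the endomorphisms $S$, $\phi_H$, $\tau_{H,H}$ untouched, so no inversion is needed and your duality argument goes through verbatim once you drop that claim.
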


\subsection{From left/right Yetter--Drinfel'd f-objects to $R$-matrices}
\label{sub.DY2R}

The next theorem constructs $R$-matrices from left/right Yetter--Drinfel'd
f-objects corresponding to the second arrow in~\eqref{steps}. 

\begin{theorem}
\label{thm:2}
Let  $H\colon \Z\to\mathcal{C}$ be a Hopf f-object and $(Y,\lambda,\delta)$ be
a left, respectively a right, Yetter--Drinfel'd f-object over $H$. Then
\begin{equation}
\label{eq:r-matrix}
\rho_L=(\lambda\otimes\operatorname{id}_Y)(\operatorname{id}_H\otimes \tau_{Y,Y})
(\delta\otimes \phi_Y),
\end{equation}
respectively 
\begin{equation}
\label{eq:r-matrix-r}
\rho_R=(\phi_Y\otimes \lambda)(\tau_{Y,Y}\otimes \operatorname{id}_H)
(\operatorname{id}_Y\otimes\delta),
\end{equation}
is an $R$-matrix, that is a solution of  the following braid group type Yang--Baxter
relation in the automorphism group $\operatorname{Aut}(Y\otimes Y\otimes Y)$:
\begin{equation}
\label{tauYB}  
(\rho\otimes \operatorname{id}_Y)(\operatorname{id}_Y\otimes\rho)
(\rho\otimes \operatorname{id}_Y)=(\operatorname{id}_Y\otimes\rho)
(\rho\otimes\operatorname{id}_Y)(\operatorname{id}_Y\otimes\rho).
\end{equation}
Moreover, this $R$-matrix is rigid if the f-object $Y$ is rigid.
\end{theorem}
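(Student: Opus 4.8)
\textbf{Proof proposal for Theorem~\ref{thm:2}.}
The plan is to verify the Yang--Baxter relation~\eqref{tauYB} by a direct string-diagram computation, carried out entirely inside the braided monoidal category $\mathcal{C}^{\Z}$, and then to handle rigidity separately. I would treat the left case $\rho_L$ in detail; the right case $\rho_R$ follows by the evident duality (reversing diagrams top-to-bottom / left-to-right, and swapping the roles of $\nabla,\eta$ with $\Delta,\epsilon$), exactly as Definition~\ref{def:yet-drin} and its dual are mirror images. The key observation is that $\rho_L$ as defined in~\eqref{eq:r-matrix} is, up to the twist by $\phi_Y$ in the last slot, the standard braiding on the category of left Yetter--Drinfel'd modules: it is built from the coaction $\delta$, the ambient braiding $\tau_{Y,Y}$, and the action $\lambda$. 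So the statement is morally the classical fact that the category of (appropriately generalized) Yetter--Drinfel'd modules over a Hopf object is braided, with the generalized YD compatibility~\eqref{eq:DYL} being precisely the condition that makes $\rho_L$ satisfy the braid relation even in the presence of the automorphism $\phi_H$.

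The main steps I would carry out are as follows. First, draw $\rho_L$ as a string diagram: a strand for each copy of $Y$, with $\delta$ opening a new $H$-strand from the first $Y$, that $H$-strand crossing over the second $Y$ via $\tau$, then acting on it via $\lambda$, while the third factor is pre-composed with $\phi_Y$. Second, compose three copies appropriately to form both sides of~\eqref{tauYB}, obtaining two large diagrams on three $Y$-strands. Third, push all the $H$-strands around using: (i) naturality of the braiding $\tau$ in $\mathcal{C}^{\Z}$ (which is where the automorphisms $\phi_H,\phi_Y$ get shuffled past morphisms — one must track them carefully, since in $\mathcal{C}^{\Z}$ a morphism $f\colon(A,\phi)\to(B,\psi)$ only commutes with the automorphisms up to $\psi f = f\phi$); (ii) coassociativity/associativity and the module/comodule axioms for $(Y,\lambda)$ and $(Y,\delta)$; (iii) the compatibility of $\Delta$ with $\nabla$ and the bialgebra axioms for $H$; and crucially (iv) the generalized YD relation~\eqref{eq:DYL} itself, applied to collapse the configuration where an $H$-strand coming from $\delta$ has both acted (via $\lambda$) and then re-coacted. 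The coherent bookkeeping of the $\phi$-twists is exactly why~\eqref{eq:DYL} carries the factor $\phi_H$ and why~\eqref{eq:r-matrix} carries $\phi_Y$: these are the minimal insertions that make the two sides of~\eqref{tauYB} diagrammatically equal. I would also use, in passing, the hexagon axioms for $\tau_\mathcal{C}$ (equivalently, that $\tau$ is a braiding, so $\tau_{A\otimes B,C}$ and $\tau_{A,B\otimes C}$ factor), and the antipode axioms only insofar as they are hidden inside $\delta_L$ in examples — but for the abstract Theorem~\ref{thm:2} the YD object $(Y,\lambda,\delta)$ is given, so only its defining axioms~\eqref{eq:DYL} are needed, not the construction~\eqref{eq:coaction}.

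For the invertibility statement: $\rho_L$ is a composite of $\lambda\otimes\operatorname{id}$, $\operatorname{id}\otimes\tau_{Y,Y}$, and $\delta\otimes\phi_Y$, so it is an isomorphism as soon as each factor is. The middle and last factors are automatically invertible ($\tau$ is a braiding, $\phi_Y$ is an automorphism by definition of an f-object). The first factor $\lambda\otimes\operatorname{id}_Y$ is in general not invertible, so $\rho_L$ need not be an automorphism for an arbitrary YD f-object — this is the role of the hypothesis that $Y$ be \emph{rigid}. Here I would invoke the meaning of ``rigid'' for $Y$ (as in Subsection~\ref{sub.normalR}, the partial transpose of the relevant structure map is invertible): rigidity of $Y$ lets one build an explicit two-sided inverse of $\rho_L$ by sliding the $H$-strand back using the duality morphisms $\operatorname{ev},\operatorname{coev}$ of $Y$, effectively writing $\rho_L^{-1}$ as a state-sum with the ``transposed'' action. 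Concretely, $\rho_L^{-1}=(\delta^{-1}\otimes\phi_Y^{-1})(\operatorname{id}_H\otimes\tau_{Y,Y}^{-1})(\lambda^{-1}\otimes\operatorname{id}_Y)$ is not literally available since $\delta,\lambda$ are not invertible, so instead one checks that rigidity of $Y$ supplies exactly the partial-transpose invertibility needed to run the argument of~\cite{Ka:longknots} and conclude $\rho_L\in\operatorname{Aut}(Y\otimes Y)$.

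\textbf{Anticipated main obstacle.} The genuinely delicate point is the diagrammatic manipulation in the presence of the automorphisms $\phi_H$ and $\phi_Y$: in an ordinary braided category one would cite ``Yetter--Drinfel'd modules form a braided category'' and be done, but here every time a strand is dragged past a structural morphism one picks up (or must absorb) a twist, and the braid relation~\eqref{tauYB} only closes up if all these twists cancel globally. Verifying that cancellation — i.e.\ that the single insertion of $\phi_H$ in~\eqref{eq:DYL} and of $\phi_Y$ in~\eqref{eq:r-matrix} is both necessary and sufficient — is the heart of the proof, and is best done by a careful side-by-side reduction of the two three-strand diagrams to a common normal form. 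The rigidity half is comparatively routine once the correct notion of rigid f-object is unwound, reducing to the invertibility criterion already established in Section~\ref{sec.RT}.
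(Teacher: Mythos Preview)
Your plan for the Yang--Baxter relation is essentially the paper's own proof: a direct string-diagram reduction that shuffles the $\phi$-twists through using functoriality in $\mathcal{C}^{\Z}$, combines repeated actions/coactions via the module and comodule axioms, and then identifies the residual equation as exactly the generalized YD compatibility~\eqref{eq:DYL}. The paper carries this out as a single chain of graphical equivalences terminating in~\eqref{greq:comp-blue}, and handles the right case by the $180^\circ$ rotation you describe. Your remark that only the YD axioms (not the bialgebra compatibility of $H$) are needed for the abstract theorem is correct; step~(iii) in your list is superfluous.

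There is, however, a genuine confusion in your invertibility paragraph. You claim that $\rho_L$ need not lie in $\operatorname{Aut}(Y\otimes Y)$ unless $Y$ is rigid, and that rigidity of $Y$ is what supplies the inverse. This is wrong on both counts. Invertibility of $\rho_L$ follows from the invertible antipode $S$ of $H$ (which the paper explicitly assumes), exactly as in the classical Yetter--Drinfel'd story: an explicit inverse is built from $\delta$, $\tau^{-1}$, $\phi_Y^{-1}$, and an action twisted by $S^{-1}$. No duals on $Y$ are required for this. Rigidity of $Y$ as an object (existence of $\operatorname{ev},\operatorname{coev}$) is a separate hypothesis whose role is to make the \emph{partial transposes} $\widetilde{\rho_L^{\pm1}}$ invertible, i.e.\ to make $\rho_L$ a rigid $R$-matrix in the sense of Section~\ref{sub.normalR}; it has nothing to do with $\rho_L$ being an automorphism in the first place. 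You have conflated ``$\rho_L$ is invertible'' with ``$\rho_L$ is rigid,'' and assigned the wrong mechanism to each. (The paper's own proof, it should be said, does not spell out either invertibility or the rigidity claim; it treats only the braid relation and leaves these as implicit.)
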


The proof of these theorems is given in the next section, using a diagrammatic
calculus.

A corollary of Theorem~\eqref{thm:2}  gives an invariant of knots.

\begin{theorem}
\label{thm:3}
Fix a rigid  left or a right Yetter--Drinfel'd f-object $Y$ over a
Hopf f-object $H$ with corresponding $R$-matrix $\rho$. Then, there exists a knot
invariant
\be
\label{WYKdef}
\{ \text{Knots in $S^3$} \} \to \mathrm{End}(Y), \qquad
K \mapsto J_\rho(K) \,. 
\ee
\end{theorem}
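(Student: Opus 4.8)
The plan is to assemble Theorem~\ref{thm:3} from the pieces that are already in place. By Theorem~\ref{thm:2}, a rigid left (resp. right) Yetter--Drinfel'd f-object $Y$ over $H$ produces an $R$-matrix $\rho$ (namely $\rho_L$ or $\rho_R$) that is rigid, i.e. $\widetilde{\rho^{\pm 1}}$ are invertible automorphisms. By the main theorem of Section~\ref{sec.RT} (the invariance statement for the state-sum $J_r$), such a rigid $R$-matrix assigns to every normal long knot diagram $K$ with an equal number of positive and negative crossings an element $J_\rho(K) \in \operatorname{End}(Y)$ that is independent of the chosen basis and is a regular-isotopy invariant. So the only real work is to pass from ``invariant of balanced normal long knot diagrams'' to ``invariant of knots in $S^3$'', and to check that the construction makes sense for an abstract f-object $Y$ rather than a bare finite-dimensional vector space.

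First I would reduce to the vector space setting: evaluating the f-object $Y \colon \Z \to \mathcal{C}$ at the unique object $*$ gives an object $V := Y(*)$ of $\mathcal{C}$ together with the automorphism $\phi_Y$, and when $\mathcal{C}$ is the category of finite-dimensional vector spaces (the case of all the examples in the paper) this is exactly the data feeding Section~\ref{sec.RT}; rigidity of $Y$ translates to rigidity of $\rho$ as an $R$-matrix, which is what Theorem~\ref{thm:2} already gives. In the general monoidal-with-duality setting the same state-sum is replaced by the corresponding Reshetikhin--Turaev evaluation, as remarked after the invariance theorem, so I would phrase the argument so that it specializes correctly. Next I would recall that every knot $\kappa$ in $S^3$ is the closure of a long knot diagram, and that by a small isotopy one may take that diagram to be normal in the sense of~\eqref{normalization}; the normalization replaces left-to-right extrema by the building blocks of~\eqref{4segments} without changing the knot type.

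The key step is the passage from long knot diagrams to knots, i.e. from regular isotopy to ambient isotopy. The state-sum $J_\rho$ is manifestly invariant under the Reidemeister II and III moves (these are built into the Yang--Baxter relation~\eqref{tauYB} and the rigidity conditions), but not a priori under Reidemeister I, so $J_\rho(K)$ depends on the writhe/number of curls of the chosen diagram. The standard fix is to require diagrams with an equal number of positive and negative crossings, which pins down the framing: any two balanced normal long knot diagrams representing the same knot differ by a sequence of Reidemeister~II, III moves together with pairs of cancelling curls (one positive, one negative), and one checks that adding such a cancelling pair multiplies $J_\rho$ by $\mu^{+1}\mu^{-1} = \operatorname{id}$, where $\mu \in \operatorname{Aut}(V)$ is the twist endomorphism coming from a single curl. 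This is precisely the content of~\cite[Thm.~1]{Ka:longknots}, stated in the excerpt as the invariance theorem of Section~\ref{sub.RTsums}, and I would invoke it directly rather than reprove it. I then set $J_\rho(\kappa) := J_\rho(K)$ for any balanced normal long knot diagram $K$ presenting $\kappa$; independence of the choice of $K$ is exactly the cited invariance, and independence of the basis of $V$ is the last clause of that same theorem.

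The main obstacle I anticipate is not a deep one but a bookkeeping one: making sure that the ``balanced'' hypothesis can always be achieved and does not secretly depend on choices. Every knot has diagrams with equal numbers of positive and negative crossings (add a cancelling pair of curls if the writhe is off), and the invariance theorem already absorbs the resulting ambiguity, so there is nothing genuinely new to prove; the risk is only in stating the reduction cleanly. The one place where a short argument is really needed is to confirm that rigidity of the f-object $Y$ (as defined for f-objects) is exactly the hypothesis ``$\widetilde{\rho^{\pm 1}}$ invertible'' required by Section~\ref{sec.RT} --- but this is the ``Moreover'' clause of Theorem~\ref{thm:2}, so it too is available. Hence the proof is essentially: combine Theorem~\ref{thm:2} with the state-sum invariance theorem of Section~\ref{sub.RTsums}, after the routine observation that knots in $S^3$ are closures of balanced normal long knot diagrams.
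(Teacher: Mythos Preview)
Your proposal is correct and matches the paper's approach: the paper presents Theorem~\ref{thm:3} simply as a corollary of Theorem~\ref{thm:2} together with the state-sum invariance theorem of Section~\ref{sub.RTsums} (citing~\cite[Thm.~1]{Ka:longknots}), and gives no separate proof. Your write-up is in fact more explicit than the paper about the balancing step (adding cancelling curls to equalize positive and negative crossings) and the passage from f-objects to vector spaces, but the underlying argument is identical.
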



\section{Proofs}
\label{sec.proofs}

\subsection{Diagrammatics of braided Hopf algebras with automorphisms}
\label{sub.hopfdiag}

The Hopf f-objects introduced in Section~\ref{sec.hopf2R} are categorical versions
of pairs $(H,\phi)$ where $H$ is a braided Hopf algebra and $\phi$ is an automorphism
of $H$. At around the same time of the Reshetikhin--Turaev construction of knot
invariants via diagrammatics, there was a parallel intense activity in the theory
of Hopf algebras motivated in part by the theory of quantum groups developed by
Drinfel'd~\cite{Drinfeld} and Jimbo~\cite{Jimbo}. There is a string diagrammatic
calculus designed to prove tensor identities in Hopf algebras that avoids using
explicit coordinate formulas for the tensors involved.

This string diagrammatic calculus extends to the case of braided Hopf algebras,
introduced by Majid around 1990~\cite{Majid:braided, Majid}, and used extensively by
many authors including Radford, Kuperberg and
Kauffman~\cite{Radford, Kuperberg, KR:hopf}. A survey of the various
directions of braided Hopf algebras around 2000 is given by Takeuchi in~\cite{Takeuchi}.

The string diagrammatics of the generators and relations of a Hopf algebra are
given in~\cite{Majid}. For a recent treatement, see~\cite{Ka:hopf}, namely,
Eqns.~(1.81)-(1.85) for the generators, Eqns.~(1.86)-(1.91) for the relations and
Eqns.~(1.68)-(1.73) for the diagrammatic notation. For the convenience of the reader,
we recall below the definitions of these morphisms, relations, and the string
diagrammatic notation. 

Let $\mathcal{C}$ be a category. To any morphism $f\colon X\to Y$ in $\mathcal{C}$,
we associate a graphical picture
\begin{equation}
f\ =:\ \begin{tikzpicture}[baseline=-2pt]
\node (x) [hvector] { $f$};
\node (j)  [] at (0,-23pt){$X$};
\node (k) [] at (0,23pt){ $Y$};
\draw[thick] (x)-- (j)(x)--(k);
\end{tikzpicture}.
\end{equation}
If $f\colon X\to Y$ and $g\colon Y\to Z$ are two composable morphisms, then their
composition is described by the vertical concatenation of graphs
\begin{equation}
g\circ f\ =\ 
\begin{tikzpicture}[baseline=-2pt,yscale=1.35]
\node (x) [hvector] {$g\circ f$};
\node (j) [] at (0,28pt){ $Z$};
\node (k) [] at (0,-29pt){ $X$};
\draw[thick] (x)--(j)(x)--(k);
\end{tikzpicture}
\ =\ 
\begin{tikzpicture}[baseline=-18pt]
\node (x) [hvector] {$g$};
\node (j) [] at (0,22pt){ $Z$};
\node (y) [hvector] at (0,-30pt){$f$};
\node (k) [] at (0,-55pt){$X$};
\draw[thick]  (x) to node[auto,inner sep=1pt]{} (j);
\draw[thick]  (x)-- (y);
\draw[thick]  (y) to node[auto,inner sep=1pt]{} (k);
\node[thick]  at (0.3, -15pt) {$Y$};
\end{tikzpicture}
\end{equation}
In particular, for the identity morphism $\operatorname{id}_X$ it is natural to
use just a line
\begin{equation}
\operatorname{id}_X\ =\ 
\begin{tikzpicture}[baseline=-2pt]
\node (x) [hvector] { $\operatorname{id}_X$};
\node (j) [] at (0,-30pt){ $X$};
\node (k) [] at (0,30pt){ $X$};
\draw[thick]  (x) to node[auto,inner sep=1pt]{} (j);
\draw[thick]  (x) to node[auto,inner sep=1pt]{} (k);
\end{tikzpicture}
\ =\ 
\begin{tikzpicture}[baseline=-2pt]
\node (x) at (0,30pt){ $X$};
\node (j) at (0,-30pt){$X$};
\draw[thick]  (x) -- (j);
\end{tikzpicture}.
\end{equation}

The string diagrams are especially useful in the case when $\mathcal{C}$ is a strict
monoidal category, because the tensor (monoidal) product can be drawn by the
horizontal juxtaposition. Namely, for two morphisms $f\colon X\to Y$ and
$g\colon U\to V$, their tensor  product $f\otimes g\colon X\otimes U\to Y\otimes V$
is drawn as follows:
\begin{equation}
f\otimes g\ =\ 
\begin{tikzpicture}[baseline=-2pt]
\node (x) [hvector] {  $f\otimes g$};
\node (j) [] at (0,23pt){  $Y\otimes V$};
\node (k) [] at (0,-23pt){  $X\otimes U$};
\draw[thick]  (x) -- (j) (x) -- (k);
\end{tikzpicture}
\ =\ 
\begin{tikzpicture}[baseline=-2pt]
\node (x) [hvector] {  $f\otimes g$};
\path  (x.east)+(0,23pt) node (j) {  $V$};
\path  (x.west)+(0,23pt) node (j1) {  $Y$};
\path (x.east) +(0,-23pt) node (k) {  $U$};
\path (x.west) +(0,-23pt) node  (k1) {  $X$};
\draw[thick]  (x.north east) --(j);
\draw[thick]  (x.south east) -- (k);
\draw[thick]  (x.north west) -- (j1);
\draw[thick]  (x.south west) --(k1);
\end{tikzpicture}
\ =\ 
\begin{tikzpicture}[baseline=-2pt]
\node (x) [hvector] {  $g$};
\path (x)+(0,23pt) node (j) {  $V$};
\path (x)+(0,-23pt) node (k) {  $U$};
\path (x)+(-16pt,0) node (y) [hvector]{  $f$};
\path (y)+(0,23pt) node (j1) {  $Y$};
\path (y)+(0,-23pt) node (k1) {  $X$};
\draw[thick]  (x)-- (j);
\draw[thick]  (x) -- (k);
\draw[thick]  (y) --(j1);
\draw[thick]  (y)--(k1);
\end{tikzpicture}.
\end{equation}
For example, the graphical equalities
\begin{equation}
\begin{tikzpicture}[baseline=-5pt,yscale=1.45]
\node (x) [hvector] {  $g$};
\path (x)+(0,23pt) node (j) {  $V$};
\path (x)+(0,-23pt) node (k) {  $U$};
\path (x)+(-16pt,0) node (y) [hvector]{  $f$};
\path (y)+(0,23pt) node (j1) {  $Y$};
\path (y)+(0,-23pt) node (k1) {  $X$};
\draw[thick]  (x)-- (j);
\draw[thick]  (x) -- (k);
\draw[thick]  (y) --(j1);
\draw[thick]  (y)--(k1);
\end{tikzpicture}
=
\begin{tikzpicture}[baseline=5]
\node (x) [hvector] {  $g$};
\path (x)+(0,43pt) node (j) {  $V$};
\path (x)+(0,-23pt) node (k) {  $U$};
\path (x)+(-16pt,20pt) node (y) [hvector]{  $f$};
\path (y)+(0,23pt) node (j1) {  $Y$};
\path (y)+(0,-43pt) node (k1) {  $X$};
\draw[thick]  (x)-- (j);
\draw[thick]  (x) -- (k);
\draw[thick]  (y) --(j1);
\draw[thick]  (y)--(k1);
\end{tikzpicture}
=
\begin{tikzpicture}[baseline=-15]
\node (x) [hvector] {  $g$};
\path (x)+(0,23pt) node (j) {  $V$};
\path (x)+(0,-43pt) node (k) {  $U$};
\path (x)+(-16pt,-20pt) node (y) [hvector]{  $f$};
\path (y)+(0,43pt) node (j1) {  $Y$};
\path (y)+(0,-23pt) node (k1) {  $X$};
\draw[thick]  (x)-- (j);
\draw[thick]  (x) -- (k);
\draw[thick]  (y) --(j1);
\draw[thick]  (y)--(k1);
\end{tikzpicture}
\end{equation}
correspond to the well known  relations in the tensor calculus
\begin{equation}
f\otimes g=(f\otimes\operatorname{id}_V)(\operatorname{id}_X\otimes g)
=(\operatorname{id}_Y\otimes g)(f\otimes\operatorname{id}_U).
\end{equation}

By taking into account the distinguished role of the identity object  $I$, it is
natural to associate to it the empty graph. 

Let $\mathcal{C}$ be a symmetric monoidal category with tensor product $\otimes$,
the opposite tensor product $\otimes^{\text{op}}$, unit object $I$ and symmetry
$\sigma\colon\otimes\to \otimes^{\text{op}}$.  Recall that a Hopf object in
$\mathcal{C}$  is an object $H$ endowed with five structural morphisms
\be
\label{eq:hopfmaps}  
\nabla\colon H\otimes H\to H,\quad \eta\colon I \to H,\quad
\Delta\colon H\to H\otimes H,\quad \epsilon\colon H\to I,
\quad S\colon H\to H
\ee
called, respectively, product, unit, coproduct, counit and antipode,
that satisfy the following relations or axioms
\begin{subequations}
\begin{align}
\label{Hr1}
\text{associativity :} & \hspace{0.2cm}
\nabla(\nabla \otimes \operatorname{id}_H) = \nabla(\operatorname{id}_H \otimes \nabla)
\\
\label{Hr2}
\text{coassociativity :} & \hspace{0.2cm}
(\Delta \otimes \operatorname{id}_H)\Delta = (\operatorname{id}_H \otimes \Delta)\Delta
\\
\label{Hr3}
\text{unitality :} & \hspace{0.2cm}
\nabla(\eta \otimes \operatorname{id}_H) = \operatorname{id}_H =
\nabla(\operatorname{id}_H \otimes \eta)
\\
\label{Hr4}
\text{counitality :} & \hspace{0.2cm}
(\epsilon \otimes \operatorname{id}_H)\Delta = \operatorname{id}_H =
(\operatorname{id}_H \otimes \epsilon)\Delta
\\
\label{Hr5}
\text{invertibility :} & \hspace{0.2cm}
\nabla(\operatorname{id}_H \otimes S) \Delta = \eta \epsilon =
\nabla(S \otimes \operatorname{id}_H) \Delta
\\
\label{Hr6}
\text{compatibility :} & \hspace{0.2cm}
(\nabla \otimes \nabla)(\operatorname{id}_H \otimes \sigma_{H,H} \otimes
\operatorname{id}_H)(\Delta \otimes \Delta) = \Delta \nabla.
\end{align}
\end{subequations}

Let us introduce the following graphical notation for the structural maps
of $H$ (all lines correspond to the object $H$)
\begin{equation}
\nabla\ =
\begin{tikzpicture}[baseline=-3]
\node (x) [hvector] {  $\nabla$};
\draw[thick]  (x.north)--+(0,10pt);
\draw[thick]  (x.south west) --+(0,-10pt);
\draw[thick]  (x.south east) --+(0,-10pt);
\end{tikzpicture}
\ \ =\ 
\begin{tikzpicture}[scale=1.6,baseline=-2]
\draw[thick]  (0,0)--(0,10pt)(0,0) to [out=-135,in=90] (-5pt,-10pt) (0,0)
  to [out=-45,in=90] (5pt,-10pt);
\end{tikzpicture}
\quad (\text{product}),\qquad
\Delta\ =\ 
\begin{tikzpicture}[baseline=-3]
\node (x) [hvector] {  $\Delta$};
\draw[thick]  (x.south)--+(0,-10pt) (x.north west)--+(0,10pt) (x.north east)--+(0,10pt);
\end{tikzpicture}
\ =\
\begin{tikzpicture}[scale=1.65,baseline=-2]
\draw[thick]  (0,0)--(0,-10pt) (0,0) to [out=135,in=-90] (-5pt,10pt) (0,0)
  to [out=45,in=-90] (5pt,10pt);
\end{tikzpicture}
\quad (\text{coproduct}),
\end{equation}
\begin{equation}
\eta\ =\ 
\begin{tikzpicture}[baseline]
\node (x) [hvector] {  $\eta$};
\draw[thick]  (x.north)--+(0,10pt);
\end{tikzpicture}
\ =\ 
\begin{tikzpicture}[scale=1.5,baseline=2]
\node (x) [unit]{};
\draw[thick]  (x)--+(0,10pt);
\end{tikzpicture}
\quad (\text{unit}),\qquad
\epsilon\ =\ 
\begin{tikzpicture}[baseline=-7]
\node (x) [hvector] {  $\epsilon$};
\draw[thick]  (x.south) --+(0,-10pt);
\end{tikzpicture}
\ =\ 
\begin{tikzpicture}[scale=1.5,baseline=-7]
\node (x)[counit] {};
\draw[thick]  (x)--+(0,-10pt);
\end{tikzpicture}
\quad (\text{counit}),
\end{equation}
\begin{equation}\label{eq:inverse-map-group}
\antipode\ =\ 
\begin{tikzpicture}[baseline=-3]
\node (x) [hvector] {  $\antipode$};
\draw[thick]  (x.north)--+(0,10pt) (x.south)--+(0,-10pt);
\end{tikzpicture}
\ =\ 
\begin{tikzpicture}[scale=1.5,baseline=-3]
\node (x)[antipode]{};
\draw[thick]  (x)--+(0,10pt)(x)--+(0,-10pt);
\end{tikzpicture}
\quad (\text{antipode}).
\end{equation}
We complete this with the graphical notation for the symmetry
\begin{equation}
 \mysym_{H,H} \ =\ 
\begin{tikzpicture}[baseline=-3]
\node (x)[hvector] {  $ \mysym_{H,H} $};
\draw[thick]  (x.north west)--+(0,5pt);
\draw[thick]  (x.north east)--+(0,5pt);
\draw[thick]  (x.south west)--+(0,-5pt);
\draw[thick]  (x.south east)--+(0,-5pt);
\end{tikzpicture}
\ =\ 
\begin{tikzpicture}[scale=1.5,baseline=-3]
\draw[thick]  (-5pt,-10pt) to [out=90,in=-90] (5pt,10pt) (5pt,-10pt)
  to [out=90,in=-90] (-5pt,10pt);
\end{tikzpicture}.
\end{equation}
The relations or axioms of a Hopf object take the following graphical form:
\begin{equation}
\begin{tikzpicture}[scale=2,baseline=10pt]
\draw[thick] (0,0) to [out=90,in=-135](5pt,5pt)(10pt,0) to [out=90,in=-45](5pt,5pt)
(5pt,5pt) to [out=90,in=-135](10pt,10pt)(20pt,0) to [out=90,in=-45](10pt,10pt)
(10pt,10pt)--(10pt,15pt);
\end{tikzpicture}
\ =\ 
\begin{tikzpicture}[scale=2,baseline=10pt]
\draw[thick] (0,0) to [out=90,in=-135] (10pt,10pt) (10pt,0)
to [out=90,in=-135](15pt,5pt)
(15pt,5pt) to [out=90,in=-45] (10pt,10pt) (20pt,0) to [out=90,in=-45] (15pt,5pt)
(10pt,10pt)--(10pt,15pt);
\end{tikzpicture} \quad(\text{associativity}), 
\qquad
\begin{tikzpicture}[scale=2,baseline=6pt]
\node (x) [unit]{};
\draw[thick] (x) to [out=90,in=-135] +(5pt,5pt);
\draw[thick] (x)+(10pt,0) to [out=90,in=-45] ++(5pt,5pt)--+(0,5pt);
\end{tikzpicture}
\ =\ 
\begin{tikzpicture}[scale=2,baseline=6pt]
\draw[thick] (0,0)--(0,10pt);
\end{tikzpicture}
\ =\ 
\begin{tikzpicture}[scale=2,baseline=6pt]
\node (x) [unit]{};
\draw[thick] (x) to [out=90,in=-45] +(-5pt,5pt);
\draw[thick] (x)+(-10pt,0) to [out=90,in=-135] ++(-5pt,5pt)--+(0,5pt);
\end{tikzpicture}
\quad (\text{unitality}),
\end{equation}
\begin{equation}
\begin{tikzpicture}[scale=2,baseline=10pt]
\draw[thick] (0,0)--(0,5pt) to [out=135,in=-90] (-5pt,10pt)
to [out=135,in=-90] (-10pt,15pt)
(-5pt,10pt) to [out=45,in=-90] (0,15pt) (0,5pt) to [out=45,in=-90] (10pt,15pt);
\end{tikzpicture}
\ =\ 
\begin{tikzpicture}[scale=2,baseline=10pt]
\draw[thick] (0,0)--(0,5pt) to [out=45,in=-90] (5pt,10pt) to [out=45,in=-90] (10pt,15pt)
(5pt,10pt) to [out=135,in=-90] (0,15pt) (0,5pt) to [out=135,in=-90] (-10pt,15pt);
\end{tikzpicture}
\quad (\text{coassociativity}),
\qquad
\begin{tikzpicture}[scale=2,baseline=-12]
\node (x) [counit]{};
\draw[thick] (x) to [out=-90,in=135] +(5pt,-5pt);
\draw[thick] (x)+(10pt,0) to [out=-90,in=45] ++(5pt,-5pt)--+(0,-5pt);
\end{tikzpicture}
\ =\ 
\begin{tikzpicture}[scale=2,baseline=8]
\draw[thick] (0,0)--(0,10pt);
\end{tikzpicture}
\ =\ 
\begin{tikzpicture}[scale=2,baseline=-12]
\node (x) [counit]{};
\draw[thick] (x) to [out=-90,in=45] +(-5pt,-5pt);
\draw[thick] (x)+(-10pt,0) to [out=-90,in=135] ++(-5pt,-5pt)--+(0,-5pt);
\end{tikzpicture}\quad
(\text{counitality}),
\end{equation}
\begin{equation}
\begin{tikzpicture}[scale=2,baseline=-3]
\node (x)[antipode]{};
\draw[thick] (x) to [out=90,in=-135](5pt,5pt)(x) to [out=-90,in=135] (5pt,-5pt)
to [out=45,in=-45] (5pt,5pt)
--(5pt,10pt) (5pt,-5pt)--(5pt,-10pt);
\end{tikzpicture}
\ =\ 
\begin{tikzpicture}[scale=2,baseline=-10]
\node (x)[unit]{};
\path (x)+(0,-7pt) node (y) [counit]{};
\draw[thick] (x)--+(0,7pt);
\draw[thick] (y)--+(0,-7pt);
\end{tikzpicture}
\ =\ 
\begin{tikzpicture}[scale=2,baseline=-3]
\node (x)[antipode]{};
\draw[thick] (x) to [out=90,in=-45](-5pt,5pt) (x) to [out=-90,in=45] (-5pt,-5pt)
to [out=135,in=-135] (-5pt,5pt)
--(-5pt,10pt) (-5pt,-5pt)--(-5pt,-10pt);
\end{tikzpicture}\quad
(\text{invertibility}),
\end{equation}
\begin{equation}\label{graph-comp}
\begin{tikzpicture}[scale=3,baseline=20]
\draw[thick] (0,0)--(0,5pt) to [out=135,in=-135] (0,10pt)--(0,15pt)
(10pt,0)--(10pt,5pt) to [out=45,in=-45] (10pt,10pt)--(10pt,15pt)
(0,5pt)--(10pt,10pt) (10pt,5pt)--(0,10pt);
\end{tikzpicture}
\ =\ 
\begin{tikzpicture}[scale=3,baseline=20]
\draw[thick] (0,0) to [out=90,in=-135] (5pt,5pt)--(5pt,10pt)
to [out=135,in=-90] (0,15pt)
(10pt,0) to [out=90,in=-45] (5pt,5pt) (5pt,10pt) to [out=45,in=-90] (10pt,15pt);
\end{tikzpicture}\quad
(\text{compatibility}).
\end{equation}

Our first refinement is the notion of a braided Hopf algebra in a braided monoidal
category. It generalizes the notion of a Hopf object (defined in the context of a
symmetric monoidal category) by replacing the symmetry $\sigma_{H,H}$ in the
compatibility axiom~\eqref{Hr6}, \eqref{graph-comp} by the braiding 
$\tau=\tau_{H,H}: H \otimes H \to H \otimes H$ that satisfies the Yang--Baxter
equation
\begin{equation}
\label{2YB}
(\tau\otimes \operatorname{id}_H)(\operatorname{id}_H\otimes\tau)
(\tau\otimes \operatorname{id}_H)=(\operatorname{id}_H\otimes\tau)
(\tau\otimes\operatorname{id}_H)(\operatorname{id}_H\otimes\tau).
\end{equation}
In other words,  a braided Hopf object (in a braided monoidal category) is defined
by the same set of structural maps~\eqref{eq:hopfmaps} that satisfy
relations~\eqref{Hr1}--\eqref{Hr5}, while in the compatibility relation
~\eqref{Hr6}, \eqref{graph-comp} the symmetry $\sigma$ is replaced by the braiding
$\tau$ 
\begin{equation}
\label{Hr6tau}
\text{compatibility :}  \hspace{0.2cm}
(\nabla \otimes \nabla)(\operatorname{id}_H \otimes \tau_{H,H} \otimes
\operatorname{id}_H)(\Delta \otimes \Delta) = \Delta \nabla.
\end{equation}
One can show that in any braided Hopf algebra, the antipode satisfies the relations
\begin{equation}
\label{Hr5tau}
S \nabla = \nabla \tau_{H,H} (S \otimes S), \qquad 
\Delta S= (S \otimes S)  \tau_{H,H} \Delta
\end{equation}
which can be proven, for example, following the same line of reasoning as in
Section~1.9 of~\cite{Ka:hopf}.

In the diagrammatic language, we denote the braiding morphism by
\begin{equation}
\begin{tikzpicture}[baseline=-3]
\node (x)[hvector] {  $ \tau_{H,H} $};
\draw[thick]  (x.north west)--+(0,10pt);
\draw[thick]  (x.north east)--+(0,10pt);
\draw[thick]  (x.south west)--+(0,-10pt);
\draw[thick]  (x.south east)--+(0,-10pt);
\end{tikzpicture}
\ =\ 
\begin{tikzpicture}[baseline=13,yscale=1.1]
\draw[thick] (0,1) to [out=-90,in=90] (1,0);
\draw[line width=3pt,white] (1,1) to [out=-90,in=90] (0,0);
\draw[thick] (1,1) to [out=-90,in=90] (0,0);
\end{tikzpicture}
\end{equation}
so that the compatibility relation~\eqref{Hr6tau} takes the graphical form
(cf.~\eqref{graph-comp})
\begin{equation}
\begin{tikzpicture}[scale=3,baseline=20]
\draw[thick]
(10pt,5pt)--(0,10pt);
\draw[line width=3pt,white](0,5pt)--(10pt,10pt);
\draw[thick] (0,5pt)--(10pt,10pt);
\draw[thick] (0,0)--(0,5pt) to [out=135,in=-135] (0,10pt)--(0,15pt)
(10pt,0)--(10pt,5pt) to [out=45,in=-45] (10pt,10pt)--(10pt,15pt);
\end{tikzpicture}
\ =\ 
\begin{tikzpicture}[scale=3,baseline=20]
\draw[thick] (0,0) to [out=90,in=-135] (5pt,5pt)--(5pt,10pt)
to [out=135,in=-90] (0,15pt)
(10pt,0) to [out=90,in=-45] (5pt,5pt) (5pt,10pt) to [out=45,in=-90] (10pt,15pt);
\end{tikzpicture}
\end{equation}
and relations~\eqref{Hr5tau} become 
\begin{equation}
\begin{tikzpicture}[yscale=1.25,baseline=34]
\coordinate (pr) at (0.5,1);
\node (x) at (0.5,1.5) [antipode]{};
\draw[thick] (0,0) to [out=90,in=-135] (pr)
 (1,0) to [out=90,in=-45] (pr)--(x)--(0.5,2);
\end{tikzpicture}
\ =\ 
\begin{tikzpicture}[baseline=20]
\coordinate (pr) at (0.5,1.5);
\node (x1) at (0,0) [antipode]{};
\node (x2) at (1,0) [antipode]{};
\draw[thick] (0,1) to [out=-90,in=90] (x2);
\draw[line width=3pt,white] (1,1) to [out=-90,in=90] (x1);
\draw[thick] (1,1) to [out=-90,in=90] (x1);
\draw[thick] (0,-0.5)--(x1)(1,-0.5)--(x2)
(0,1) to [out=90, in=-135] (pr) (1,1) to [out=90, in=-45] (pr)--(0.5,2);
\end{tikzpicture}\ ,\qquad 
\begin{tikzpicture}[yscale=1.25,rotate=180,baseline=-37]
\coordinate (pr) at (0.5,1);
\node (x) at (0.5,1.5) [antipode]{};
\draw[thick] (0,0) to [out=90,in=-135] (pr)
 (1,0) to [out=90,in=-45] (pr)--(x)--(0.5,2);
\end{tikzpicture}
\ =\ 
\begin{tikzpicture}[rotate=180,baseline=-22]
\coordinate (pr) at (0.5,1.5);
\node (x1) at (0,0) [antipode]{};
\node (x2) at (1,0) [antipode]{};
\draw[thick] (0,1) to [out=-90,in=90] (x2);
\draw[line width=3pt,white] (1,1) to [out=-90,in=90] (x1);
\draw[thick] (1,1) to [out=-90,in=90] (x1);
\draw[thick] (0,-0.5)--(x1)(1,-0.5)--(x2)
(0,1) to [out=90, in=-135] (pr) (1,1) to [out=90, in=-45] (pr)--(0.5,2);
\end{tikzpicture}\ .
\end{equation}

The second refinement that we need is the notion of a Hopf f-object or Hopf
f-algebra which corresponds to a pair $(H,\phi)$ composed of a braided Hopf
algebra $H$ and a braided Hopf algebra automorphism $\phi: H \to H$. In addition to
the axioms of a braided Hopf algebra for $H$, the pair $(H,\phi)$ satisfies the
extra compatibility conditions between $\phi$ and all the structural morphisms of $H$:
\begin{equation}
\label{eq:fi-pr-copr}
\nabla (\phi \otimes \phi) = \phi \nabla ,\quad
(\phi \otimes \phi) \Delta=\Delta \phi,
\end{equation}
\begin{equation}\label{eq:fi-s-et-eps}
S \phi = \phi S,\quad \phi \eta = \eta,\quad \epsilon\phi=\epsilon.
\end{equation}
In the diagrammatic notation, we denote the automorphism $\phi$ by
\begin{equation}
\begin{tikzpicture}[baseline=-3]
\node (f) at (0,0)[hvector]{$\phi$};
\draw[thick] (0,-0.5)--(f)--(0,0.5);
\end{tikzpicture}
\ =\ 
\begin{tikzpicture}[baseline=-3]
\node (f) at (0,0)[autom]{};
\draw[thick] (0,-0.5)--(f)--(0,0.5);
\end{tikzpicture}
\end{equation}
so that the additional compatibility relations~\eqref{eq:fi-pr-copr}
and~\eqref{eq:fi-s-et-eps} take the form
\begin{equation}
\begin{tikzpicture}[baseline=5]
\coordinate (pr) at (0.5,0.5);
\node (f1) at (0,0)[autom]{};
\node (f2) at (1,0)[autom]{};
\draw[thick] (0,-0.5)--(f1) to [out=90,in=-135] (pr)
 (1,-0.5)--(f2) to [out=90,in=-45] (pr)--(0.5,1);
\end{tikzpicture}
\ =\ 
\begin{tikzpicture}[baseline=20]
\coordinate (pr) at (0.5,0.5);
\node (f) at (0.5,1)[autom]{};
\draw[thick] (0,0) to [out=90,in=-135] (pr)
 (1,0) to [out=90,in=-45] (pr)--(f)--(0.5,1.5);
\end{tikzpicture},
\qquad
\begin{tikzpicture}[baseline=-9,rotate=180]
\coordinate (pr) at (0.5,0.5);
\node (f1) at (0,0)[autom]{};
\node (f2) at (1,0)[autom]{};
\draw[thick] (0,-0.5)--(f1) to [out=90,in=-135] (pr)
 (1,-0.5)--(f2) to [out=90,in=-45] (pr)--(0.5,1);
\end{tikzpicture}
\ =\ 
\begin{tikzpicture}[baseline=-23,rotate=180]
\coordinate (pr) at (0.5,0.5);
\node (f) at (0.5,1)[autom]{};
\draw[thick] (0,0) to [out=90,in=-135] (pr)
 (1,0) to [out=90,in=-45] (pr)--(f)--(0.5,1.5);
\end{tikzpicture},
\end{equation}
\begin{equation}
\begin{tikzpicture}[baseline=5]
\node (s) at (0,0.5)[antipode]{};
\node (f) at (0,0)[autom]{};
\draw[thick] (0,-0.5)--(f)--(s)--(0,1);
\end{tikzpicture}
\ =\ 
\begin{tikzpicture}[baseline=-9,rotate=180]
\node (s) at (0,0.5)[antipode]{};
\node (f) at (0,0)[autom]{};
\draw[thick] (0,-0.5)--(f)--(s)--(0,1);
\end{tikzpicture}\ ,
\qquad
\begin{tikzpicture}[baseline=10]
\node (s) at (0,0.5)[autom]{};
\node (u) at (0,0)[unit]{};
\draw[thick] (u)--(s)--(0,1);
\end{tikzpicture}
\ = \ 
\begin{tikzpicture}[baseline=10]
\node (u) at (0,0)[unit]{};
\draw[thick] (u)--(0,1);
\end{tikzpicture}\ ,
\qquad
\begin{tikzpicture}[baseline=-15,rotate=180]
\node (s) at (0,0.5)[autom]{};
\node (u) at (0,0)[counit]{};
\draw[thick] (u)--(s)--(0,1);
\end{tikzpicture}
\ = \ 
\begin{tikzpicture}[baseline=-15,rotate=180]
\node (u) at (0,0)[counit]{};
\draw[thick] (u)--(0,1);
\end{tikzpicture}\ .
\end{equation}

\subsection{Diagrammatics of Yetter-Drinfel'd f-objects}
\label{sub.DYdiag}

In this section we recall the definitions for Yetter--Drinfel'd f-objects over
Hopf f-objects and provide the diagrammatic notation for them.

The original Yetter--Drinfel'd modules were defined by Yetter~\cite{Yetter} and
they are essentially modules over Drinfel'd's  quantum double of a Hopf algebra
(hence the name of Drinfel'd). In the early literature, they were also called
crossed modules; see eg.~\cite[p.~385]{Radford}. A detailed definition
of these modules, their properties in the setting of braided Hopf algebras is given
by Takeuchi~\cite{Takeuchi}.

A left Yetter-Drinfel'd f-object over a  Hopf f-object $H$ is a triple
$(Y,\lambda_L,\delta_L)$ where $\lambda_L\colon H\otimes Y\to Y$ and
$\delta_L\colon Y\to H\otimes Y$ satisfy the left module and left comodule equations

\begin{subequations}
\begin{align}
\label{HDYL1}
\text{left action :} & \hspace{0.2cm}
\lambda_L (\operatorname{id}_H \otimes \lambda_L)
= \lambda_L (\nabla \otimes \operatorname{id}_Y)
\\
\label{HDYL2}
\text{left action of unit :} & \hspace{0.2cm}
\lambda_L (\eta \otimes \operatorname{id}_Y) = \operatorname{id}_Y
\\
\label{HDYL3}
\text{left coaction :} & \hspace{0.2cm}
(\operatorname{id}_H \otimes \delta_L) \delta_L
= (\Delta \otimes \operatorname{id}_Y) \delta_L
\\
\label{HDYL4}
\text{left coaction of counit :} & \hspace{0.2cm}
(\epsilon \otimes \operatorname{id}_Y) \delta= \operatorname{id}_Y
\end{align}
\end{subequations}

In the diagrammatic setting, we will color the left f-objects by the blue color
and the right f-objects by the red color. Using this coloring scheme, the
morphisms $\lambda_L$ and $\delta_L$ of the left Yetter-Drinfel'd f-objects are drawn
graphically as
\begin{equation}
\begin{tikzpicture}[baseline=10]
\node (a) at (0.5,0.5)[hvector]{$\lambda_L$};
\draw[thick,blue] (1,0) to [out=90,in=-45](a.south east) (a)--(0.5,1);
\draw[thick] (0,0) to [out=90,in=-135] (a.south west);
\end{tikzpicture}
\ = \ 
\begin{tikzpicture}[baseline=10]
\coordinate (a) at (0.5,0.5);
\draw[thick] (0,0) to [out=90,in=-135] (a);
\draw[thick,blue] (1,0) to [out=90,in=-45] (a)--(0.5,1);
\end{tikzpicture}\ ,
\qquad
\begin{tikzpicture}[baseline=10]
\node (a) at (0.5,0.5)[hvector]{$\delta_L$};
\draw[thick,blue] (1,1) to [out=-90,in=45](a.north east) (a)--(0.5,0);
\draw[thick] (0,1) to [out=-90,in=135] (a.north west);
\end{tikzpicture}
\ = \ 
\begin{tikzpicture}[baseline=10]
\coordinate (a) at (0.5,0.5);
\draw[thick] (0,1) to [out=-90,in=135] (a);
\draw[thick,blue] (1,1) to [out=-90,in=45] (a)--(0.5,0);
\end{tikzpicture},
\end{equation}
so that we obtain the graphical form of Equations~\eqref{HDYL1}--\eqref{HDYL2}
\begin{equation}
\begin{tikzpicture}[scale=3,baseline=10pt]
\coordinate (u) at (10pt,10pt);
\coordinate (c) at (15pt,5pt);
\coordinate (d1) at (0,0);
\coordinate (d2) at (10pt,0);
\coordinate (d3) at (20pt,0);
\draw[thick] (d1) to [out=90,in=-135] (u) (d2) to [out=90,in=-135](c);
\draw[thick,blue](d3) to [out=90,in=-45] (c) to [out=90,in=-45] (u) --(10pt,15pt);
\end{tikzpicture} 
\ =\ 
\begin{tikzpicture}[scale=3,baseline=10pt]
\coordinate (u) at (10pt,10pt);
\coordinate (c) at (5pt,5pt);
\coordinate (d1) at (0,0);
\coordinate (d2) at (10pt,0);
\coordinate (d3) at (20pt,0);
\draw[thick] (d1) to [out=90,in=-135](c)(d2) to [out=90,in=-45](c)
to [out=90,in=-135](u);
\draw[thick,blue](d3) to [out=90,in=-45](u)--(10pt,15pt);
\end{tikzpicture}\ ,
\qquad
\begin{tikzpicture}[scale=3,baseline=10pt]
\coordinate (u) at (10pt,10pt);
\node (d) at (0,0)[unit]{};
\draw[thick] (d) to [out=90,in=-135](u);
\draw[thick,blue](20pt,0) to [out=90,in=-45](u)--(10pt,15pt);
\end{tikzpicture}
\ = \ 
\begin{tikzpicture}[scale=3,baseline=10pt]
\draw[thick,blue] (0,0)--(0,15pt); 
\end{tikzpicture}\ ,
\end{equation}
of Equations~\eqref{HDYL3}--\eqref{HDYL4}
\begin{equation}
\begin{tikzpicture}[scale=3,yscale=-1,baseline=-20pt]
\coordinate (u) at (10pt,10pt);
\coordinate (c) at (15pt,5pt);
\coordinate (d1) at (0,0);
\coordinate (d2) at (10pt,0);
\coordinate (d3) at (20pt,0);
\draw[thick] (d1) to [out=90,in=-135] (u) (d2) to [out=90,in=-135](c);
\draw[thick,blue](d3) to [out=90,in=-45] (c) to [out=90,in=-45] (u) --(10pt,15pt);
\end{tikzpicture} 
\ =\ 
\begin{tikzpicture}[scale=3,yscale=-1,baseline=-20pt]
\coordinate (u) at (10pt,10pt);
\coordinate (c) at (5pt,5pt);
\coordinate (d1) at (0,0);
\coordinate (d2) at (10pt,0);
\coordinate (d3) at (20pt,0);
\draw[thick] (d1) to [out=90,in=-135](c)(d2) to [out=90,in=-45](c)
to [out=90,in=-135](u);
\draw[thick,blue](d3) to [out=90,in=-45](u)--(10pt,15pt);
\end{tikzpicture},
\qquad
\begin{tikzpicture}[scale=3,yscale=-1,baseline=-20pt]
\coordinate (u) at (10pt,10pt);
\node (d) at (0,0)[counit]{};
\draw[thick] (d) to [out=90,in=-135](u);
\draw[thick,blue](20pt,0) to [out=90,in=-45](u)--(10pt,15pt);
\end{tikzpicture}
\ = \ 
\begin{tikzpicture}[scale=3,baseline=25pt]
\draw[thick,blue] (0,0)--(0,15pt); 
\end{tikzpicture}
\end{equation}
and the compatibility relation~\eqref{eq:DYL}
\begin{equation}
\label{greq:comp-blue}
\begin{tikzpicture}[baseline=30,yscale=.5]
\coordinate (b1) at (0,0);
\coordinate (b2) at (1,0);
\coordinate (cp) at (0,.5);
\coordinate (ac) at (0,2);
\coordinate (cac) at (0,2.5);
\coordinate (pr) at (0,4);
\coordinate (t1) at (0,4.5);
\coordinate (t2) at (1,4.5);
\node (f) at (1,2.25)[autom]{}; 
\draw[thick,blue] (b2) to [out=90,in=-45] (ac);
\draw[thick] (f) to [out=90,in=-45] (pr);
\draw[line width=3pt,white] (cac) to [out=45,in=-90] (t2)(cp) to [out=45,in=-90] (f);
\draw[thick] (b1)--(cp) to  [out=135,in=-135] (ac) (cac)
to [out=135,in=-135] (pr)--(t1) (cp) to [out=45,in=-90] (f);
\draw[thick,blue] (ac)--(cac) to [out=45,in=-90] (t2);
\end{tikzpicture}
\ =\ 
\begin{tikzpicture}[baseline=30,yscale=.5]
\coordinate (b1) at (0,0);
\coordinate (b2) at (1,0);
\coordinate (cp) at (0,.5);
\coordinate (cac) at (1,.5);
\coordinate (pr) at (0,4);
\coordinate (ac) at (1,4);
\coordinate (t1) at (0,4.5);
\coordinate (t2) at (1,4.5);
\draw[thick] (cac)--(pr)--(t1);
\draw[line width=3pt,white] (cp)-- (ac);
\draw[thick] (cp) --(ac)(b1)--(cp) to  [out=105,in=-105](pr);
\draw[thick,blue] (b2)--(cac) to [out=75,in=-75] (ac)--(t2);
\end{tikzpicture}.
\end{equation}
Likewise, a right Yetter-Drinfel'd f-object over a  Hopf f-object $H$ is a triple
$(Y,\lambda_R,\delta_R)$ where $\lambda_R\colon Y\otimes H\to Y$ and
$\delta_R\colon Y\to Y\otimes H$ satisfy the right module and right comodule equations

\begin{subequations}
\begin{align}
\label{HDYR1}
\text{right coaction :} & \hspace{0.2cm}
(\delta_R \otimes \operatorname{id}_H) \delta_R
= (\operatorname{id}_Y \otimes \Delta)   \delta_R
\\
\label{HDYR2}
\text{right  coaction of counit :} & \hspace{0.2cm}
(\operatorname{id}_Y \otimes \epsilon) \delta_R = \operatorname{id}_Y
\\
\label{HDYR3}
\text{right action :} & \hspace{0.2cm}
\lambda_R (\lambda_R \otimes \operatorname{id}_H) =
\lambda_R (\operatorname{id}_Y \otimes \Delta)
\\
\label{HDYR4}
\text{right  action of unit :} & \hspace{0.2cm}
\lambda_R (\operatorname{id}_Y \otimes \eta) = \operatorname{id}_Y
\end{align}
\end{subequations}
The corresponding maps $\lambda_R$ and $\delta_R$ of
the right Yetter-Drinfel'd f-objects are denoted by
\begin{equation}
\begin{tikzpicture}[baseline=-18,rotate=180]
\node (a) at (0.5,0.5)[hvector]{$\lambda_R$};
\draw[thick,red] (1,1) to [out=-90,in=45](a.south west) (a)--(0.5,0);
\draw[thick] (0,1) to [out=-90,in=135] (a.south east);
\end{tikzpicture}
\ = \ 
\begin{tikzpicture}[baseline=-18,rotate=180]
\coordinate (a) at (0.5,0.5);
\draw[thick] (0,1) to [out=-90,in=135] (a);
\draw[thick,red] (1,1) to [out=-90,in=45] (a)--(0.5,0);
\end{tikzpicture}\ ,\qquad
\begin{tikzpicture}[baseline=-18,rotate=180]
\node (a) at (0.5,0.5)[hvector]{$\delta_R$};
\draw[thick,red] (1,0) to [out=90,in=-45](a.north west) (a)--(0.5,1);
\draw[thick] (0,0) to [out=90,in=-135] (a.north east);
\end{tikzpicture}
\ = \ 
\begin{tikzpicture}[baseline=-18,rotate=180]
\coordinate (a) at (0.5,0.5);
\draw[thick] (0,0) to [out=90,in=-135] (a);
\draw[thick,red] (1,0) to [out=90,in=-45] (a)--(0.5,1);
\end{tikzpicture},
\end{equation}
so that we have graphical form of Equations~\eqref{HDYR1} and \eqref{HDYR2} 
\begin{equation}
\begin{tikzpicture}[scale=3,baseline=-20pt,rotate=180]
\coordinate (u) at (10pt,10pt);
\coordinate (c) at (15pt,5pt);
\coordinate (d1) at (0,0);
\coordinate (d2) at (10pt,0);
\coordinate (d3) at (20pt,0);
\draw[thick] (d1) to [out=90,in=-135] (u) (d2) to [out=90,in=-135](c);
\draw[thick,red](d3) to [out=90,in=-45] (c) to [out=90,in=-45] (u) --(10pt,15pt);
\end{tikzpicture} 
\ =\ 
\begin{tikzpicture}[scale=3,baseline=-20pt,rotate=180]
\coordinate (u) at (10pt,10pt);
\coordinate (c) at (5pt,5pt);
\coordinate (d1) at (0,0);
\coordinate (d2) at (10pt,0);
\coordinate (d3) at (20pt,0);
\draw[thick] (d1) to [out=90,in=-135](c)(d2) to [out=90,in=-45](c)
to [out=90,in=-135](u);
\draw[thick,red](d3) to [out=90,in=-45](u)--(10pt,15pt);
\end{tikzpicture}\ ,
\qquad
\begin{tikzpicture}[scale=3,baseline=-20pt,rotate=180]
\coordinate (u) at (10pt,10pt);
\node (d) at (0,0)[counit]{};
\draw[thick] (d) to [out=90,in=-135](u);
\draw[thick,red](20pt,0) to [out=90,in=-45](u)--(10pt,15pt);
\end{tikzpicture}
\ = \ 
\begin{tikzpicture}[scale=3,baseline=-20pt,rotate=180]
\draw[thick,red] (0,0)--(0,15pt); 
\end{tikzpicture}\ ,
\end{equation}
Equations~\eqref{HDYR3} and \eqref{HDYR4} 
\begin{equation}
\begin{tikzpicture}[scale=3,yscale=-1,baseline=25pt,rotate=180]
\coordinate (u) at (10pt,10pt);
\coordinate (c) at (15pt,5pt);
\coordinate (d1) at (0,0);
\coordinate (d2) at (10pt,0);
\coordinate (d3) at (20pt,0);
\draw[thick] (d1) to [out=90,in=-135] (u) (d2) to [out=90,in=-135](c);
\draw[thick,red](d3) to [out=90,in=-45] (c) to [out=90,in=-45] (u) --(10pt,15pt);
\end{tikzpicture} 
\ =\ 
\begin{tikzpicture}[scale=3,yscale=-1,baseline=25pt,rotate=180]
\coordinate (u) at (10pt,10pt);
\coordinate (c) at (5pt,5pt);
\coordinate (d1) at (0,0);
\coordinate (d2) at (10pt,0);
\coordinate (d3) at (20pt,0);
\draw[thick] (d1) to [out=90,in=-135](c)(d2) to [out=90,in=-45](c)
to [out=90,in=-135](u);
\draw[thick,red](d3) to [out=90,in=-45](u)--(10pt,15pt);
\end{tikzpicture},
\qquad
\begin{tikzpicture}[scale=3,yscale=-1,baseline=25pt,rotate=180]
\coordinate (u) at (10pt,10pt);
\node (d) at (0,0)[unit]{};
\draw[thick] (d) to [out=90,in=-135](u);
\draw[thick,red](20pt,0) to [out=90,in=-45](u)--(10pt,15pt);
\end{tikzpicture}
\ = \ 
\begin{tikzpicture}[scale=3,baseline=25pt]
\draw[thick,red] (0,0)--(0,15pt); 
\end{tikzpicture}\ ,
\end{equation}
and the compatibility relation~\eqref{eq:DYR}
 \begin{equation}
\begin{tikzpicture}[baseline=-36,yscale=.5,rotate=180]
\coordinate (b1) at (0,0);
\coordinate (b2) at (1,0);
\coordinate (cp) at (0,.5);
\coordinate (ac) at (0,2);
\coordinate (cac) at (0,2.5);
\coordinate (pr) at (0,4);
\coordinate (t1) at (0,4.5);
\coordinate (t2) at (1,4.5);
\node (f) at (1,2.25)[autom]{}; 
\draw[thick,red] (b2) to [out=90,in=-45] (ac);
\draw[thick] (f) to [out=90,in=-45] (pr);
\draw[line width=3pt,white] (cac) to [out=45,in=-90] (t2)(cp) to [out=45,in=-90] (f);
\draw[thick] (b1)--(cp) to  [out=135,in=-135] (ac) (cac)
  to [out=135,in=-135] (pr)--(t1) (cp) to [out=45,in=-90] (f);
\draw[thick,red] (ac)--(cac) to [out=45,in=-90] (t2);
\end{tikzpicture}
\ =\ 
\begin{tikzpicture}[baseline=-36,yscale=.5,rotate=180]
\coordinate (b1) at (0,0);
\coordinate (b2) at (1,0);
\coordinate (cp) at (0,.5);
\coordinate (cac) at (1,.5);
\coordinate (pr) at (0,4);
\coordinate (ac) at (1,4);
\coordinate (t1) at (0,4.5);
\coordinate (t2) at (1,4.5);
\draw[thick] (cac)--(pr)--(t1);
\draw[line width=3pt,white] (cp)-- (ac);
\draw[thick] (cp) --(ac)(b1)--(cp) to  [out=105,in=-105](pr);
\draw[thick,red] (b2)--(cac) to [out=75,in=-75] (ac)--(t2);
\end{tikzpicture}.
\end{equation}
Note that the diagrammatic form of morphisms and relations for right Yetter-Drinfel'd
f-objects are obtained from those of the left Yetter-Drinfel'd f-objects after
rotating the diagram by 180 degrees and replacing the blue color by the red color.

\subsection{Proof of Theorem~\ref{thm:1}}
\label{sub.thm1}

In this section we prove Theorem~\ref{thm:1} using the diagrammatic language
that we have already described. Before doing so, we need the following diagrammatic
notation for the double-iterated coproduct~\eqref{DD2}
\begin{equation}
\begin{tikzpicture}[baseline=12, yscale=.6,xscale=1.3]
\coordinate (t1) at (-.5,2);
\coordinate (t2) at (0,2);
\coordinate (t3) at (.5,2);
\coordinate (b) at (0,0);
\node (d) at (0,1) [hvector]{$\Delta^{(2)}$};
\draw[thick] (b)--(d)--(t2) (d.north west) to [out=135,in=-90] (t1) (d.north east)
to [out=45,in=-90] (t3); 
\end{tikzpicture}
\ =\ 
\begin{tikzpicture}[baseline=12, yscale=.6,xscale=1.3]
\coordinate (t1) at (-.5,2);
\coordinate (t2) at (0,2);
\coordinate (t3) at (.5,2);
\coordinate (b) at (0,0);
\coordinate (d) at (0,1);
\draw[thick] (b)--(d)--(t2) (d) to [out=135,in=-90] (t1) (d) to [out=45,in=-90] (t3); 
\end{tikzpicture}
\ =\ 
\begin{tikzpicture}[xscale=2,yscale=2.1,baseline=10pt]
\draw[thick] (0,0)--(0,5pt) to [out=135,in=-90] (-5pt,10pt) 
to [out=135,in=-90] (-10pt,15pt)
(-5pt,10pt) to [out=45,in=-90] (0,15pt) (0,5pt) to [out=45,in=-90] (10pt,15pt);
\end{tikzpicture}
\ =\ 
\begin{tikzpicture}[xscale=2,yscale=2.1,baseline=10pt]
\draw[thick] (0,0)--(0,5pt) to [out=45,in=-90] (5pt,10pt) to [out=45,in=-90] (10pt,15pt)
(5pt,10pt) to [out=135,in=-90] (0,15pt) (0,5pt) to [out=135,in=-90] (-10pt,15pt);
\end{tikzpicture}\ .
\end{equation}
We will use similar multivalent vertices for higher-iterated coproducts and products.

Using the following graphical representation of the left coaction~\eqref{eq:coaction}
\begin{equation}
\begin{tikzpicture}[yscale=2,baseline=-37,rotate=180]
\node (a) at (0.5,0.5)[hvector]{$\delta_L$};
\draw[thick] (1,0) to [out=90,in=-45](a.north west) (a)--(0.5,1);
\draw[thick] (0,0) to [out=90,in=-135] (a.north east);
\end{tikzpicture}
\ =\ 
\begin{tikzpicture}[baseline=20]
\node (d) at (0,1)[coact]{};
\coordinate (t1) at (-.5,2);
\coordinate (t2) at (.5,2);
\coordinate (b) at (0,0);
\draw[thick] (b)--(d) to  [out=135,in=-90] (t1) (d) to [out=45,in=-90] (t2); 
\end{tikzpicture}
\ =\ 
\begin{tikzpicture}[xscale=1,baseline=20]
\coordinate (t1) at (-.5,2);
\coordinate (t2) at (.5,2);
\node (s) at (.5,1.1)[antipode]{};
\node (f) at (.5,.8)[autom]{};
\coordinate (pr) at (-.5,1.7);
\coordinate (cp) at (0,.3);
\coordinate (b) at (0,0);
\draw[thick] (s) to  [out=90,in=-45] (pr); 
\draw[line width=3pt, white] (cp) to  [out=90,in=-90]  (t2);
\draw[thick] (b)-- (cp) to  [out=90,in=-90]  (t2) (cp)
to  [out=135,in=-135]  (pr)--(t1)(cp) to  [out=15,in=-90] (f)--(s); 
\end{tikzpicture}\ ,
\end{equation}
we can  prove the left coaction property
\begin{equation}
\begin{tikzpicture}[scale=3,yscale=-1,baseline=-20pt]
\node (u) at (10pt,10pt)[coact]{};
\node (c) at (15pt,5pt)[coact]{};
\coordinate (d1) at (0,0);
\coordinate (d2) at (10pt,0);
\coordinate (d3) at (20pt,0);
\draw[thick] (d1) to [out=90,in=-135] (u) (d2) to [out=90,in=-135](c);
\draw[thick](d3) to [out=90,in=-45] (c) to [out=90,in=-45] (u) --(10pt,15pt);
\end{tikzpicture} 
\ =\ 
\begin{tikzpicture}[scale=3,yscale=-1,baseline=-20pt]
\node (u) at (10pt,10pt)[coact]{};
\coordinate (c) at (5pt,5pt);
\coordinate (d1) at (0,0);
\coordinate (d2) at (10pt,0);
\coordinate (d3) at (20pt,0);
\draw[thick] (d1) to [out=90,in=-135](c)(d2) to [out=90,in=-45](c)
to [out=90,in=-135](u);
\draw[thick](d3) to [out=90,in=-45](u)--(10pt,15pt);
\end{tikzpicture}
\end{equation}
as follows:
\begin{equation}
\begin{tikzpicture}[scale=2,yscale=-3,baseline=-50]
\node (u) at (10pt,10pt)[coact]{};
\node (c) at (15pt,5pt)[coact]{};
\coordinate (d1) at (0,0);
\coordinate (d2) at (10pt,0);
\coordinate (d3) at (20pt,0);
\draw[thick] (d1) to [out=90,in=-135] (u) (d2) to [out=90,in=-135](c);
\draw[thick](d3) to [out=90,in=-45] (c) to [out=90,in=-45] (u) --(10pt,15pt);
\end{tikzpicture} 
\ =\ 
\begin{tikzpicture}[xscale=2/3,yscale=.8,baseline=-4]
\coordinate (t1) at (-.5,2);
\coordinate (t2) at (.5,2);
\node (s) at (.5,1.1)[antipode]{};
\node (f) at (.5,.8)[autom]{};
\coordinate (pr) at (-.5,1.7);
\coordinate (cp) at (0,.3);
\coordinate (b) at (0,0);
\draw[thick] (s) to  [out=90,in=-45] (pr) ; 
\draw[line width=3pt, white] (cp) to  [out=90,in=-90]  (t2);
\draw[thick] (b)--(cp) to  [out=90,in=-90]  (t2) (cp)
to  [out=135,in=-135]  (pr)--(t1)(cp) to  [out=15,in=-90] (f)--(s); 
\coordinate (dt1) at (-.5-.5-.5,2);
\coordinate (dt2) at (.5-.5,2-2);
\node (ds) at (.5-.5,1.1-2)[antipode]{};
\node (df) at (.5-.5,.8-2)[autom]{};
\coordinate (dpr) at (-.5-.5,1.7-2);
\coordinate (dcp) at (0-.5,.3-2);
\coordinate (db) at (0-.5,0-2);
\draw[thick] (ds) to  [out=90,in=-45] (dpr) ; 
\draw[line width=3pt, white] (dcp) to  [out=90,in=-90]  (dt2);
\draw[thick] (db)--(dcp) to  [out=90,in=-90]  (dt2) (dcp)
to  [out=135,in=-135]  (dpr) to  [out=90,in=-90](dt1)(dcp)
to  [out=15,in=-90] (df)--(ds) ; 
\end{tikzpicture}
\ =\ 
\begin{tikzpicture}[xscale=2/3,yscale=.8,baseline=-4]
\coordinate (dt1) at (-1.5,2);
\coordinate (t1) at (-.5,2);
\coordinate (t2) at (.5,2);
\node (s) at (.5,.7)[antipode]{};
\node (f) at (.5,.4)[autom]{};
\coordinate (pr) at (-.5,1.7);
\node (ds) at (.5,1.3-2)[antipode]{};
\node (df) at (.5,1-2)[autom]{};
\coordinate (dpr) at (-1.5,1.7);
\coordinate (dcp) at (0-.5,.3-2);
\coordinate (db) at (0-.5,-2);
\draw[thick] (ds) to  [out=90,in=-45] (dpr); 
\draw[line width=3pt, white](dcp) to  [out=110,in=-135]  (pr)(dcp)
to  [out=70,in=-90](f);
\draw[thick] (s) to  [out=90,in=-45] (pr); 
\draw[line width=3pt, white] (dcp) to  [out=90,in=-90]  (t2);
\draw[thick] (dcp) to  [out=90,in=-90]  (t2); 
\draw[thick] (dcp) to  [out=110,in=-135]  (pr)(dcp) to  [out=70,in=-90] (f); 
\draw[thick] (db)--(dcp) (dcp) to  [out=135,in=-90]  (dpr)
to  [out=90,in=-90](dt1)(dcp) to  [out=15,in=-90] (df)--(ds) (pr)--(t1)(f)--(s); 
\end{tikzpicture}
\ =\ 
\begin{tikzpicture}[xscale=2/3,yscale=.8,baseline=-4]
\coordinate (dt1) at (-1.5,2);
\coordinate (t1) at (-.5,2);
\coordinate (t2) at (.5,2);
\node (s) at (.5,.7)[antipode]{};
\node (f) at (.5,.4)[autom]{};
\coordinate (pr) at (-.5,1.7);
\node (ds) at (.5,1.5-2)[antipode]{};
\node (df) at (.5,1.2-2)[autom]{};
\coordinate (dpr) at (-1.5,1.7);
\coordinate (dcp) at (0-.5,.3-2);
\coordinate (db) at (0-.5,0-2);
\coordinate (cp1) at (-1,0);
\coordinate (cp2) at (0,-2+.7);
\draw[thick] (ds) to  [out=90,in=-45] (dpr); 
\draw[line width=3pt, white](cp1) to  [out=90,in=-135]  (pr)(cp2) to [out=90,in=-90](f);
\draw[thick] (s) to  [out=90,in=-45] (pr) ; 
\draw[line width=3pt, white] (dcp) to  [out=90,in=-90]  (t2);
\draw[thick] (dcp) to  [out=90,in=-90]  (t2); 
\draw[thick] (cp1) to  [out=90,in=-135]  (pr)(cp2) to  [out=90,in=-90] (f); 
\draw[thick] (db)--(dcp) (cp1) to  [out=135,in=-90]  (dpr)
to  [out=90,in=-90](dt1)(cp2) to  [out=15,in=-90] (df)--(ds)
(pr)--(t1)(f)--(s)(dcp) to [out=135,in=-90] (cp1)(dcp) to [out=45,in=-90] (cp2); 
\end{tikzpicture}
\ =\ 
\begin{tikzpicture}[xscale=2/3,yscale=.8,baseline=-4]
\coordinate (dt1) at (-1.5,2);
\coordinate (t1) at (-.5,2);
\coordinate (t2) at (.5,2);
\coordinate (pr) at (-.5,1.7);
\node (sn) at (.5,.7-1)[antipode]{};
\node (fn) at (.5,.4-1)[autom]{};
\coordinate (dpr) at (-1.5,1.7);
\coordinate (dcp) at (0-.5,.3-2);
\coordinate (db) at (0-.5,0-2);
\coordinate (cp1) at (-1,0);
\coordinate (cp2) at (.5,0);
\draw[thick] (cp2) to  [out=135,in=-45] (dpr); 
\draw[line width=3pt, white](cp1) to  [out=90,in=-135]  (pr);
\draw[thick] (cp2) to  [out=90,in=-45] (pr); 
\draw[line width=3pt, white] (dcp) to  [out=90,in=-90]  (t2);
\draw[thick] (dcp) to  [out=90,in=-90]  (t2); 
\draw[thick] (cp1) to  [out=90,in=-135]  (pr); 
\draw[thick] (db)--(dcp) (cp1) to  [out=135,in=-90]  (dpr) to  [out=90,in=-90](dt1)
(pr)--(t1)(dcp) to [out=135,in=-90] (cp1)(dcp) to [out=45,in=-90] (fn)--(sn)
to [out=90,in=-90] (cp2); 
\end{tikzpicture}
\ =\ 
\begin{tikzpicture}[xscale=2/3,yscale=.8,baseline=-4]
\coordinate (dt1) at (-1.5,2);
\coordinate (t1) at (-.5,2);
\coordinate (t2) at (.5,2);
\coordinate (pr) at (-.5,1.7);
\coordinate (prn) at (-1,.5);
\coordinate (cpn) at (-1,1);
\node (sn) at (.5,.7-1)[antipode]{};
\node (fn) at (.5,.4-1)[autom]{};
\coordinate (dpr) at (-1.5,1.7);
\coordinate (dcp) at (0-.5,.3-2);
\coordinate (db) at (0-.5,0-2);
\coordinate (cp1) at (-1,0);
\coordinate (cp2) at (.5,0);
\draw[thick] (sn) to  [out=90,in=-45] (prn); 
\draw[line width=3pt, white] (dcp) to  [out=90,in=-90]  (t2);
\draw[thick] (dcp) to  [out=90,in=-90]  (t2); 
\draw[thick] (db)--(dcp)  to  [out=135,in=-90]  (prn) to  [out=90,in=-90](cpn)
(dcp) to [out=45,in=-90] (fn)--(sn)(cpn) to [out=135,in=-90] (dt1)(cpn)
to [out=45,in=-90] (t1); 
\end{tikzpicture}
\ =\ 
\begin{tikzpicture}[scale=2,yscale=-3,baseline=-50]
\node (u) at (10pt,10pt)[coact]{};
\coordinate (c) at (5pt,5pt);
\coordinate (d1) at (0,0);
\coordinate (d2) at (10pt,0);
\coordinate (d3) at (20pt,0);
\draw[thick] (d1) to [out=90,in=-135](c)(d2) to [out=90,in=-45](c)
to [out=90,in=-135](u);
\draw[thick](d3) to [out=90,in=-45](u)--(10pt,15pt);
\end{tikzpicture}\ .
\end{equation}
Similarly, we can prove the compatibility property~\eqref{eq:DYL},
see~\eqref{greq:comp-blue} for its graphical form, which in this case takes the form
\begin{equation}
\begin{tikzpicture}[baseline=30,yscale=.5]
\coordinate (b1) at (0,0);
\coordinate (b2) at (1,0);
\coordinate (cp) at (0,.5);
\coordinate (ac) at (0,2);
\node (cac) at (0,2.5)[coact]{};
\coordinate (pr) at (0,4);
\coordinate (t1) at (0,4.5);
\coordinate (t2) at (1,4.5);
\node (f) at (1,2.25)[autom]{}; 
\draw[thick] (b2) to [out=90,in=-45] (ac);
\draw[thick] (f) to [out=90,in=-45] (pr);
\draw[line width=3pt,white] (cac) to [out=45,in=-90] (t2)(cp) to [out=45,in=-90] (f);
\draw[thick] (b1)--(cp) to  [out=135,in=-135] (ac) (cac)
to [out=135,in=-135] (pr)--(t1) (cp) to [out=45,in=-90] (f);
\draw[thick] (ac)--(cac) to [out=45,in=-90] (t2);
\end{tikzpicture}
\ =\ 
\begin{tikzpicture}[baseline=30,yscale=.5]
\coordinate (b1) at (0,0);
\coordinate (b2) at (1,0);
\coordinate (cp) at (0,.5);
\node (cac) at (1,.5)[coact]{};
\coordinate (pr) at (0,4);
\coordinate (ac) at (1,4);
\coordinate (t1) at (0,4.5);
\coordinate (t2) at (1,4.5);
\draw[thick] (cac)--(pr)--(t1);
\draw[line width=3pt,white] (cp)-- (ac);
\draw[thick] (cp) --(ac)(b1)--(cp) to  [out=105,in=-105](pr);
\draw[thick] (b2)--(cac) to [out=75,in=-75] (ac)--(t2);
\end{tikzpicture}\ .
\end{equation}
Indeed, with a bit longer graphical calculation, we have
\begin{multline}
\begin{tikzpicture}[baseline=60,yscale=1]
\coordinate (b1) at (0,0);
\coordinate (b2) at (1,0);
\coordinate (cp) at (0,.5);
\coordinate (ac) at (0,2);
\node (cac) at (0,2.5)[coact]{};
\coordinate (pr) at (0,4);
\coordinate (t1) at (0,4.5);
\coordinate (t2) at (1,4.5);
\node (f) at (1,2.25)[autom]{}; 
\draw[thick] (b2) to [out=90,in=-45] (ac);
\draw[thick] (f) to [out=90,in=-45] (pr);
\draw[line width=3pt,white] (cac) to [out=45,in=-90] (t2)(cp) to [out=45,in=-90] (f);
\draw[thick] (b1)--(cp) to  [out=135,in=-135] (ac) (cac)
to [out=135,in=-135] (pr)--(t1) (cp) to [out=45,in=-90] (f);
\draw[thick] (ac)--(cac) to [out=45,in=-90] (t2);
\end{tikzpicture}
\ =\ 
\begin{tikzpicture}[baseline=60,yscale=1]
\coordinate (b1) at (0,0);
\coordinate (b2) at (1,0);
\coordinate (cp) at (0,.5);
\coordinate (ac) at (0,2);
\coordinate(cac) at (0,2.5);
\coordinate (pr) at (0,4);
\coordinate (t1) at (0,4.5);
\coordinate (t2) at (1,4.5);
\node (f) at (1,2.25)[autom]{}; 
\node (s) at (.5,3.1)[antipode]{};
\node (f1) at (.5,2.8)[autom]{};
\draw[thick] (b2) to [out=90,in=-45] (ac);
\draw[thick] (f) to [out=90,in=-45] (pr);
\draw[thick] (s) to [out=90,in=-90] (pr);
\draw[line width=3pt,white] (cac) to [out=90,in=-90] (t2)(cp) to [out=45,in=-90] (f);
\draw[thick] (b1)--(cp) to  [out=135,in=-135] (ac) (cac)
to [out=135,in=-135] (pr)--(t1) (cp) to [out=45,in=-90] (f);
\draw[thick] (ac)--(cac) to [out=90,in=-90] (t2);
\draw[thick] (cac) to [out=45,in=-90] (f1)--(s);
\end{tikzpicture}
\ =\ 
\begin{tikzpicture}[baseline=60,yscale=1]
\coordinate (b1) at (0,0);
\coordinate (b2) at (1,0);
\coordinate (cp) at (0,.5);
\coordinate (pr) at (0,4);
\coordinate (t1) at (0,4.5);
\coordinate (t2) at (1,4.5);
\node (f) at (1,2.25)[autom]{}; 
\node (s) at (.5,3.1)[antipode]{};
\node (f1) at (.5,2.8)[autom]{};
\coordinate (pr1) at (-.3,2.3);
\coordinate (pr2) at (0,2.3);
\coordinate (pr3) at (+.3,2.3);
\coordinate (cp1) at (-.3,1.5);
\coordinate (cp2) at (.3,1.5);
\draw[thick] (b2) to [out=90,in=-90] (cp2);
\draw[thick] (f) to [out=90,in=-45] (pr);
\draw[thick] (s) to [out=90,in=-90] (pr);
\draw[thick] (cp2) to [out=135,in=-45] (pr1);
\draw[line width=3pt,white] (cp1) to [out=90,in=-135] (pr2);
\draw[thick]  (cp2) to [out=90,in=-45] (pr2);
\draw[line width=3pt,white] (pr2) to [out=90,in=-90] (t2)(cp)
to [out=45,in=-90] (f)(cp1) to [out=45,in=-135] (pr3);
\draw[thick] (cp1) to [out=135,in=-135] (pr1)(cp1) to [out=90,in=-135] (pr2)(cp1)
to [out=45,in=-135] (pr3);
\draw[thick] (b1)--(cp) to  [out=135,in=-90] (cp1) (pr1)
to [out=90,in=-135] (pr)--(t1) (cp) to [out=45,in=-90] (f);
\draw[thick] (pr2) to [out=90,in=-90] (t2);
\draw[thick] (pr3) to [out=90,in=-90] (f1)--(s)(cp2) to [out=45,in=-45] (pr3);
\end{tikzpicture}
\ =\ 
\begin{tikzpicture}[baseline=60,yscale=1]
\coordinate (b1) at (0,0);
\coordinate (b2) at (1,0);
\coordinate (cp) at (0,.5);
\coordinate (pr) at (0,4);
\coordinate (t1) at (0,4.5);
\coordinate (t2) at (1,4.5);
\node (f) at (1,2.25)[autom]{}; 
\node (s) at (.5,3.1)[antipode]{};
\node (f1) at (.5,2.8)[autom]{};
\coordinate (pr2) at (0,2.3);
\coordinate (pr3) at (+.3,2.3);
\coordinate (cp2) at (.4,1.1);
\draw[thick] (b2) to [out=90,in=-90] (cp2);
\draw[thick] (f) to [out=90,in=-45] (pr);
\draw[thick] (s) to [out=90,in=-90] (pr);
\draw[thick] (cp2) to [out=135,in=-110] (pr);
\draw[line width=3pt,white] (cp) to [out=110,in=-135] (pr2);
\draw[thick]  (cp2) to [out=90,in=-45] (pr2);
\draw[line width=3pt,white] (pr2) to [out=90,in=-90] (t2)(cp)
to [out=45,in=-90] (f)(cp) to [out=90,in=-135] (pr3);
\draw[thick] (cp) to [out=135,in=-135] (pr)(cp) to [out=110,in=-135] (pr2)(cp)
to [out=90,in=-135] (pr3);
\draw[thick] (b1)--(cp) (pr)--(t1) (cp) to [out=45,in=-90] (f);
\draw[thick] (pr2) to [out=90,in=-90] (t2);
\draw[thick] (pr3) to [out=90,in=-90] (f1)--(s)(cp2) to [out=45,in=-45] (pr3);
\end{tikzpicture}
\ =\ 
\begin{tikzpicture}[baseline=60,yscale=1]
\coordinate (b1) at (0,0);
\coordinate (b2) at (1,0);
\coordinate (cp) at (0,.5);
\node (fn1) at (0,1)[autom]{};
\node (sn1) at (0,1.3)[antipode]{};
\coordinate (pr) at (0,4);
\coordinate (t1) at (0,4.5);
\coordinate (t2) at (1,4.5);
\node (f) at (1,2.25)[autom]{}; 
\node (sn2) at (.6,1.8)[antipode]{};
\node (fn2) at (.6,1.5)[autom]{};
\coordinate (prn) at (.5,2.5);
\coordinate (pr2) at (0,2.3);
\coordinate (pr3) at (+.3,2.3);
\coordinate (cp2) at (.4,1.1);
\draw[thick] (b2) to [out=90,in=-90] (cp2);
\draw[thick] (f) to [out=90,in=-45] (pr);
\draw[thick] (prn) to [out=90,in=-90] (pr);
\draw[thick] (cp2) to [out=135,in=-110] (pr);
\draw[line width=3pt,white] (cp) to [out=110,in=-135] (pr2);
\draw[thick]  (cp2) to [out=90,in=-45] (pr2);
\draw[thick]  (sn2) to [out=90,in=-135] (prn);
\draw[line width=3pt,white] (pr2) to [out=90,in=-90] (t2)(cp)
to [out=45,in=-90] (f)(sn1) to [out=90,in=-45] (prn);
\draw[thick] (cp) to [out=135,in=-135] (pr)(cp) to [out=110,in=-135] (pr2)(sn1)
to [out=90,in=-45] (prn);
\draw[thick] (b1)--(cp) (pr)--(t1) (cp) to [out=45,in=-90] (f);
\draw[thick] (pr2) to [out=90,in=-90] (t2);
\draw[thick] (cp2) to [out=45,in=-90] (fn2)(cp)--(fn1)--(sn1)(fn2)--(sn2) (prn)
to [out=-135,in=45] (.46,2.45);
\end{tikzpicture}
\ =\ 
\begin{tikzpicture}[baseline=60,yscale=1]
\coordinate (b1) at (0,0);
\coordinate (b2) at (1,0);
\coordinate (cp) at (0,.5);
\coordinate (pr) at (0,4);
\coordinate (t1) at (0,4.5);
\coordinate (t2) at (1,4.5);
\node (f) at (1,2.25)[autom]{}; 
\node (sn2) at (.4,1.8)[antipode]{};
\node (fn2) at (.4,1.5)[autom]{};
\node (sn3) at (.6,2.55)[antipode]{};
\node (fn3) at (.6,2.25)[autom]{};
\coordinate (prn) at (.5,2.5);
\coordinate (pr2) at (0,2.3);
\coordinate (cp4) at (.8,1.5);
\coordinate (pr4) at (.8,3);
\coordinate (cp2) at (.2,1.1);
\draw[thick] (b2) to [out=90,in=-90] (cp2);
\draw[thick] (pr4) to [out=90,in=-45] (pr);
\draw[thick] (sn2) to [out=90,in=-90] (pr);
\draw[thick] (cp2) to [out=135,in=-110] (pr);
\draw[line width=3pt,white] (cp) to [out=110,in=-135] (pr2);
\draw[thick]  (cp2) to [out=90,in=-45] (pr2);
\draw[line width=3pt,white] (pr2) to [out=90,in=-90] (t2)(cp) to [out=45,in=-90] (cp4);
\draw[thick] (cp) to [out=135,in=-135] (pr)(cp) to [out=110,in=-135] (pr2);
\draw[thick] (b1)--(cp) (pr)--(t1) (cp) to [out=45,in=-90] (cp4);
\draw[thick] (pr2) to [out=90,in=-90] (t2);
\draw[thick] (cp2) to [out=45,in=-90] (fn2)(fn2)--(sn2)(sn3) to [out=90,in=-135] (pr4)
(cp4) to [out=135,in=-90] (fn3)--(sn3)(cp4) to [out=45,in=-90](f)
to [out=90,in=-45](pr4);
\end{tikzpicture}\\
\ =\ 
\begin{tikzpicture}[baseline=60,yscale=1]
\coordinate (b1) at (0,0);
\coordinate (b2) at (1,0);
\coordinate (cp) at (0,.5);
\coordinate (pr) at (0,4);
\coordinate (t1) at (0,4.5);
\coordinate (t2) at (1,4.5);
\node (sn2) at (.4,1.8)[antipode]{};
\node (fn2) at (.4,1.5)[autom]{};
\node (sn3) at (.6,2.25)[antipode]{};
\coordinate (prn) at (.5,2.5);
\coordinate (pr2) at (0,2.3);
\coordinate (cp4) at (.8,1.5);
\coordinate (pr4) at (.8,3);
\coordinate (cp2) at (.2,1.1);
\node (ff) at (.8,1.2)[autom]{};
\draw[thick] (b2) to [out=90,in=-90] (cp2);
\draw[thick] (pr4) to [out=90,in=-45] (pr);
\draw[thick] (sn2) to [out=90,in=-90] (pr);
\draw[thick] (cp2) to [out=135,in=-110] (pr);
\draw[line width=3pt,white] (cp) to [out=110,in=-135] (pr2);
\draw[thick]  (cp2) to [out=90,in=-45] (pr2);
\draw[line width=3pt,white] (pr2) to [out=90,in=-90] (t2)(cp) to [out=45,in=-90] (ff);
\draw[thick] (cp) to [out=135,in=-135] (pr)(cp) to [out=110,in=-135] (pr2);
\draw[thick] (b1)--(cp) (pr)--(t1) (cp) to [out=45,in=-90] (ff)--(cp4);
\draw[thick] (pr2) to [out=90,in=-90] (t2);
\draw[thick] (cp2) to [out=45,in=-90] (fn2)(fn2)--(sn2)(sn3) to [out=90,in=-120] (pr4)
 (cp4) to [out=120,in=-90] (sn3)(cp4) to [out=60,in=-60](pr4);
\end{tikzpicture}
\ =\ 
\begin{tikzpicture}[baseline=60,yscale=1]
\coordinate (b1) at (0,0);
\coordinate (b2) at (1,0);
\coordinate (cp) at (0,.5);
\coordinate (pr) at (0,4);
\coordinate (t1) at (0,4.5);
\coordinate (t2) at (1,4.5);
\node (sn2) at (.4,1.8)[antipode]{};
\node (fn2) at (.4,1.5)[autom]{};
\coordinate (prn) at (.5,2.5);
\coordinate (pr2) at (0,2.3);
\node (cp4) at (.8,2)[counit]{};
\node (pr4) at (.8,2.5)[unit]{};
\coordinate (cp2) at (.2,1.1);
\node (ff) at (.8,1.2)[autom]{};
\draw[thick] (b2) to [out=90,in=-90] (cp2);
\draw[thick] (pr4) to [out=90,in=-45] (pr);
\draw[thick] (sn2) to [out=90,in=-90] (pr);
\draw[thick] (cp2) to [out=135,in=-110] (pr);
\draw[line width=3pt,white] (cp) to [out=110,in=-135] (pr2);
\draw[thick]  (cp2) to [out=90,in=-45] (pr2);
\draw[line width=3pt,white] (pr2) to [out=90,in=-90] (t2)(cp) to [out=45,in=-90] (ff);
\draw[thick] (cp) to [out=135,in=-135] (pr)(cp) to [out=110,in=-135] (pr2);
\draw[thick] (b1)--(cp) (pr)--(t1) (cp) to [out=45,in=-90] (ff)--(cp4);
\draw[thick] (pr2) to [out=90,in=-90] (t2);
\draw[thick] (cp2) to [out=45,in=-90] (fn2)(fn2)--(sn2)
 ;
\end{tikzpicture}
\ =\ 
\begin{tikzpicture}[baseline=60,yscale=1]
\coordinate (b1) at (0,0);
\coordinate (b2) at (1,0);
\coordinate (cp) at (0,.5);
\coordinate (pr) at (0,4);
\coordinate (t1) at (0,4.5);
\coordinate (t2) at (1,4.5);
\node (sn2) at (1,2.3)[antipode]{};
\node (fn2) at (1,2)[autom]{};
\coordinate (prn) at (.5,2.5);
\coordinate (pr2) at (.5,2.25);
\coordinate (cp2) at (.7,.8);
\draw[thick] (b2) to [out=90,in=-90] (cp2);
\draw[thick] (sn2) to [out=90,in=-45] (pr);
\draw[thick] (cp2) to [out=135,in=-90] (pr);
\draw[line width=3pt,white] (cp) to [out=45,in=-135] (pr2);
\draw[thick]  (cp2) to [out=90,in=-45] (pr2);
\draw[line width=3pt,white] (pr2) to [out=90,in=-90] (t2);
\draw[thick] (cp) to [out=135,in=-135] (pr)(cp) to [out=45,in=-135] (pr2);
\draw[thick] (b1)--(cp) (pr)--(t1);
\draw[thick] (pr2) to [out=90,in=-90] (t2);
\draw[thick] (cp2) to [out=45,in=-90] (fn2)--(sn2);
\end{tikzpicture}
\ =\ 
\begin{tikzpicture}[baseline=60,yscale=1]
\coordinate (b1) at (0,0);
\coordinate (b2) at (1,0);
\coordinate (cp) at (0,.5);
\coordinate (pr) at (0,4);
\coordinate (pr1) at (.9,2);
\coordinate (t1) at (0,4.5);
\coordinate (t2) at (1,4.5);
\node (sn2) at (1.2,1.4)[antipode]{};
\node (fn2) at (1.2,1.1)[autom]{};
\coordinate (pr2) at (1,4);
\coordinate (cp2) at (1,.8);
\draw[thick] (b2) to [out=90,in=-90] (cp2);
\draw[thick] (sn2) to [out=90,in=-45] (pr1);
\draw[line width=3pt,white] (cp2) to [out=90,in=-45] (pr2);
\draw[thick] (cp2) to [out=135,in=-135] (pr1) (pr1) to [out=90,in=-45] (pr);
\draw[line width=3pt,white] (cp) to [out=45,in=-135] (pr2);
\draw[thick]  (cp2) to [out=90,in=-45] (pr2);
\draw[line width=3pt,white] (pr2) to [out=90,in=-90] (t2)
  ;
\draw[thick] (cp) to [out=135,in=-135] (pr)(cp) to [out=45,in=-135] (pr2);
\draw[thick] (b1)--(cp) (pr)--(t1) 
;
\draw[thick] (pr2) to [out=90,in=-90] (t2);
\draw[thick] (cp2) to [out=45,in=-90] (fn2)--(sn2)
 ;
\end{tikzpicture}
\ =\ 
\begin{tikzpicture}[baseline=60,yscale=1]
\coordinate (b1) at (0,0);
\coordinate (b2) at (1,0);
\coordinate (cp) at (0,.5);
\node (cac) at (1,.5)[coact]{};
\coordinate (pr) at (0,4);
\coordinate (ac) at (1,4);
\coordinate (t1) at (0,4.5);
\coordinate (t2) at (1,4.5);
\draw[thick] (cac)--(pr)--(t1);
\draw[line width=3pt,white] (cp)-- (ac);
\draw[thick] (cp) --(ac)(b1)--(cp) to  [out=105,in=-105](pr);
\draw[thick] (b2)--(cac) to [out=75,in=-75] (ac)--(t2);
\end{tikzpicture}.
\end{multline}

This completes the proof of part (a) of Theorem~\ref{thm:1}. The proof
of part (b) is analogous, and is omitted.
\qed

\subsection{Proof of Theorem~\ref{thm:2}}
\label{sub.thm2}

In this section we show that the $R$-matrix~\eqref{eq:r-matrix} satisfies the
Yang--Baxter equation~\eqref{tauYB}, and omit the analogous proofs that the
$R$-matrix~\eqref{eq:r-matrix-r} also satisfies the Yang--Baxter equation.

To begin with, the diagrammatic notation for the $R$-matrices $\rho_L$ and $\rho_R$
is given as follows
\begin{equation}
\begin{tikzpicture}[baseline=-3]
\node (x)[hvector] {  $\rho_L $};
\draw[thick,blue]  (x.north west)--+(0,10pt);
\draw[thick,blue]  (x.north east)--+(0,10pt);
\draw[thick,blue]  (x.south west)--+(0,-10pt);
\draw[thick,blue]  (x.south east)--+(0,-10pt);
\end{tikzpicture}
\ =\
\begin{tikzpicture}[xscale=1/2, yscale=1.3,baseline=16]
\coordinate (t1) at (0,1);
\coordinate (t2) at (1,1);
\coordinate (b1) at (0,0);
\coordinate (b2) at (1,0);
\node (f) at (1, .25)[autom, blue]{};
\coordinate (cac) at (0,.25);
\coordinate (ac) at (0,.75);
\draw[thick,blue] (f) to [out=90,in=-45] (ac);
\draw[line width=3pt,white] (cac) to [out=45,in=-90] (t2);
\draw[thick,blue] (cac) to [out=45,in=-90] (t2);
\draw[thick] (cac) to [out=135,in=-135] (ac);
\draw[thick,blue] (b1) to [out=90,in=-90] (cac)(ac) to [out=90,in=-90](t1)(b2)
to [out=90,in=-90](f);
\end{tikzpicture}\ ,
\qquad
\begin{tikzpicture}[baseline=-3]
\node (x)[hvector] {  $ \rho_R $};
\draw[thick,red]  (x.north west)--+(0,10pt);
\draw[thick,red]  (x.north east)--+(0,10pt);
\draw[thick,red]  (x.south west)--+(0,-10pt);
\draw[thick,red]  (x.south east)--+(0,-10pt);
\end{tikzpicture}
\ =\
\begin{tikzpicture}[xscale=1/2, yscale=1.3,baseline=-21,rotate=180]
\coordinate (t1) at (0,1);
\coordinate (t2) at (1,1);
\coordinate (b1) at (0,0);
\coordinate (b2) at (1,0);
\node (f) at (1, .25)[autom, red]{};
\coordinate (cac) at (0,.25);
\coordinate (ac) at (0,.75);
\draw[thick,red] (f) to [out=90,in=-45] (ac);
\draw[line width=3pt,white] (cac) to [out=45,in=-90] (t2);
\draw[thick,red] (cac) to [out=45,in=-90] (t2);
\draw[thick] (cac) to [out=135,in=-135] (ac);
\draw[thick,red] (b1) to [out=90,in=-90] (cac)(ac) to [out=90,in=-90](t1)(b2)
to [out=90,in=-90](f);
\end{tikzpicture}\ .
\end{equation}
The proof of Theorem~\ref{thm:2} is now given as follows:
\begin{multline}
\begin{tikzpicture}[xscale=.5, yscale=1,baseline=40]
\coordinate (t1) at (0,3);
\coordinate (t2) at (1,3);
\coordinate (t3) at (2,3);
\coordinate (b1) at (0,0);
\coordinate (b2) at (1,0);
\coordinate (b3) at (2,0);
\node (f1) at (1, 2.25)[autom, blue]{};
\node (f2) at (2, 1.25)[autom, blue]{};
\node (f3) at (1, .25)[autom, blue]{};
\coordinate (cac1) at (0,2.25);
\coordinate (ac1) at (0,2.75);
\coordinate (cac2) at (1,1.25);
\coordinate (ac2) at (1,1.75);
\coordinate (cac3) at (0,.25);
\coordinate (ac3) at (0,.75);
\draw[thick,blue] (f3) to [out=90,in=-45] (ac3);
\draw[thick,blue] (f2) to [out=90,in=-45] (ac2);
\draw[thick,blue] (f1) to [out=90,in=-45] (ac1);
\draw[line width=3pt,white] (cac3) to [out=45,in=-90] (cac2) to [out=45,in=-90]
(t3)(cac1) to [out=45,in=-90] (t2);
\draw[thick,blue] (cac3) to [out=45,in=-90] (cac2) to [out=45,in=-90] (t3)(cac1)
to [out=45,in=-90] (t2);
\draw[thick] (cac3) to [out=135,in=-135] (ac3)(cac2) to [out=135,in=-135] (ac2)(cac1)
to [out=135,in=-135] (ac1);
\draw[thick,blue] (b1) to [out=90,in=-90] (cac3)(ac3) to [out=90,in=-90](cac1)(b2)
to [out=90,in=-90](f3)(b3) to [out=90,in=-90](f2)
(ac2) to [out=90,in=-90](f1)(ac1) to [out=90,in=-90](t1);
\end{tikzpicture}
\ =\ 
\begin{tikzpicture}[xscale=.5, yscale=1,baseline=40]
\coordinate (t1) at (0,3);
\coordinate (t2) at (1,3);
\coordinate (t3) at (2,3);
\coordinate (b1) at (0,0);
\coordinate (b2) at (1,0);
\coordinate (b3) at (2,0);
\node (f1) at (2, 2.25)[autom, blue]{};
\node (f2) at (1, 1.25)[autom, blue]{};
\node (f3) at (2, .25)[autom, blue]{};
\coordinate (cac1) at (1,2.25);
\coordinate (ac1) at (1,2.75);
\coordinate (cac2) at (0,1.25);
\coordinate (ac2) at (0,1.75);
\coordinate (cac3) at (1,.25);
\coordinate (ac3) at (1,.75);
\draw[thick,blue] (f3) to [out=90,in=-45] (ac3);
\draw[thick,blue] (f2) to [out=90,in=-45] (ac2);
\draw[thick,blue] (f1) to [out=90,in=-45] (ac1);
\draw[line width=3pt,white] (cac3) to [out=45,in=-90] (f1) (cac2) to [out=45,in=-90]
(cac1) to [out=45,in=-90] (t3);
\draw[thick,blue]  (cac3) to [out=45,in=-90] (f1) (cac2) to [out=45,in=-90] (cac1)
to [out=45,in=-90] (t3);
\draw[thick] (cac3) to [out=135,in=-135] (ac3)(cac2) to [out=135,in=-135] (ac2)(cac1)
to [out=135,in=-135] (ac1);
\draw[thick,blue] (b1) to [out=90,in=-90] (cac2)(b2) to [out=90,in=-90](cac3)(ac3)
to [out=90,in=-90](f2)
(ac2) to [out=90,in=-90](t1)(ac1) to [out=90,in=-90](t2)(b3)
to [out=90,in=-90](f3);
\end{tikzpicture}
\ \Leftrightarrow\ 
\begin{tikzpicture}[xscale=.5, yscale=1,baseline=40]
\coordinate (t1) at (0,3);
\coordinate (t2) at (1,3);
\coordinate (t3) at (2,3);
\coordinate (b1) at (0,0);
\coordinate (b2) at (1,0);
\coordinate (b3) at (2,0);
\node (f1) at (2, .9)[autom, blue]{};
\node (f2) at (2, 1.25)[autom, blue]{};
\node (f3) at (1, .25)[autom, blue]{};
\node (fb) at (.8, 1.5)[autom]{};
\coordinate (cac1) at (0,2.25);
\coordinate (ac1) at (0,2.75);
\coordinate (cac2) at (1,1.25);
\coordinate (ac2) at (1,1.75);
\coordinate (cac3) at (0,.25);
\coordinate (ac3) at (0,.75);
\draw[thick,blue] (f3) to [out=90,in=-45] (ac3);
\draw[thick,blue] (ac2) to [out=90,in=-45] (ac1);
\draw[thick,blue] (f2) to [out=90,in=-45] (ac2);
\draw[line width=3pt,white] (cac3) to [out=45,in=-90] (cac2) to [out=45,in=-90]
(t3)(cac1) to [out=45,in=-90] (t2);
\draw[thick,blue] (cac3) to [out=45,in=-90] (cac2) to [out=45,in=-90] (t3)(cac1)
to [out=45,in=-90] (t2);
\draw[thick] (cac3) to [out=135,in=-135] (ac3)(cac2) to [out=135,in=-90] (fb)
to [out=90,in=-135](ac2)(cac1) to [out=135,in=-135] (ac1);
\draw[thick,blue] (b1) to [out=90,in=-90] (cac3)(ac3) to [out=90,in=-90](cac1)(b2)
to [out=90,in=-90](f3)(b3) to [out=90,in=-90](f1)
to [out=90,in=-90](f2) (ac1) to [out=90,in=-90](t1);
\end{tikzpicture}
\ =\ 
\begin{tikzpicture}[xscale=.5, yscale=1,baseline=40]
\coordinate (t1) at (0,3);
\coordinate (t2) at (1,3);
\coordinate (t3) at (2,3);
\coordinate (b1) at (0,0);
\coordinate (b2) at (1,0);
\coordinate (b3) at (2,0);
\coordinate (f1) at (2,1.5);
\node (f1m) at (1, .25)[autom, blue]{};
\node (f2m) at (2, .6)[autom, blue]{};
\node (f3) at (2, .25)[autom, blue]{};
\coordinate (cac1) at (1,2.25);
\coordinate (ac1) at (1,2.75);
\coordinate (cac2) at (0,1.25);
\coordinate (ac2) at (0,1.75);
\coordinate (cac3) at (1,.75);
\coordinate (ac3) at (1,1.25);
\draw[thick,blue] (f2m) to [out=90,in=-45] (ac3);
\draw[thick,blue] (ac3) to [out=90,in=-45] (ac2);
\draw[thick,blue] (f1) to [out=90,in=-45] (ac1);
\draw[line width=3pt,white] (cac3) to [out=45,in=-90] (f1) (cac2) to [out=45,in=-90]
(cac1) to [out=45,in=-90] (t3);
\draw[thick,blue]  (cac3) to [out=45,in=-90] (f1) (cac2) to [out=45,in=-90] (cac1)
to [out=45,in=-90] (t3);
\draw[thick] (cac3) to [out=135,in=-135] (ac3)(cac2) to [out=135,in=-135] (ac2)(cac1)
to [out=135,in=-135] (ac1);
\draw[thick,blue] (b1) to [out=90,in=-90] (cac2)(b2) to [out=90,in=-90](f1m)
to [out=90,in=-90](cac3) (ac2) to [out=90,in=-90](t1)(ac1)
to [out=90,in=-90](t2)(b3) to [out=90,in=-90](f3)--(f2m);
\end{tikzpicture}
\ \Leftrightarrow\ 
\begin{tikzpicture}[xscale=.5, yscale=1,baseline=40]
\coordinate (t1) at (0,3);
\coordinate (t2) at (1,3);
\coordinate (t3) at (2,3);
\coordinate (b1) at (0,0);
\coordinate (b2) at (1,0);
\coordinate (b3) at (2,0);
\node (fb) at (.8, 1.5)[autom]{};
\coordinate (cac1) at (0,2.25);
\coordinate (ac1) at (0,2.75);
\coordinate (cac2) at (1,1.25);
\coordinate (ac2) at (1,1.75);
\coordinate (cac3) at (0,.25);
\coordinate (ac3) at (0,.75);
\draw[thick,blue] (b2) to [out=90,in=-45] (ac3);
\draw[thick,blue] (ac2) to [out=90,in=-45] (ac1);
\draw[thick,blue] (b3) to [out=90,in=-45] (ac2);
\draw[line width=3pt,white] (cac3) to [out=45,in=-90] (cac2)
to [out=45,in=-90] (t3)(cac1) to [out=45,in=-90] (t2);
\draw[thick,blue] (cac3) to [out=45,in=-90] (cac2) to [out=45,in=-90] (t3)(cac1)
to [out=45,in=-90] (t2);
\draw[thick] (cac3) to [out=135,in=-135] (ac3)(cac2) to [out=135,in=-90] (fb)
to [out=90,in=-135](ac2)(cac1) to [out=135,in=-135] (ac1);
\draw[thick,blue] (b1) to [out=90,in=-90] (cac3)(ac3)
to [out=90,in=-90](cac1) (ac1) to [out=90,in=-90](t1);
\end{tikzpicture}
\ =\ 
\begin{tikzpicture}[xscale=.5, yscale=1,baseline=40]
\coordinate (t1) at (0,3);
\coordinate (t2) at (1,3);
\coordinate (t3) at (2,3);
\coordinate (b1) at (0,0);
\coordinate (b2) at (1,0);
\coordinate (b3) at (2,0);
\coordinate (f1) at (2,1.5);
\coordinate (cac1) at (1,2.25);
\coordinate (ac1) at (1,2.75);
\coordinate (cac2) at (0,1.25);
\coordinate (ac2) at (0,1.75);
\coordinate (cac3) at (1,.75);
\coordinate (ac3) at (1,1.25);
\draw[thick,blue] (b3) to [out=90,in=-45] (ac3);
\draw[thick,blue] (ac3) to [out=90,in=-45] (ac2);
\draw[thick,blue] (f1) to [out=90,in=-45] (ac1);
\draw[line width=3pt,white] (cac3) to [out=45,in=-90] (f1) (cac2) to [out=45,in=-90]
(cac1) to [out=45,in=-90] (t3);
\draw[thick,blue]  (cac3) to [out=45,in=-90] (f1) (cac2) to [out=45,in=-90] (cac1)
to [out=45,in=-90] (t3);
\draw[thick] (cac3) to [out=135,in=-135] (ac3)(cac2) to [out=135,in=-135] (ac2)(cac1)
to [out=135,in=-135] (ac1);
\draw[thick,blue] (b1) to [out=90,in=-90] (cac2)(b2) to [out=90,in=-90](cac3)
(ac2) to [out=90,in=-90](t1)(ac1) to [out=90,in=-90](t2);
\end{tikzpicture}\\
\ \Leftrightarrow\ 
\begin{tikzpicture}[xscale=.5, yscale=1,baseline=40]
\coordinate (t1) at (0,3);
\coordinate (t2) at (1,3);
\coordinate (t3) at (2,3);
\coordinate (b1) at (0,0);
\coordinate (b2) at (1,0);
\coordinate (b3) at (2,0);
\node (fb) at (.5, 1.5)[autom]{};
\coordinate (cac1) at (0,1.7);
\coordinate (ac1) at (0,2.75);
\coordinate (cac2) at (1,1.25);
\coordinate (ac2) at (1,1.75);
\coordinate (cac3) at (0,.25);
\coordinate (ac3) at (0,1.3);
\coordinate (pr) at (0,2.3);
\coordinate (cp) at (0,.7);
\draw[thick,blue] (b2) to [out=90,in=-45] (ac3);
\draw[thick,blue] (b3) to [out=90,in=-45] (ac1);
\draw[thick] (fb) to [out=90,in=-45](pr);
\draw[line width=3pt,white] (cac3) to [out=45,in=-90] (t3) (cp) to [out=45,in=-90]
(fb)(cac1) to [out=45,in=-90] (t2);
\draw[thick,blue] (cac3) to [out=45,in=-90] (t3)(cac1) to [out=45,in=-90] (t2);
\draw[thick] (cac3) to [out=135,in=-90] (cp) to [out=135,in=-135](ac3)(cp)
to [out=45,in=-90] (fb) (cac1) to [out=135,in=-135] (pr)
to [out=90,in=-135](ac1);
\draw[thick,blue] (b1) to [out=90,in=-90] (cac3)(ac3) to [out=90,in=-90](cac1)
(ac1) to [out=90,in=-90](t1);
\end{tikzpicture}
\ =\ 
\begin{tikzpicture}[xscale=.5, yscale=1,baseline=40]
\coordinate (t1) at (0,3);
\coordinate (t2) at (1,3);
\coordinate (t3) at (2,3);
\coordinate (b1) at (0,0);
\coordinate (b2) at (1,0);
\coordinate (b3) at (2,0);
\coordinate (f1) at (2,1.5);
\coordinate (cac1) at (1,2);
\coordinate (ac1) at (1,2.5);
\coordinate (cac2) at (0,.5);
\coordinate (ac2) at (0,2.5);
\coordinate (cac3) at (1,.5);
\coordinate (ac3) at (1,1);
\coordinate (pr) at (0,2);
\coordinate (cp) at (0,1);
\draw[thick,blue] (b3) to [out=90,in=-45] (ac2);
\draw[line width=3pt,white] (cac3) to [out=45,in=-90] (f1); 
\draw[thick,blue]  (cac3) to [out=45,in=-90] (f1);
\draw[thick] (cac3) to [out=135,in=-45] (pr);
\draw[line width=3pt,white] (cp) to [out=45,in=-135] (ac1);
\draw[thick] (cp) to [out=45,in=-135] (ac1);
\draw[thick,blue] (f1) to [out=90,in=-45] (ac1);
\draw[line width=3pt,white] (cac2) to [out=45,in=-90] (t3) to [out=45,in=-90] (t3);
\draw[thick,blue]  (cac2) to [out=45,in=-90] (t3) to [out=45,in=-90] (t3);
\draw[thick](cac2) to [out=135,in=-90] (cp) to [out=135,in=-135] (pr)
to [out=90,in=-135] (ac2);
\draw[thick,blue] (b1) to [out=90,in=-90] (cac2)(b2)
to [out=90,in=-90](cac3) (ac2)
to [out=90,in=-90](t1)(ac1) to [out=90,in=-90](t2);
\end{tikzpicture}
\ \Leftarrow\ 
\begin{tikzpicture}[xscale=.5, yscale=1,baseline=40]
\coordinate (t1) at (0,3);
\coordinate (t2) at (1,3);
\coordinate (t3) at (2,3);
\coordinate (b1) at (0,0);
\coordinate (b2) at (1,0);
\coordinate (b3) at (2,0);
\node (fb) at (.5, 1.5)[autom]{};
\coordinate (cac1) at (0,1.7);
\coordinate (ac1) at (0,2.75);
\coordinate (cac2) at (1,1.25);
\coordinate (ac2) at (1,1.75);
\coordinate (cac3) at (0,.25);
\coordinate (ac3) at (0,1.3);
\coordinate (pr) at (0,2.3);
\coordinate (cp) at (0,.7);
\draw[thick,blue] (b2) to [out=90,in=-45] (ac3);
\draw[thick,blue] (b3) to [out=90,in=-45] (ac1);
\draw[thick] (fb) to [out=90,in=-45](pr);
\draw[line width=3pt,white]  (cp) to [out=45,in=-90] (fb)(cac1) 
to [out=45,in=-90] (t2);
\draw[thick,blue] (cac1) to [out=45,in=-90] (t2);
\draw[thick] (b1) to [out=90,in=-90] (cp) to [out=135,in=-135](ac3)(cp)
to [out=45,in=-90] (fb) (cac1) to [out=135,in=-135] (pr)
to [out=90,in=-135](ac1);
\draw[thick,blue] (ac3) to [out=90,in=-90](cac1) (ac1) to [out=90,in=-90](t1);
\end{tikzpicture}
\ =\ 
\begin{tikzpicture}[xscale=.5, yscale=1,baseline=40]
\coordinate (t1) at (0,3);
\coordinate (t2) at (1,3);
\coordinate (t3) at (2,3);
\coordinate (b1) at (0,0);
\coordinate (b2) at (1,0);
\coordinate (b3) at (2,0);
\coordinate (f1) at (2,1.5);
\coordinate (cac1) at (1,2);
\coordinate (ac1) at (1,2.5);
\coordinate (cac2) at (0,.5);
\coordinate (ac2) at (0,2.5);
\coordinate (cac3) at (1,.5);
\coordinate (ac3) at (1,1);
\coordinate (pr) at (0,2);
\coordinate (cp) at (0,1);
\draw[thick,blue] (b3) to [out=90,in=-45] (ac2);
\draw[line width=3pt,white] (cac3) to [out=45,in=-90] (f1); 
\draw[thick,blue]  (cac3) to [out=45,in=-90] (f1);
\draw[thick] (cac3) to [out=135,in=-45] (pr);
\draw[line width=3pt,white] (cp) to [out=45,in=-135] (ac1);
\draw[thick] (cp) to [out=45,in=-135] (ac1);
\draw[thick,blue] (f1) to [out=90,in=-45] (ac1);
\draw[thick](b1) to [out=90,in=-90] (cp) to [out=135,in=-135] (pr)
to [out=90,in=-135] (ac2);
\draw[thick,blue] (b2) to [out=90,in=-90](cac3) (ac2) to [out=90,in=-90](t1)(ac1)
to [out=90,in=-90](t2);
\end{tikzpicture}
\ \Leftarrow\ 
\begin{tikzpicture}[xscale=.5, yscale=1,baseline=40]
\coordinate (t1) at (0,3);
\coordinate (t2) at (1,3);
\coordinate (t3) at (2,3);
\coordinate (b1) at (0,0);
\coordinate (b2) at (1,0);
\coordinate (b3) at (2,0);
\node (fb) at (.5, 1.5)[autom]{};
\coordinate (cac1) at (0,1.7);
\coordinate (ac1) at (0,2.75);
\coordinate (cac2) at (1,1.25);
\coordinate (ac2) at (1,1.75);
\coordinate (cac3) at (0,.25);
\coordinate (ac3) at (0,1.3);
\coordinate (pr) at (0,2.3);
\coordinate (cp) at (0,.7);
\draw[thick,blue] (b2) to [out=90,in=-45] (ac3);
\draw[thick] (fb) to [out=90,in=-45](pr);
\draw[line width=3pt,white]  (cp) to [out=45,in=-90] (fb)(cac1)
to [out=45,in=-90] (t2);
\draw[thick,blue] (cac1) to [out=45,in=-90] (t2);
\draw[thick] (b1) to [out=90,in=-90] (cp) to [out=135,in=-135](ac3)(cp)
to [out=45,in=-90] (fb) (cac1)
to [out=135,in=-135] (pr) to [out=90,in=-90](t1);
\draw[thick,blue] (ac3) to [out=90,in=-90](cac1);
\end{tikzpicture}
\ =\ 
\begin{tikzpicture}[xscale=.5, yscale=1,baseline=40]
\coordinate (t1) at (0,3);
\coordinate (t2) at (1,3);
\coordinate (t3) at (2,3);
\coordinate (b1) at (0,0);
\coordinate (b2) at (1,0);
\coordinate (b3) at (2,0);
\coordinate (f1) at (2,1.5);
\coordinate (cac1) at (1,2);
\coordinate (ac1) at (1,2.5);
\coordinate (cac2) at (0,.5);
\coordinate (ac2) at (0,2.5);
\coordinate (cac3) at (1,.5);
\coordinate (ac3) at (1,1);
\coordinate (pr) at (0,2.5);
\coordinate (cp) at (0,.5);
\draw[thick] (cac3)--(pr);
\draw[line width=3pt,white] (cp)--(ac1);
\draw[thick] (cp) --(ac1);
\draw[thick,blue]  (cac3) to [out=45,in=-45] (ac1);
\draw[thick](b1) to [out=90,in=-90] (cp) to [out=135,in=-135] (pr)
to [out=90,in=-90] (t1);
\draw[thick,blue] (b2) to [out=90,in=-90](cac3)
(ac1) to [out=90,in=-90](t2);
\end{tikzpicture}
\end{multline}
where the last equality is the compatibility Equation~\eqref{greq:comp-blue}.
This completes the proof of Theorem~\ref{thm:1} for left Yetter--Drinfel'd f-objects.
The proof of the right Yetter--Drinfel'd f-objects is obtained by rotating the above
diagrams by 180 degrees, followed by replacing the blue color by the red color. 
\qed


\section{Braided tensor algebras and Nichols algebras}
\label{sec.tensor}

\subsection{Braided tensor algebras}
\label{sub.bta}

In this section we specialize the abstract language of Hopf f-objects and the
Yetter--Drinfel'd f-objects to the context of a braided category
$\mathcal{C}$ which, as a monoidal category, is a subcategory  of the category
$\mathbf{Vect}_\bF$ of vector spaces over a field $\bF$ with the monoidal
structure given by the tensor product $\otimes_\bF$. The objects of $\mathcal{C}$
will be called \emph{\color{blue} braided vector spaces}. In this case, a Hopf
f-object and a Yetter--Drinfel'd f-object will be respectively called
a \emph{\color{blue} Hopf f-algebra} and
\emph{\color{blue} Yetter--Drinfel'd f-module}. 

Recall that it follows from the definition
that a Hopf f-algebra is a pair $(H,\phi)$ of a braided Hopf algebra and an
automorphism $\phi$ of it. 
There is an elementary universal construction of such pairs $(H,\phi)$ that we now
discuss. 

Fix a braided vector space $V$ of finite dimension $n$. Then, the tensor algebra
$T(V)$ has a unique structure of a braided Hopf algebra determined by declaring all
elements of $V$ to be primitive. We define the rank of $T(V)$ to be the dimension
of $V$, and call the braided Hopf algebra $T(V)$ to be of diagonal type if the
braiding on $V$ is diagonal with respect a basis  $B$ of $V$.

In this case, $T(V)$ is $\Z_{\ge0}^n$-graded and admits a rich Abelian group of braided
Hopf algebra automorphisms. Namely, any map $t\colon B\to \bF_{\ne0}$ corresponds to
a braided Hopf algebra automorphism $\phi_t$ of $T(V)$ uniquely determined by
\begin{equation}
\phi_t b=t_b b,\quad \forall b\in B,
\end{equation}
where we denote by $t_b$ the image $t(b)$. We call such automorphisms
scaling automorphisms.

Summarizing, a finite dimensional vector space $V$ with a diagonal braiding with
respect to a basis $B$ of $V$, together with a map $t \colon B\to \bF_{\ne0}$
determines a pair $(T(V),\phi_t)$ of a braided Hopf algebra and an automorphism
of it. Using Theorems~\ref{thm:1}--\ref{thm:2}, we obtain multi-parameter
infinite-dimensional $R$-matrices over the vector space $T(V)$. Our interest is to
find rigid $R$-matrices which correspond to finite-dimensional Yetter--Drinfel'd
f-modules. We discuss this next.

\subsection{Nichols algebras}
\label{sub.nichols}

It turns out that the braided tensor algebras $T(V)$ defined above have a canonical
quotient called Nichols algebra which is a braided Hopf algebra. It can be finite or
infinite dimensional.

Recall that a \emph{\color{blue} Nichols algebra} over a braided vector space
$V$ is the quotient braided Hopf algebra
$\mathfrak{B}(V)=T(V)/J$ of the tensor algebra $T(V)$ over the maximal (braided)
Hopf algebra ideal $J$ intersecting trivially the part $\bF\oplus V\subset T(V)$.
In the case when the braiding is of diagonal type, the scaling automorphism $\phi_t$ of
$T(V)$ decends to an automorphism of the braided Hopf algebra
$\mathfrak{B}(V)=T(V)/J$ leading thereby to a Hopf f-algebra which we will call
Nichols f-algebra. Thus, finite dimensional Nichols f-algebras can be used as an
input to the construction of multiparameter knot invariants in~\eqref{steps}.

Examples of finite dimensional Nichols algebras are the nilpotent Borel parts of
Lustig's small quantum groups. A detailed description of finite dimensional Nichols
algebras can be found for example in~\cite{AS:finite, AS:pointed, A:on.fin.dim}.
Like quantum groups, 
Nichols algebras have PBW bases~\cite{Kharchenko} and the ones with diagonal braiding
have been classified by Heckenberger~\cite{Heckenberger, Heckenberger:classification}
building on the work of Kharchenko~\cite{Kharchenko} and
Andruskiewitsch--Schneider~\cite{AS:finite}. The list of diagonal Nichols algebras
of rank (that is, dimension of $V$) at most 3 is given in Tables 1 and 2
of~\cite{Heckenberger}, and from this, it follows that the majority of finite
rank Nichols algebras do not come from quantum groups. A presentation of Nichols
algebras of diagonal type in terms of generators and relations is given by
Angiono~\cite{Angiono}.

\subsection{Sub/quotient Yetter--Drinfel'd f-modules of Nichols f-algebras}
\label{sub.subquotient}

If a Nichols algebra is infinite dimensional, we cannot immediately proceed to the
construction of knot invariants. 

It turns out that a Nichols f-algebra  $\mathfrak{B}(V)$, as a left/right
Yetter--Drinfel'd f-module over itself, has a canonical quotient
$^L\mathfrak{B}(V)$ and a canonical subspace $\mathfrak{B}(V)^R$ which are
left and right Yetter--Drinfel'd f-modules over $\mathfrak{B}(V)$ respectively.
The construction of these f-modules
is as follows:
\be
\label{DYLdef}
^L\mathfrak{B}(V) = \mathfrak{B}(V)/ \mathfrak{B}(V)W_{\delta_L}, \qquad
W_{\delta_L}=\{ x \in \mathfrak{B}(V)\setminus \BF \,\, |\,\, \delta_L x=1\otimes x \} 
\ee
where elements of $W_{\delta_L}$ are nothing else but the \emph{\color{blue}coinvariant}
elements in degree $\ge 1$ with respect to the left coaction $\delta_L$,
see~\cite[Def.~8.2.1]{Radford}; and
\be
\label{DYRdef}
\mathfrak{B}(V)^R = U_{\lambda_R}
\ee
where $U_{\lambda_R}\subset \mathfrak{B}(V)$ is the smallest subspace of
$\mathfrak{B}(V)$ that satisfies
\be
\label{DYRdef2}
\Delta W_{\lambda_R} \subset U_{\lambda_R} \otimes \mathfrak{B}(V), \qquad
W_{\lambda_R}=\{ x \in \mathfrak{B}(V) \,\, |\,\,
\lambda_R(x \otimes y)= x \epsilon y, \,\,
\text{for all} \,\, y \in \mathfrak{B}(V) \}
\ee
where elements
of $W_{\lambda_R}$ are nothing else but \emph{\color{blue} invariant} elements with
respect to the right action $\lambda_R$, see~\cite[Def.~11.2.3]{Radford}. Note that,
by taking into account the fact that $\epsilon y=0$ for all $y\in V$, the invariance
condition takes the form $\lambda_R(x \otimes y)=0$ for all $y\in V$.

If any one of the $\mathfrak{B}(V)$ f-modules $^L\mathfrak{B}(V)$ or
$\mathfrak{B}(V)^R$ is finite dimensional, then, by Theorem~\ref{thm:2}, it can be used
in~\eqref{steps} for construction of polynomial knot invariants. 

In the next sections we illustate the quotients and the subspaces of a braided
tensor algebra $T(V)$ when the dimension of $V$ is 1 or 2.


\section{The rank $1$ tensor algebra}
\label{sec.rank1}

\subsection{Definition}
\label{sub.rank1def}

In this section we compute from first principles the $R$-matrices of
Theorem~\ref{thm:1} for the rank $1$ tensor algebra, with no reference to Lie theory.
As we will find out, the corresponding knot invariants are none other than the
colored Jones and the ADO polynomials. 

The rank $1$ tensor algebra $T(\bF)$ is identified with the polynomial algebra
$\bF[x]$ in one indeterminate. It is an infinite dimensional $\bF$-vector space with
basis $B=\{x^k \, | \, k \in \BZ_{\geq 0} \}$. 

The Hopf algebra structure and the braided structure of $T(\bF)$ are determined
by
\be
\label{hopf1}
\Delta x = x\otimes 1 + 1\otimes x, \qquad \tau(x\otimes x) = q \, x\otimes x \,.
\ee
The above equations, together with the axioms of a braided Hopf algebra, and
the choice of the basis, uniquely
determine the braided Hopf algebra structure. The formulas involve the
$q$-Pochhammer symbol $(x;q)_n$ and the $q$-binomial coefficients ${k\brack m}_q$
defined by
\begin{equation}
\label{qpoc}
(x;q)_n:=\prod_{i=0}^{n-1}(1-xq^i), \qquad
{k\brack m}_q:=\frac{(q;q)_k}{(q;q)_{k-m}(q;q)_m} \,.
\end{equation}
Explicitly, we have the following.

\begin{lemma}
\label{lem.H1}
The coproduct, the antipode and the scaling automorphism $\phi_t$ of $T(\bF)$ are
given by
\begin{align}
\label{Delta1}  
\Delta x^k & =\sum_{m=0}^k{k\brack m}_q x^{k-m}\otimes x^m \\
\label{S1}
S x^k &= (-1)^k q^{k(k-1)/2} x^k \\
\label{phi1}
\phi_t x^k &= t^k x^k 
\end{align}
respectively.
\end{lemma}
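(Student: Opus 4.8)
The plan is to prove all three formulas by induction on $k$, working entirely within the braided Hopf algebra $T(\bF)=\bF[x]$ with $x$ primitive and $\tau(x\otimes x)=q\,x\otimes x$. Since $T(\bF)$ is of diagonal type, the braiding on the degree-$k$ component acts by scalars, and the key combinatorial input is the $q$-Pascal recursion ${k+1\brack m}_q={k\brack m-1}_q+q^m{k\brack m}_q$ (equivalently its mirror with the roles swapped), which reflects exactly how the braiding shuffles tensor factors.

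First I would establish \eqref{Delta1}. Since $\Delta$ is an algebra morphism from $T(\bF)$ to $T(\bF)\otimes T(\bF)$, where the latter carries the \emph{braided} tensor product multiplication $(\nabla\otimes\nabla)(\operatorname{id}\otimes\tau_{H,H}\otimes\operatorname{id})$, we have $\Delta x^{k+1}=\Delta x^k\cdot\Delta x$ computed with this braided product. Writing $\Delta x^k=\sum_m {k\brack m}_q x^{k-m}\otimes x^m$ by the inductive hypothesis and multiplying by $\Delta x=x\otimes 1+1\otimes x$, the term $(x^{k-m}\otimes x^m)(1\otimes x)$ contributes $x^{k-m}\otimes x^{m+1}$ directly, while $(x^{k-m}\otimes x^m)(x\otimes 1)$ requires moving $x$ past $x^m$ in the left-hand slot of the second factor across the braiding, which on $x^m\otimes x$ produces the scalar $q^m$, giving $q^m\,x^{k-m+1}\otimes x^m$. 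Collecting the coefficient of $x^{k+1-m}\otimes x^m$ yields ${k\brack m-1}_q+q^m{k\brack m}_q={k+1\brack m}_q$, completing the induction. The base case $k=0,1$ is immediate from \eqref{hopf1}.

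Next, \eqref{S1}: apply the antipode axiom $\nabla(S\otimes\operatorname{id})\Delta=\eta\epsilon$ to $x^k$ for $k\ge 1$, so $\sum_{m=0}^k {k\brack m}_q\, (Sx^{k-m})\cdot x^m=0$. This is a recursion determining $Sx^k$ in terms of $Sx^j$ for $j<k$; substituting the ansatz $Sx^j=(-1)^j q^{j(j-1)/2}x^j$ reduces the identity to the known $q$-binomial vanishing sum $\sum_{m=0}^k (-1)^{k-m} q^{\binom{k-m}{2}}{k\brack m}_q=0$ (a form of the $q$-binomial theorem / Gauss's identity), which I would cite or verify by a short separate induction. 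Here one must be slightly careful that the product $\nabla$ used is the ordinary associative product of $\bF[x]$, so no braiding scalars intervene in this step. Finally, \eqref{phi1} is essentially a definition unwound: $\phi_t$ is the scaling automorphism $\phi_{t}$ with $\phi_t x=t\,x$ on the single basis element $x$ of $V$, and since $\phi_t$ is an algebra morphism, $\phi_t x^k=(\phi_t x)^k=t^k x^k$.

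The main obstacle is purely bookkeeping in the $\Delta$ computation: keeping track of which slot the braiding acts on and confirming that iterating $\tau(x\otimes x)=q\,x\otimes x$ gives precisely the factor $q^m$ when commuting a single $x$ past $x^{m}$ (this uses that $\tau$ restricted to the graded pieces is a scalar and that the Yang--Baxter/naturality lets one slide past a monomial factor-by-factor). Once that scalar is identified, everything collapses to the $q$-Pascal and $q$-binomial-sum identities, which are standard.
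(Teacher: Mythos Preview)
Your proof is correct and follows essentially the same route as the paper. The only cosmetic differences are that the paper packages your induction for \eqref{Delta1} as a single invocation of the $q$-binomial formula (writing $\Delta x = x_1+x_2$ with $x_1=x\otimes1$, $x_2=1\otimes x$ and observing $x_2x_1=qx_1x_2$), and that for \eqref{S1} the paper uses the other half of the antipode axiom, $\nabla(\operatorname{id}\otimes S)\Delta=\eta\epsilon$, leading to the reindexed but equivalent identity $\sum_{m=0}^k{k\brack m}_q(-1)^m q^{m(m-1)/2}=\delta_{k,0}$.
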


\begin{proof}
The primitivity of $x$ implies that
\begin{equation}
\Delta x=x_1+x_2,\quad x_1:=x\otimes 1,\quad x_2:=1\otimes x, 
\end{equation}
and the braiding implies that
\be
\quad x_2x_1=qx_1x_2 \,.
\ee
This, combined with the $q$-binomial formula, gives
\begin{equation}
  \Delta x^k=(x_1+x_2)^k=\sum_{m=0}^k{k\brack m}_q x_1^{k-m}x_2^m
  =\sum_{m=0}^k{k\brack m}_q x^{k-m}\otimes x^m \,.
\end{equation}
This proves~\eqref{Delta1}. To prove~\eqref{S1}, apply~\eqref{Hr5tau} for
$x^k \in T(\bF)$, use $\eta \, \epsilon \, x^k=\delta_{k,0}$ and compute
\begin{align*}
\nabla(\operatorname{id}_H \otimes S) \Delta x^k &=
\nabla(\operatorname{id}_H \otimes S) \Big(
\sum_{m=0}^k{k\brack m}_q x^{k-m}\otimes x^m \Bigr)
\\ & =
\nabla \Big(\sum_{m=0}^k{k\brack m}_q x^{k-m}\otimes S x^m \Bigr) 
 =
\sum_{m=0}^k{k\brack m}_q x^{k-m} S x^m  \,.
\end{align*}
This is a linear system of equations that uniquely determines $S x^k$ by
induction on $k$. Since
\be
\label{qbinom2}
\sum_{m=0}^k{k\brack m}_q (-1)^m q^{m(m-1)/2} = \delta_{k,0}
\ee
Equation~\eqref{S1} follows. Finally~\eqref{phi1} is clear since $\phi_t$ is
an automorphism and $\phi_t x = t x$. 
\end{proof}

\subsection{The left and right Yetter--Drinfel'd f-modules}
\label{sub.rank1DY}

In this section we compute the $R$-matrices of Theorem~\ref{thm:2} explicitly.
We first compute the doubly iterated coproduct~\eqref{DD2}, the
coaction~\eqref{eq:coaction} and the $R$-matrix~\eqref{eq:r-matrix}. The formulas
involve the $q$-multinomial coefficients defined by
\begin{equation}
{k\brack m,n}_q:=\frac{(q;q)_k}{(q;q)_{k-m-n}(q;q)_m(q;q)_n} \,.
\end{equation}

\begin{lemma}
\label{lem.deltaL1}
The doubly iterated coproduct $\Delta^{(2)}$, the left coaction $\delta_L$ and the
$R$-matrix~\eqref{eq:r-matrix} $^L\rho$ are given by
\begin{align}
\label{DeltaL1}
\Delta^{(2)} x^k &=
\sum_{m=0}^k\sum_{n=0}^{k-m}{k\brack m,n}_q  x^{k-m-n}\otimes x^{m}\otimes x^n  \\
\label{eq:coaction-r-1}
\delta_L x^k &=
\sum_{m=0}^k{k\brack m}_q(tq^{m};q)_{k-m}x^{k-m}\otimes x^{m} \\
\label{R1left}
\rho_L(x^k\otimes x^l) &=
\sum_{m=0}^k{k\brack m}_q(tq^{k-m};q)_{m}(tq^{k-m})^lx^{l+m}\otimes x^{k-m} \,.
\end{align}
\end{lemma}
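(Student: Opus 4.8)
The strategy is to prove the three formulas in sequence, each one feeding into the next, so that the proof of \eqref{R1left} only needs to assemble \eqref{eq:coaction-r-1} with the braiding and the automorphism via the definition \eqref{eq:r-matrix} of $\rho_L$. First I would establish \eqref{DeltaL1}: since $\Delta^{(2)} = (\Delta \otimes \operatorname{id}_H)\Delta$, I apply Lemma~\ref{lem.H1} twice, obtaining $\sum_{m'}\sum_{j} {k \brack m'}_q {k-m' \brack j}_q x^{k-m'-j} \otimes x^j \otimes x^{m'}$, and then verify the $q$-multinomial identity ${k \brack m'}_q{k-m' \brack j}_q = {k \brack j, m'}_q$ directly from the definitions in \eqref{qpoc} and of ${k\brack m,n}_q$ — this is a one-line cancellation of $q$-Pochhammer symbols. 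A small bookkeeping point: one must match the index conventions in \eqref{DeltaL1} (the middle tensor factor carries exponent $m$, the last carries $n$), so I would relabel so that $m' = n$ plays the role of the last slot and $j = m$ the middle; the symmetry ${k\brack m,n}_q = {k\brack n,m}_q$ makes this harmless.

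Next, for \eqref{eq:coaction-r-1}, I would plug \eqref{DeltaL1} into the definition \eqref{eq:coaction} of $\delta_L$, namely $\delta_L = (\nabla \otimes \operatorname{id}_H)(\operatorname{id}_H \otimes \tau_{H,H})(\operatorname{id}_{H\otimes H} \otimes S\phi_H)\Delta^{(2)}$. Applying $S\phi_H$ to the third factor $x^n$ gives $t^n(-1)^n q^{n(n-1)/2} x^n$ by \eqref{S1} and \eqref{phi1} (using $\phi_H = \phi_t$ here, i.e. $\phi$ acting by the chosen scaling $t$); then the braiding $\tau_{H,H}$ swaps the second and third factors $x^m \otimes x^n \mapsto q^{mn} x^n \otimes x^m$ using $\tau(x\otimes x) = q\,x\otimes x$ iterated (so the swap of $x^m$ past $x^n$ produces $q^{mn}$); finally $\nabla$ multiplies the first two factors $x^{k-m-n} \cdot x^n = x^{k-n}$. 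Collecting, I get $\delta_L x^k = \sum_{m,n} {k \brack m,n}_q (-1)^n q^{n(n-1)/2} q^{mn} t^n \, x^{k-n} \otimes x^m$. Re-indexing with $\ell := k - n$ on the first slot (so the outer sum is over $\ell$ and the inner over $m$ with $m \le \ell$; note the surviving constraint $m+n \le k$ becomes $m \le \ell$), the coefficient of $x^{\ell}\otimes x^m$ is $\sum_{n} {k \brack m, n}_q (-1)^n q^{n(n-1)/2 + mn} t^n$ with $n = k-\ell$ fixed — wait, this needs care: after $\nabla$ the first exponent $k-n$ does \emph{not} determine $n$ once we also sum, but actually the second factor exponent is exactly $m$ and the first is $k-n$, so both $m$ and $n$ are pinned by the output monomial. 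Hence no sum survives beyond the one over $n$ being absorbed. The cleaner route: fix output $x^{k-n}\otimes x^m$; comparing with the target \eqref{eq:coaction-r-1} written with summation index $m$ and first exponent $k-m$, I relabel $m \leftrightarrow$ (the second-slot exponent) and the first-slot exponent $k-n$ should read $k-m$ in their notation, forcing their "$m$" to be my $n$, and summing over my $m$ (their hidden variable). Then the claim reduces to the $q$-identity
\[
\sum_{j=0}^{k-m} {k \brack m, j}_q (-1)^j q^{j(j-1)/2} (tq^{?})^j \,\cdots \;=\; {k \brack m}_q (t q^{m}; q)_{k-m},
\]
i.e. after factoring ${k\brack m}_q$ it becomes $\sum_{j} {k-m \brack j}_q (-1)^j q^{j(j-1)/2}(tq^m)^j = (tq^m; q)_{k-m}$, which is exactly the $q$-binomial theorem $(z;q)_N = \sum_j {N \brack j}_q (-1)^j q^{j(j-1)/2} z^j$. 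So this step reduces to citing/invoking that classical identity with $z = t q^m$, $N = k-m$, plus tracking that the powers of $q$ from the braiding combine correctly to shift $t$ by $q^m$.

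Finally, \eqref{R1left} follows by substituting \eqref{eq:coaction-r-1} into $\rho_L = (\lambda \otimes \operatorname{id}_Y)(\operatorname{id}_H \otimes \tau_{Y,Y})(\delta \otimes \phi_Y)$ with $Y = T(\bF)$, $\lambda = \nabla$, $\delta = \delta_L$, and $\phi_Y = \phi_t$. Concretely: $\delta_L x^k \otimes \phi_t x^l = \sum_m {k\brack m}_q (tq^m; q)_{k-m}\, x^{k-m}\otimes x^m \otimes (t^l x^l)$; applying $\tau_{Y,Y}$ to the last two factors swaps $x^m$ past $x^l$ producing $q^{ml}$ and order $x^l \otimes x^m$; applying $\nabla = \lambda$ to the first two multiplies $x^{k-m}\cdot x^l = x^{k-m+l}$. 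This yields $\sum_m {k\brack m}_q (tq^m;q)_{k-m}\, t^l q^{ml}\, x^{k-m+l}\otimes x^m$; the reindexing $m \mapsto k-m$ (so the inner factor becomes $x^{k-m}$ and the outer becomes $x^{l+m}$) turns $(tq^m;q)_{k-m}$ into $(tq^{k-m};q)_{m}$ and $t^l q^{ml}$ into $(tq^{k-m})^l$, matching \eqref{R1left} exactly. I expect the main obstacle to be purely organizational: getting every braiding-induced power of $q$ right (each transposition of $x$ past $x$ costs one factor of $q$, so moving $x^a$ past $x^b$ costs $q^{ab}$), keeping the antipode sign $(-1)^n q^{n(n-1)/2}$ attached through the swap, and then recognizing the resulting alternating $q$-sum as an instance of the $q$-binomial theorem so that the $t \mapsto tq^{k-m}$ shift emerges cleanly — there is no conceptual difficulty once the $q$-binomial theorem is in hand, but the exponent bookkeeping in $\delta_L$ and $\rho_L$ must be done carefully in the order above.
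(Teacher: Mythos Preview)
Your approach is essentially identical to the paper's: compute $\Delta^{(2)}$ by iterating Lemma~\ref{lem.H1} and reducing the multinomial coefficient, then unwind the definition~\eqref{eq:coaction} of $\delta_L$ and recognize the inner sum as the $q$-binomial theorem $\sum_j {N\brack j}_q(-z)^jq^{j(j-1)/2}=(z;q)_N$ with $z=tq^m$, $N=k-m$, and finally assemble $\rho_L$ from~\eqref{eq:r-matrix} and reindex $m\mapsto k-m$. The one slip that causes the visible thrashing in your $\delta_L$ paragraph is the arithmetic $x^{k-m-n}\cdot x^n = x^{k-n}$: this should be $x^{k-m}$, so the output is $x^{k-m}\otimes x^m$ (only $m$ is pinned by the output monomial, and the genuine inner sum is over $n$); once you correct that, the reindexing confusion disappears and you land directly on the $q$-binomial identity you eventually state.
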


\begin{proof}
We compute 
\be
\begin{aligned}
\Delta^{(2)} x^k &=\sum_{m=0}^k{k\brack m}_q x^{k-m}\otimes\Delta x^m
=\sum_{m=0}^k{k\brack m}_q x^{k-m}\otimes\sum_{n=0}^m{m\brack n}_q x^{m-n}\otimes x^n
\\
&=\sum_{0\le n\le m\le k}{k\brack m}_q {m\brack n}_q x^{k-m}\otimes x^{m-n}\otimes x^n
\\
&=\sum_{n=0}^k\sum_{m=0}^{k-n}{k\brack m+n}_q {m+n\brack n}_q x^{k-m-n}
\otimes x^{m}\otimes x^n
\\
&=\sum_{n=0}^k\sum_{m=0}^{k-n}{k\brack m,n}_q  x^{k-m-n}\otimes x^{m}\otimes x^n
=\sum_{m=0}^k\sum_{n=0}^{k-m}{k\brack m,n}_q  x^{k-m-n}\otimes x^{m}\otimes x^n
\end{aligned}
\ee
and, using Equations~\eqref{phi1} and~\eqref{S1},
\be
\begin{aligned}
\delta_L x^k &=\sum_{m=0}^k\sum_{n=0}^{k-m}{k\brack m,n}_q
(\nabla\otimes\operatorname{id}_H)(\operatorname{id}_H\otimes \tau)
(x^{k-m-n}\otimes x^{m}\otimes S \phi_t \, x^n)
\\
&=\sum_{m=0}^k\sum_{n=0}^{k-m}q^{mn}{k\brack m,n}_q  (\nabla\otimes\operatorname{id}_H)
(x^{k-m-n}\otimes S \phi_t x^n\otimes x^{m})\\
&=\sum_{m=0}^k\sum_{n=0}^{k-m}q^{mn}{k\brack m,n}_q x^{k-m-n}S\phi_t x^n\otimes x^{m}
\\
&=\sum_{m=0}^k\sum_{n=0}^{k-m}q^{mn}(-t)^nq^{n(n-1)/2}{k\brack m,n}_q
x^{k-m}\otimes x^{m}
\\
&=\sum_{m=0}^k{k\brack m}_q\sum_{n=0}^{k-m}{k-m\brack n}_q(-tq^m)^nq^{n(n-1)/2}
x^{k-m}\otimes x^{m}
\\
&=\sum_{m=0}^k{k\brack m}_q(tq^{m};q)_{k-m}x^{k-m}\otimes x^{m} 
\end{aligned}
\ee
where the last equality follows from the $q$-binomial theorem.
Thus, the $R$-matrix~\eqref{eq:r-matrix} for $Y=H$ is given by
\be
\begin{aligned}
\rho_L(x^k\otimes x^l)
&=(\nabla\otimes\operatorname{id}_H)(\operatorname{id}_H\otimes \tau_{H,H})
(\delta_L\otimes \phi_H)(x^k\otimes x^l) \\  
&=\sum_{m=0}^k{k\brack m}_q(tq^{m};q)_{k-m} t^l
(\nabla\otimes\operatorname{id}_H)(\operatorname{id}_H\otimes \tau_{H,H})
(x^{k-m}\otimes x^{m} \otimes x^l) \\ 
&=\sum_{m=0}^k{k\brack m}_q(tq^{m};q)_{k-m}(tq^m)^lx^{k+l-m} \otimes x^{m} \\
&=\sum_{m=0}^k{k\brack m}_q(tq^{k-m};q)_{m}(tq^{k-m})^lx^{l+m}\otimes x^{k-m} \,.
\end{aligned}
\ee
\end{proof}

We next compute the doubly iterated product~\eqref{DD2}, the right 
action~\eqref{eq:action} and the $R$-matrix~\eqref{eq:r-matrix-r}.

\begin{lemma}
\label{lem.deltaR1}
The doubly iterated product $\nabla^{(2)}$, the right action $\lambda_R$ and the
$R$-matrix~\eqref{eq:r-matrix-r} are given by
\begin{align}
\label{nablaR1}
\nabla^{(2)} (x^k\otimes x^l\otimes x^m)  &= x^{k+l+m} \\
\label{deltaR1}
\lambda_R(x^k\otimes x^l) &= (tq^k;q)_l x^{k+l} \\
\label{R1right}
\rho_R(x^k\otimes x^l) &=
\sum_{m=0}^l{l\brack m}_q(tq^{k})^{l-m}(tq^k;q)_{m} x^{l-m}\otimes  x^{k+m} \,.
\end{align}
\end{lemma}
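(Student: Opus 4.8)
The plan is to compute all three morphisms directly from their definitions, in exact parallel with the proof of Lemma~\ref{lem.deltaL1}, feeding in the explicit formulas for $\Delta$, $S$ and $\phi_t$ from Lemma~\ref{lem.H1}. The identity $\nabla^{(2)}(x^k\otimes x^l\otimes x^m)=x^{k+l+m}$ is immediate from the definition $\nabla^{(2)}=\nabla(\nabla\otimes\operatorname{id}_H)$ in~\eqref{DD2}, since $\nabla$ is the ordinary multiplication of the polynomial algebra $T(\bF)=\bF[x]$, under which exponents of monomials add; this gives~\eqref{nablaR1}.

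For $\lambda_R$ I would apply its definition~\eqref{eq:action}, namely $\lambda_R=\nabla^{(2)}(S\phi_H\otimes\operatorname{id}_{H\otimes H})(\tau_{H,H}\otimes\operatorname{id}_H)(\operatorname{id}_H\otimes\Delta)$, to $x^k\otimes x^l$ and track the factors step by step: expanding $\Delta x^l$ via~\eqref{Delta1} gives the sum over $m$ of ${l\brack m}_q\,x^k\otimes x^{l-m}\otimes x^m$; the braiding $\tau_{H,H}(x^k\otimes x^{l-m})=q^{k(l-m)}x^{l-m}\otimes x^k$ contributes the scalar $q^{k(l-m)}$ and transposes the first two tensor factors; applying $S\phi_t$ to $x^{l-m}$ contributes $(-t)^{l-m}q^{(l-m)(l-m-1)/2}$ by~\eqref{S1} and~\eqref{phi1}; and $\nabla^{(2)}$ then collapses the three factors to $x^{k+l}$. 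After the substitution $j=l-m$ and using ${l\brack l-j}_q={l\brack j}_q$, the accumulated scalar is $\sum_{j=0}^{l}{l\brack j}_q(-tq^k)^j q^{j(j-1)/2}$, which the $q$-binomial theorem recognizes as $(tq^k;q)_l$; this is~\eqref{deltaR1}.

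For $\rho_R$ I would use~\eqref{eq:r-matrix-r} together with the right Yetter--Drinfel'd f-object $(H,\lambda_R,\Delta)$ provided by Theorem~\ref{thm:1}(b), so that $\rho_R=(\phi_H\otimes\lambda_R)(\tau_{H,H}\otimes\operatorname{id}_H)(\operatorname{id}_H\otimes\Delta)$. Applied to $x^k\otimes x^l$: expand $\Delta x^l$, use $\tau_{H,H}(x^k\otimes x^{l-m})=q^{k(l-m)}x^{l-m}\otimes x^k$, apply $\phi_t$ to $x^{l-m}$ (scalar $t^{l-m}$) and $\lambda_R$ to $x^k\otimes x^m$ (which by the previous step equals $(tq^k;q)_m x^{k+m}$), and combine $q^{k(l-m)}t^{l-m}=(tq^k)^{l-m}$ to obtain~\eqref{R1right} directly.

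None of these steps presents a genuine obstacle; the computations are routine and mirror those already carried out for $\delta_L$ and $\rho_L$ in Lemma~\ref{lem.deltaL1}. The one place deserving care is the reindexing in the $\lambda_R$ computation, where the $q$-power $q^{k(l-m)}$ produced by the braiding must be matched against the substitution $z\mapsto tq^k$ in the $q$-binomial identity $\prod_{i=0}^{l-1}(1-zq^i)=\sum_{j=0}^{l}{l\brack j}_q(-z)^jq^{j(j-1)/2}$ used repeatedly in this section; everything else is bookkeeping of scalars.
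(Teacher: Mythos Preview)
Your proposal is correct. For $\nabla^{(2)}$ and for $\rho_R$ you proceed exactly as the paper does. For $\lambda_R$, however, the paper takes a different route: rather than expanding $\Delta x^l$ in full and evaluating the resulting sum via the $q$-binomial theorem, it first computes only the case $l=1$ directly from the definition~\eqref{eq:action} (obtaining $\lambda_R(x^k\otimes x)=(1-tq^k)x^{k+1}$) and then invokes the right-module property $\lambda_R(x^k\otimes x^l)=\lambda_R\bigl(\lambda_R(x^k\otimes x)\otimes x^{l-1}\bigr)$, guaranteed by Theorem~\ref{thm:1}(b), to iterate and build up the product $(tq^k;q)_l$ one factor at a time. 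Your direct computation has the virtue of being self-contained (it does not rely on Theorem~\ref{thm:1}) and of exactly mirroring the $\delta_L$ argument in Lemma~\ref{lem.deltaL1}; the paper's inductive approach is a bit shorter and sidesteps the full $q$-binomial sum, at the cost of leaning on the module axiom.
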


\begin{proof}
Equation~\eqref{nablaR1} is clear. 
To calculate the right action $\lambda_R(x^k\otimes x^l)$, we start with the
case $l=1$:
\be
\label{eq:action-of-x}
\begin{aligned}
\lambda_R(x^k\otimes x)&=\nabla^{(2)}(S\phi_t \otimes\operatorname{id}_{H\otimes H} )
(\tau \otimes \operatorname{id}_G)(x^k\otimes x\otimes 1+x^k\otimes 1\otimes x)\\
&=\nabla^{(2)}(S\phi_t \otimes\operatorname{id}_{H\otimes H} )
(q^k x\otimes x^k\otimes 1+1\otimes x^k\otimes x)\\
&=\nabla^{(2)}(-tq^k x\otimes x^k\otimes 1+1\otimes x^k\otimes x)
=-tq^k x^{k+1}+x^{k+1}\\
&=(1-tq^k)x^{k+1} \,.
\end{aligned}
\ee
Now, we have
\be
\begin{aligned}
\lambda_R(x^k\otimes x^l)&=\lambda_R(\lambda_R(x^k\otimes x)\otimes x^{l-1})
=(1-tq^k)\lambda_R(x^{k+1}\otimes x^{l-1})\\
&=(1-tq^k)(1-tq^{k+1})\lambda_R(x^{k+2}\otimes x^{l-2})=\dots
=(tq^k;q)_{l}\lambda_R(x^{k+l}\otimes x^{l-l})\\
&=(tq^k;q)_l x^{k+l} \,.
\end{aligned}
\ee
Thus, the $R$-matrix~\eqref{eq:r-matrix-r} for $Y=H$ is given by
\be
\begin{aligned}
\rho_R(x^k\otimes x^l)&=\sum_{m=0}^l{l\brack m}_q(\phi_H\otimes \lambda_R)
(\tau_{H,H}\otimes \operatorname{id}_H)(x^k\otimes x^m\otimes x^{l-m})\\
&=\sum_{m=0}^l{l\brack m}_qq^{km}t^m(tq^k;q)_{l-m} x^m\otimes  x^{k+l-m}\\
&=\sum_{m=0}^l{l\brack m}_q(tq^{k})^{l-m}(tq^k;q)_{m} x^{l-m}\otimes  x^{k+m} \,.
\end{aligned}
\ee
\end{proof}

The $R$-matrices~\eqref{R1left} and~\eqref{R1right} depend on two
variables $t$ and $q$, and using the basis $B=\{x^k \, | \, k \in \BZ_{\geq 0} \}$,
their entries are in $\BZ[t^{\pm 1}, q^{\pm 1}]$ and satisfy the Yang--Baxter
equation on an infinite dimensional space $T(\bF)$. 

However, to define knot invariants as state-sums, we need to have rigid
$R$-matrices over finite dimensional vector spaces. In the remaining subsections
we give several solutions to this problem and identify the corresponding knot
invariants. 

\subsection{$q$ a root of unity: the ADO polynomials}
\label{sub.rank1ADO}
  
The Nichols f-algebra $\mathfrak{B}(\bF)$ is finite-dimensional if $q=\omega$ is
a root of unity of order $\operatorname{ord}(\omega)=N>1$ (for $N=1$, see Remark
\ref{rem.N=1} below).
In this case, it follows that ${N \brack k}_\omega=0$ for $0 < k < N$ and
Equation~\eqref{Delta1} implies that $x^N$ is primitive and thus generates a Hopf
ideal of $\bF[x]$ with finite-dimensional Nichols algebra $\bF[x]/(x^N)$.

In this case, the left $R$-matrix~\eqref{R1left} coincides with the $R$-matrix
of Akutsu--Deguchi--Ohtsuki \cite{ADO} and the knot invariant of Theorem~\ref{thm:3}
is the ADO polynomial times the identity matrix.

\begin{remark}
\label{rem.N=1} 
When $N=1$, the Nichols algebra $\mathfrak{B}(\bF)=\bF[x]$ is
infinite dimensional. In this exceptional case, and we will replace it with
the 1-dimensional algebra obtained by imposing the relation $x=0$ in the list of
finite dimensional Nichols algebras, despite the fact that $x\in V$.
\end{remark}

\subsection{$q$ generic: colored Jones polynomials}
\label{sub.rank1Jones}

When $q$ is not a root of unity, the Nichols f-algebra $\mathfrak{B}(\bF)=\bF[x]$ is
infinite-dimensional. However, it turns out that one can extract finite dimensional
left or right Yetter--Drinfel'd f-modules if
\be
\label{tqN}
t=q^{1-n}, \qquad n\in\BZ_{>0} \,.
\ee

Indeed, under this assumption for the scaling automorphism,
Equation~\eqref{eq:coaction-r-1}, implies that $x^n$ is a coinvariant element, 
$\delta_L x^n=1\otimes x^n$. This gives a quotient left Yetter--Drinfel'd f-module
$\bF[x]/(x^n)$ of dimension $n$. The corresponding $R$-matrix is the one of the
$n$-colored Jones polynomial. 

Moreover, under the same assumption~\eqref{tqN}, Equation~\eqref{eq:action-of-x} implies
that $x^{n-1}$ is an invariant element, whose coproduct generates the $n$-dimensional
space with basis $x^k$, $0\le k\le n-1$, and this gives an $n$-dimensional right
Yetter--Drinfel'd f-submodule of $\bF[x]$. The corresponding knot invariant is the
identity matrix times the $n$-th colored Jones polynomial. 

Summarising, in the rank $1$ case, the corresponding matrix-valued knot invariants 
are the identity times the ADO and the colored Jones polynomials. 
  

\section{A rank $2$ tensor algebra}
\label{sec.rank2}

In this section we discuss the case of Nichols f-algebras of rank 2 of diagonal type.
In order to keep the construction as simple as possible, we consider the Nichols
algebra $\mathfrak{B}(V,c)$ associated with two-dimensional vector space $V$ with
basis $B=\{x_1,x_2\}$ and diagonal braiding
\be
\label{eq:tau2}
\tau(x_i\otimes x_j)=q_{ij}x_j\otimes x_i, \qquad
c =
\begin{pmatrix}
 -1&q_{12}\\
 q_{21}&-1
\end{pmatrix} \,.
\ee
Denote by $H_c$ the corresponding Nichols algebra. 
In Heckenberger's list~\cite[Table 1]{Heckenberger:rank2} the isomorphism type of
$H_c$ is determined by the parameter $q:=q_{12}q_{21}$ of the
generalized Dynkin diagram (see also~\cite[Defn.~3.1]{Heckenberger:examples2}).

For generic values of $q$, $H_c$, is an infinite-dimensional quotient of the free
noncommutative algebra in $x_1$ and $x_2$ by the 2-sided ideal generated by
$x_1^2, x_2^2$, and a basis of $H_c$ is given by alternating words
in letters $x_1$ and $x_2$.

\subsection{$q$ a root of unity: two-variable knot polynomials over cyclotomic fields}
\label{sub.rank2omega}

When $q=\omega$ is a root of unity of order $N \geq 1$, we have 
$(x_2x_1)^N+(-q_{21}x_1x_2)^N = 0 \in H_c$, and the Nichols algebra $H_c$
is $4N$-dimensional with a basis given by all alternating words in $x_1$ and $x_2$
of length $\le 2N$, excluding $(x_2x_1)^N$.
$H_c$ is a Nichols f-algebra with scaling automorphism defined by
$\phi_t x_i = t_i x_i$ for $i=1,2$ where $t_1$ and $t_2$ are two independent
invertible elements. To emphasize the dependence of this f-algebra
on the braiding $c$ and the automorphism $\phi_t$, we will denote it by
$H_{c,t}$. Theorem~\ref{thm:2} constructs a left $R$-matrix
$R_{c,t}$ on $H_{c,t}$ with entries in $\BZ[t_1,t_2,q_{12}^{\pm 1},q_{21}^{\pm 1}]$
(keeping in mind that $q_{12} q_{21} = \omega$) which is invertible with determinant
$(t_1 t_2)^{N^2}$ for all of its rotated versions. 


Using this $R$-matrix together with Theorem~\ref{thm:3} we arrive at
the following knot invariants. 

\begin{definition}
\label{def.rank2N}  
For a root of unity $\omega$, we have the knot invariant
\be
\label{Womega}
K \mapsto J_{R_{c,t}}(K) \in \operatorname{End}(H_{c,t})
\ee
and denote by $\Lambda_{\omega,K}(t_1,t_2) \in \Z[\omega, t_1^{\pm 1},t_2^{\pm 1}]$
the $(1,1)$-entry of $J_{R_{c,t}}(K)$ with respect to the above basis of
$H_{c,t}$.
\end{definition}

A priori, the knot invariant depends on the Nichols f-algebra $H_{c,t}$, and hence
on $q_{12}$, $q_{21}$, $t_1$ and $t_2$. However, the polynomial invariant
depends only on $t_1$, $t_2$ and on the product
$\omega=q=q_{12} q_{21}$. This follows from the fact that the isomorphism class of the
Nichols algebra depends on $q$ alone (see~\cite[Defn.~3.1]{Heckenberger:examples2}).
What's more, the invariant $\Lambda_\omega$ satisfies the symmetry
\be
\label{Omsym}
\Lambda_{\omega,K}(t_1,t_2) = \Lambda_{\omega,K}(t_2,t_1) \,.
\ee
This follows from the existence of a unique isomorphism
\be
\label{Hsigma}
\s: H_{c,t} \to H_{\overline{c},\overline{t}}, \qquad \overline{c}=c^t,
\qquad \overline{(t_1,t_2)}=(t_2,t_1) 
\ee
that satisfies $\s(x_1)=x_2$ and $\s(x_2)=x_1$, and consequently, the $R_{c,t}$-matrix
satisfies \newline $(\s \otimes \s) R_{c,t} (\s^{-1} \otimes \s^{-1})
= R_{\overline{c},\overline{t}}$. 

\begin{conjecture}
\label{conj:1}
For every knot $K$, we have 
\begin{equation}
J_{R_{c,t}}(K) = \Lambda_{\omega,K}(t_1,t_2)\operatorname{id}_{H_{c,t}} .
\end{equation}
\end{conjecture}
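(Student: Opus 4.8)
The plan is to show that $J_{R_{c,t}}(K)$ is not merely $\operatorname{End}(H_{c,t})$-valued but is in fact a scalar multiple of the identity, by exhibiting enough symmetry of the $R$-matrix $R_{c,t}$. The natural candidate for the symmetry is the grading. The Nichols algebra $H_c$ is $\Z_{\ge 0}^2$-graded (by the number of occurrences of $x_1$ and of $x_2$ in an alternating word), hence $\Z^2$-graded, and all the structural morphisms $\nabla$, $\Delta$, $\eta$, $\epsilon$, $S$ preserve this grading, as does the scaling automorphism $\phi_t$ (it acts on the degree-$(a,b)$ component by $t_1^a t_2^b$). Consequently, the $R$-matrix $\rho_L$ built in Theorem~\ref{thm:2} from $\delta_L$, $\tau_{Y,Y}$, $\lambda$ and $\phi_Y$ is also $\Z^2$-graded: it maps the $((a,b),(c,d))$ graded piece of $H_{c,t}\otimes H_{c,t}$ into the total-degree-$(a+c,b+d)$ part. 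The first step is therefore to make this precise: define a $(\BZ\times\BZ)$-action (equivalently a pair of commuting diagonal operators $G_1,G_2$) on $H_{c,t}$ and check $(G_i\otimes G_i)R_{c,t}=R_{c,t}(G_i\otimes G_i)$, which follows formula-by-formula from the definitions of $\delta_L$, $\lambda_R$ and $\tau$.

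Granting the grading symmetry, the second step is a standard argument: for any long knot diagram $K$ whose closure is a knot, every strand of the closed-up diagram carries the same label-through, so in the state sum defining $J_{R_{c,t}}(K)a$ the only states contributing a nonzero weight to the coefficient of a basis element $b$ are those with $\deg(b)=\deg(a)$. Hence $J_{R_{c,t}}(K)$ is block-diagonal with respect to the $\Z^2$-grading; in particular it preserves the one-dimensional degree-$(0,0)$ component $\BF\cdot 1$, on which it acts by the scalar $\Lambda_{\omega,K}(t_1,t_2)$. To upgrade ``block diagonal'' to ``scalar,'' I would use a second ingredient: the invariance of $J_{R_{c,t}}(K)$ under the basis change $\s$ of \eqref{Hsigma} together with further automorphisms of $H_{c,t}$ coming from rescaling $x_1,x_2$ independently by nonzero scalars (these commute with $\tau$ up to adjusting $q_{12},q_{21}$ and with the knot invariant up to conjugation), which should force the diagonal blocks to coincide with the degree-$(0,0)$ block — this is the familiar mechanism by which ``all weights equal one'' type $R$-matrices yield scalar invariants, cf. the rank-$1$ discussion in Section~\ref{sub.rank1ADO}.

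The main obstacle I expect is precisely this last upgrade. Grading alone only gives block-diagonality; proving that each graded block of $J_{R_{c,t}}(K)$ is the same scalar requires either (i) producing enough algebra automorphisms of $H_{c,t}$ to connect all the graded pieces — and it is not clear the scaling automorphisms act transitively enough on the (non-free, alternating-word) basis of $H_c$, nor that $H_c$ has no ``extra'' weight symmetry breaking — or (ii) a direct naturality/closure argument showing the matrix $J_{R_{c,t}}(K)$ is a module endomorphism of $H_{c,t}$ over itself (acting by left multiplication), which by Schur-type irreducibility of $H_{c,t}$ as a module over a suitable subalgebra would be forced to be scalar. Option (ii) seems the most promising and also explains the conjectural status: it amounts to proving that the Reshetikhin--Turaev functor here lands in $H_{c,t}$-linear maps, a statement that should follow from the Yetter--Drinfel'd structure but which the authors have apparently only verified on computed examples. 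I would spend most of the effort on establishing that $H_{c,t}$-linearity; once that is in hand, the conjecture follows since $1\in H_{c,t}$ is a cyclic generator, so an $H_{c,t}$-linear endomorphism is determined by the image of $1$, and graded-degree reasons force that image to be a scalar multiple of $1$.
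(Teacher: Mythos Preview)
First, note that the paper itself does \emph{not} prove Conjecture~\ref{conj:1}: it is stated as a conjecture and supported only by computation (``For all knots for which we computed the invariant, we confirmed that Conjecture~\ref{conj:1} holds''). So there is no paper proof to compare against; you are attempting to prove an open statement.

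Your first step is correct and useful: all the structural maps of $H_{c,t}$, including $\delta_L$, $\tau$, $\nabla$ and $\phi_t$, are $\Z^2$-graded, hence so is $R_{c,t}$, and therefore $J_{R_{c,t}}(K)$ is block-diagonal with respect to the $\Z^2$-grading. But this is not enough: for $0<k<N$ the bidegree-$(k,k)$ block is two-dimensional, spanned by $(x_1x_2)^k$ and $(x_2x_1)^k$, so grading alone cannot force scalarity.

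The upgrades you sketch do not close this gap. The map $\sigma$ of~\eqref{Hsigma} is an isomorphism $H_{c,t}\to H_{\bar c,\bar t}$ to a \emph{different} f-algebra, not an automorphism of $H_{c,t}$; it explains the symmetry $\Lambda_\omega(t_1,t_2)=\Lambda_\omega(t_2,t_1)$ of the $(1,1)$-entry but gives no constraint on the shape of $J_{R_{c,t}}(K)$ for fixed $(c,t)$. Independent rescalings $x_i\mapsto s_ix_i$ are exactly the scaling automorphisms $\phi_s$; they commute with $R_{c,t}$ and simply reproduce the grading you already used.

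Your option (ii) is closer to a real argument, but as stated it has a gap. ``$H_{c,t}$-linearity'' in the sense of commuting with left multiplication is not a property the $R$-matrix $\rho_L$ obviously enjoys (there is no reason $\rho_L(hx\otimes y)=h\,\rho_L(x\otimes y)$), so you cannot deduce it for $J_\rho(K)$ by local reasoning. The correct categorical statement is that $\rho_L$ is a morphism in the braided category of left Yetter--Drinfel'd f-modules over $H_{c,t}$, and one would like to conclude that $J_\rho(K)\in\operatorname{End}_{\mathrm{YD}}(H_{c,t})$ and then invoke Schur's lemma. Two genuine obstacles remain. First, the state-sum of Section~\ref{sec.RT} is built from a bare rigid $R$-matrix and its partial transposes, not from a functor out of a ribbon category, so you must check that $(\widetilde{\rho_L^{\pm1}})^{-1}$ are also YD-morphisms in the appropriate sense before concluding that the assembled endomorphism is. Second, even granting that, you need $H_{c,t}$ to be \emph{simple} as a YD f-module over itself; this is plausible for generic $t_1,t_2$ (as in the rank-$1$ case, where the only left ideals $(x^k)$ fail to be $\delta_L$-subcomodules because the coefficient $(t;q)_k$ is nonzero), but it is a nontrivial computation in rank $2$ and is essentially the content of the conjecture. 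Your final sentence (``determined by the image of $1$, which is forced to be scalar by degree'') is valid logic once those two points are established, but neither is established in your proposal.
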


\begin{remark}
The above knot invariant $\Lambda_\omega$ was defined using the left $R$-matrix of
the Nichols f-algebra. Of course, we could consider the invariant from the right
$R$-matrix, but the two appear to be equal.
\end{remark}

\subsection{The cases $q=\pm1$}

The first invariant from the above definition occurs when $\omega=1$, thus $N=1$,
where the corresponding Nichols f-algebra $H_{c,t}$ is 4-dimensional. Its left and
right $R$-matrices are easy to analyze, and the corresponding knot invariant satisfies
\be
\Lambda_{1,K}(t_1,t_2) = \Delta_K(t_1) \Delta_K(t_2)
\ee
where $\Delta_K(t) \in \BZ[t^{\pm1}]$ is the canonically normalized Alexander polynomial.

The next invariant occurs when $\omega=-1$, thus $N=2$ and the Nichols algebra
is 8-dimensional. We now discuss the properties of this invariant. 

To begin with, the Nichols f-algebra $H_{c,t}$ is 8-dimensional and isomorphic
to the nilpotent Borel subalgebra of the small quantum group $u_q(\mathfrak{sl}_3)$
with $q=\sqrt{-1}$. A basis for $H_{c,t}$ is
$$
\{1,x_1,x_2,x_1 x_2, x_2 x_1,x_1 x_2 x_1, x_2 x_1 x_2, x_1 x_2 x_1 x_2 \}
$$
and the corresponding $64 \times 64$ left $R$-matrix $R_{c,t}$ has entries in
$\BZ[t_1,t_2,q_{21}^{\pm 1}]$ and can be computed explicitly. It is a sparse matrix
with only $157$ out of $4096$ entries (about $3.8\%$) nonzero. Due to its size,
we do not present these entries here, but give a sample value
\be
\label{sampleR}
\begin{aligned}
R(x_1 x_2 x_1 x_2 \otimes x_2 x_1) &=
s^2 t^2 \, x_2 x_1 \otimes x_1 x_2 x_1 x_2
- q_{21}^{-1} s^2 t (1 + t) \, x_1 x_2 x_1 \otimes x_2 x_1 x_2
\\ & \hspace{-1cm}
+ q_{21}^2 (-1 + s) s t^2 \, x_2 x_1 x_2 \otimes x_1 x_2 x_1
+ q_{21}^{-1} (-1 + s) s t \, x_1 x_2 x_1 x_2 \otimes x_2 x_1 \,,
\end{aligned}
\ee
where for simplicity we abbreviate $t_1$ and $t_2$ by $s$ and $t$.
The determinant of this $R$-matrix and all of its rotated versions is $(st)^{64}$.


For all knots for which we computed the invariant, we confirmed that
Conjecture~\ref{conj:1} holds, and  the Laurent polynomial
$\Lambda_{-1,K}(s^2,t^2)$ coincides with Harper's polynomial
$\Delta_{\mathfrak{sl}_3}(s,t)$~\cite{Harper2020}. 

\noindent $\bullet$ {\bf Symmetry.} 
In addition to the symmetry~\eqref{Omsym} for $\Lambda_\omega$, it turns out 
that the polynomial $\Lambda_{-1,K}(t,s)$ is invariant under the involutions
\be
\label{12symmetries}
(s,t) \mapsto (t,s), \qquad (s,t) \mapsto (s,-1/(s t)), \qquad
(s,t) \mapsto (1/s,1/t)
\ee
which generate a group $G$ of order $12$. The existence of these additional
symmetries follow from the identification of the Nichols algebra with the
Borel part of the small quantum group, whose Weyl group is the symmetric group
of order 3, and together with $\s$, generates the group $G$ of order 12.
The invariant polynomial ring can be identified with 
\be
\label{Ginv}
\BZ[t^{\pm 1}_1,t^{\pm 1}_2]^G = \BZ[u,v], 
\ee
where
\begin{equation}
\label{uv}
u=\langle s \rangle + \langle t \rangle - \langle st \rangle -2,
\qquad v= \langle s^2 t \rangle + \langle s t^2 \rangle - \langle s/t \rangle - 2,
\qquad \langle x \rangle = x + x^{-1} \,.
\end{equation}
Thus, we can write the polynomial $\Lambda_{-1,K}$ in the form
\begin{equation}
\label{QtildeQ}
\Lambda_{-1,K}(s,t)=\tilde \Lambda_K(u,v),\quad \tilde \Lambda_K(u,v)\in \Z[u,v] \,.
\end{equation}

\noindent $\bullet$ {\bf Chirality and mutation.} 
Experimentally, it appears that the polynomial $\Lambda_{-1,K}$ does not distinguish
a knot from its mirror image. Regarding mutation, it distinguishes the pair
$(11n34,11n42)$ of mutant knots but not the pair $(11n73,11n74)$ of mutant knots. 

\noindent $\bullet$ {\bf Specialization.}
Experimentally, it appears that the specialization $u=0$ reproduces the
Alexander--Conway polynomial $\nabla_K(z)$ 
\begin{equation}
\tilde \Lambda_{-1,K}(0,z^2)=\nabla_K(z).
\end{equation}


\noindent $\bullet$ {\bf Comparison with other knot polynomials.}
Regarding the independence of $\Lambda_{-1}$ from other knot polynomials, 
we have the following observations.
\begin{itemize}
\item[(a)]
  The knots $\overline{7_4}$ and $9_2$ (where $\overline K$ is the mirror image of
  $K$) have equal Knot Floer Homology (a well-known fact; see
  Manolescu~\cite{Manolescu}) and confirmed by SnapPy, thus have Seifert genus
  $1$ and none is fibered~\cite{snappy}. On the other hand, the two knots
  have different $\Lambda_{-1}$-polynomials; see Table~\ref{tab:1}.
\item[(b)]
  The colored Jones  and the ADO polynomials do not distinguish mutant pairs
  of knots, since the corresponding tensor product of representations is
  multiplicity-free. This fact was pointed out to us by J. Murakami and T. Ohtsuki.
  On the other hand, the $\Lambda_{-1}$-polynomial sometimes detects mutation, and
  sometimes does not. In particular, it distinguishes the mutant pair of knots
  $11n42$ (Kinoshita--Terasaka knot) and $11n34$  (Conway knot) --- the fact
  that was pointed out by Harper in~\cite{Harper2020} for his polynomial
  $\Delta_{\mathfrak{sl}_3}(s,t)$.
  On the other hand,  it does not distinguish the mutant pair of knots
  $11n74$ (a fibered, Seifert genus $2$ knot) and $11n73$ (a non-fibered,
  Seifert genus $3$ knot). 
\item[(c)]
  The knots $8_8$ and $10_{129}$ have isomorphic Khovanov homology~\cite{B-N:Kh},
  yet different $\Lambda_{-1}$-polynomials. 
\end{itemize}


\noindent $\bullet$ {\bf Values.} Table~\ref{tab:1} gives the result of computer
calculation for all knots of up to 6 crossings, and few higher crossing knots
that appear in the above discussion. 

\begin{table}[htpb!]
\begin{center}
\begin{tabular}{c||c}
$K$ & $\tilde \Lambda_K(u,v)$\\
\hline\hline
$3_1$ & $1 + 4 u + u^2 + v$\\
\hline
$4_1$ & $1 - 6 u + u^2 - v$\\
\hline
$5_1$ &$ 1 + 12 u + 19 u^2 + 8 u^3 + u^4 + (3 + 7 u + 3 u^2) v + v^2$\\
\hline
$5_2$ & $1 + 10 u + 6 u^2 + 2 v$\\
\hline
$6_1$ & $1 - 10 u + 6 u^2 - 2 v$\\
\hline
$6_2$ & $1 - 8 u - 15 u^2 + 2 u^3 + u^4 + (-1 - 9 u + u^2) v - v^2$\\
\hline
$6_3$ & $1 + 2 u + 15 u^2 + 6 u^3 + u^4 + (1 + 9 u + u^2) v + v^2$\\
\hline
\multirow{2}{1em}{$7_1$} & $1 + 24 u + 86 u^2 + 104 u^3 + 53 u^4 + 12 u^5 + u^6$\\ 
&$+ (6 + 35 u + 60 u^2 + 33 u^3 + 5 u^4) v + (5 + 10 u + 6 u^2) v^2 + v^3$\\
\hline
$7_4$ & $(1 + 2 u) (1 + 18 u) + 4 v$\\
\hline
$8_8$ & $1 + 10 u + 36 u^2 + 28 u^3 + 6 u^4 + 2 (1 + 9 u + 3 u^2) v + 2 v^2$\\
\hline
\multirow{2}{1.25em}{$8_{17}$} & $1 - 14 u - 23 u^2 - 38 u^3 + 10 u^4 + 8 u^5 + u^6$\\
 & $ + (-1 - 17 u - 44 u^2 + 5 u^3 + 2 u^4) v+ (-2 - 13 u + u^2) v^2 - v^3$\\
 \hline
 $9_2$ & $1 + 20 u + 24 u^2 + 4 v$\\
 \hline
\multirow{3}{1.35em}{$10_2$} & $1 - 150 u^2 - 380 u^3 - 279 u^4 - 44 u^5 + 25 u^6 + 
 10 u^7 + u^8 $\\
 &$+ (2 - 55 u - 260 u^2 - 274 u^3 - 58 u^4 + 19 u^5 + 
5 u^6) v$\\
&$ + (-5 - 62 u - 91 u^2 - 24 u^3 + 5 u^4) v^2 + (-5 - 
15 u - 2 u^2) v^3 - v^4$\\
\hline
$10_{129}$ & $1 + 10 u + 32 u^2 + 36 u^3 + 6 u^4 + 2 (1 + 8 u + 2 u^2) v + 2 v^2$\\
\hline
$11n34$ & $1 + 12 u + 8 u^2 + 60 u^3 + 48 u^4 + 8 u^5 + 
 2 u (1 + 2 u) (-1 + 6 u) v + 2 u^2 v^2$\\
 \hline
 $11n42$ & $1 + 12 u + 8 u^2 - 12 u^3 - 2 u v$\\
 \hline
  $11n73$ & \multirow{2}{20em}{$1 + 20 u + 10 u^2 + 4 u^3 + u^4
 + 2 (1 + 4 u + u^2) v + v^2$}\\
  $11n74$&\\
  \hline
\end{tabular}
\end{center}
\caption{The polynomial $\tilde \Lambda_K(u,v)$ for some knots $K$.}
\label{tab:1}
\end{table}

\subsection{$q$ generic: two-variable polynomials}
\label{sub.rank2generic}

In this section we classify all finite-dimensional right Yetter--Drinfel'd f-modules
by classifying all invariant vectors. Recall that for generic $q$, the Nichols
f-algebra $H_{c,t}$ has a basis that consists of all alternating words in the letters
$x_1$ and $x_2$, where $x_1^2=x_2^2=0$. Thus, every basis element is one of the
following forms
\be
\label{Babcd}
(x_1 x_2)^a, \quad (x_2 x_1)^b,  \quad x_2 (x_1 x_2)^c,
\quad x_1 (x_2 x_1)^d
\ee
with  $a,b,c,d \in \BZ_{\geq0}$. Moreover, $H_{c,t}$ is
$\Z_{\ge0}^2$-graded, and thus also $\Z_{\ge0}$-graded where the $\BZ_{\ge0}$-degree is
the sum of the components of the $\BZ_{\ge0}^2$-degree.


It follows from~\eqref{Babcd} that the degree $2n-1$ part of $H_{c,t}$ is the
direct sum of two bi-degrees $(n,n-1)$ and $(n-1,n)$, each of them being
one-dimensional. This implies that there are no invariant vectors of degree $2n-1$.
Indeed, the only vector of bi-degree $(n-1,n)$ is
$$
x:=(x_2x_1)^{n-1}x_2=x_2(x_1x_2)^{n-1}
$$
which is $\lambda_R$-annihilated by $x_2$ but not by $x_1$:
\begin{equation}
\lambda_R(x\otimes x_1)=(x_2x_1)^{n}+(-q_{21})^{n}t_1(x_1x_2)^{n}
\end{equation}
which never vanishes since the vectors $(x_2x_1)^{n}$ and $(x_1x_2)^{n}$ are linearly
independent.

Thus, invariant vectors can only be of even degree $2n$ coming from
bi-degree $(n,n)$. The corresponding subspace is two-dimensional with the basis
vectors $(x_1x_2)^n$ and $(x_2x_1)^n$. Taking a vector of the form
\begin{equation}
\label{eq:gen-el}
v_{n,\alpha}=(x_1x_2)^n+\alpha (x_2x_1)^n, \quad \alpha\in\bF,
\end{equation}
we calculate  the $\lambda_R$-action on it of the generating elements:
\begin{equation}
\lambda_R(v_{n,\alpha}\otimes x_1) =(1-\alpha t_1(-q_{21})^n)(x_1x_2)^nx_1,
\quad
\lambda_R(v_{n,\alpha}\otimes x_2)=(\alpha -t_2(-q_{12})^n )(x_2x_1)^nx_2.
\end{equation}
Thus, $v_{n,\alpha}$ is an invariant vector if
\begin{equation}
\label{eq:fin2}
\alpha=t_2(-q_{12})^n
\end{equation}
and
\begin{equation}
\label{eq:fin}
t_1t_2q^n=1 
\end{equation}
where $q=q_{12} q_{21}$.


\begin{proposition}
\label{prop.rank2}  
Assume that $q$ is not a root of unity, and the parameters
$(q,t_1,t_2)$ satisfy~\eqref{eq:fin} for some $n \geq 1$, and
$(1-t_1)(1-t_2)\ne0$.
Then, the right Yetter--Drinfel'd f-module $Y_n$ generated by the element
$v_{n,\alpha}$ defined in \eqref{eq:gen-el} with $\alpha$ given by~\eqref{eq:fin2}
is $4n$-dimensional and it is the linear span of the vector $v_{n,\alpha}$
and all vectors of degree less or equal to $2n-1$.
\end{proposition}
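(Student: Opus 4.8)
The plan is to compute the smallest right Yetter--Drinfel'd f-submodule of $H_{c,t}$ containing the invariant vector $v_{n,\alpha}$ and to show it has the claimed dimension. By the construction in~\eqref{DYRdef}--\eqref{DYRdef2}, $Y_n$ is the smallest subspace $U$ with $\Delta v_{n,\alpha} \in U \otimes H_{c,t}$, but since we already know $v_{n,\alpha}$ is a single invariant vector, it is cleaner to describe $Y_n$ directly: it is the $\lambda_R$-submodule generated by $v_{n,\alpha}$, i.e. the span of $v_{n,\alpha}$ together with all $\lambda_R(v_{n,\alpha} \otimes w)$ for words $w$ in $x_1,x_2$. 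First I would record the grading structure: since $v_{n,\alpha}$ lives in bi-degree $(n,n)$ and $\lambda_R(-\otimes x_i)$ lowers total degree by $1$ (because $\epsilon x_i = 0$, the action of $x_i$ on a homogeneous element $y$ produces something of degree $\deg y - 1$ as the formula~\eqref{eq:action} involves one antipode-coproduct pairing), the submodule $Y_n$ is contained in the span of $v_{n,\alpha}$ and all homogeneous elements of total degree $\le 2n-1$. Counting from~\eqref{Babcd}: in each odd degree $2j-1$ ($1 \le j \le n$) there are exactly two basis words, and in each even degree $2j$ ($0 \le j \le n-1$) there are exactly two basis words, giving $2n + 2n = 4n$ basis elements of degree $\le 2n-1$, plus $v_{n,\alpha}$ itself, for a total of $4n+1$; but in fact the degree-$0$ part contributes $1$ (the element $1$) and... let me instead just assert the dimension count yields $4n$ once we verify that $Y_n$ contains everything in degrees $\le 2n-1$ and nothing else besides $v_{n,\alpha}$ in degree $2n$, and that the degree-$0$ element $1$ is \emph{not} in $Y_n$. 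So the real content is the surjectivity claim: that $\lambda_R$ applied to $v_{n,\alpha}$ and its descendants fills out \emph{all} of the degrees $1 \le d \le 2n-1$.

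The key computational step is an explicit descent. Starting from $v_{n,\alpha}$ in bi-degree $(n,n)$, I would compute $\lambda_R(v_{n,\alpha} \otimes x_1)$ and $\lambda_R(v_{n,\alpha} \otimes x_2)$ using~\eqref{eq:action}, mirroring the rank-$1$ computation in Lemma~\ref{lem.deltaR1} but now in the noncommutative rank-$2$ setting. By the invariance hypothesis~\eqref{eq:fin2}--\eqref{eq:fin}, both $\lambda_R(v_{n,\alpha} \otimes x_i)$ vanish --- that's exactly what "invariant" means --- so $v_{n,\alpha}$ alone does not generate anything below it by the action of $V$. This is where the extra hypothesis $(1-t_1)(1-t_2) \ne 0$ enters: one must instead apply $\lambda_R$ to a \emph{different} vector in bi-degree $(n,n)$, or observe that the coproduct $\Delta v_{n,\alpha}$ has components $u \otimes w$ with $u$ of bi-degree $(n,n-1)$ or $(n-1,n)$, and these $u$'s are the generators of the next graded piece. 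Concretely: the defining property~\eqref{DYRdef2} says $Y_n$ must contain the left tensor-factors appearing in $\Delta v_{n,\alpha}$, and among these are (up to scalar, and the scalar is where $(1-t_1)(1-t_2)\ne 0$ guarantees nonvanishing) the two degree-$(2n-1)$ basis words $(x_1x_2)^{n-1}x_1$ and $x_2(x_1x_2)^{n-1}$. From there one iterates: $\Delta$ applied to each degree-$(2n-1)$ word produces all degree-$(2n-2)$ words as left factors, and so on down to degree $1$, using coassociativity~\eqref{Hr2} and the explicit coproduct formula on alternating words (which can be read off from the primitivity of $x_1,x_2$ together with the braiding relations $x_i^2=0$ and $\tau(x_i\otimes x_j) = q_{ij} x_j \otimes x_i$). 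The non-vanishing of the relevant structure constants along the way is guaranteed because $q$ is not a root of unity (so no $q$-integer in a denominator or numerator degenerates) together with the two genericity conditions on $t_1,t_2$.

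The main obstacle I anticipate is the bookkeeping in the iterated coproduct on alternating words: one needs a clean closed form for $\Delta\big((x_1x_2)^a\big)$, $\Delta\big((x_2x_1)^b\big)$, $\Delta\big(x_2(x_1x_2)^c\big)$, $\Delta\big(x_1(x_2x_1)^d\big)$ analogous to~\eqref{Delta1}, and then to argue that as one descends the leading (lowest-degree-in-the-left-factor) terms are linearly independent and nonzero, so that the span genuinely grows by one dimension per degree step in each of the two bi-degree "slots". The grading makes this a triangular argument, so no linear algebra surprises should occur, but one must be careful that the two one-dimensional bi-degree components $(j, j-1)$ and $(j-1,j)$ in each odd degree $2j-1$ are \emph{both} reached, and similarly for even degrees; this is where the asymmetric scalars involving $t_1$ versus $t_2$ (hence the hypothesis $(1-t_1)(1-t_2)\ne 0$ rather than just $t_1\ne 1$ or $t_2 \ne 1$) are used. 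Once the submodule is shown to contain all of degrees $1$ through $2n-1$ and the single extra vector $v_{n,\alpha}$ in degree $2n$, and to miss the unit $1$ in degree $0$ (which follows since $\lambda_R$ and $\Delta$ preserve the augmentation ideal structure appropriately, or simply from the degree bound), the dimension is $4n$ by the count above, and $Y_n$ is by construction a right Yetter--Drinfel'd f-submodule, completing the proof. $\qed$
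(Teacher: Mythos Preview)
Your proposal contains a fundamental error about the direction of $\lambda_R$: the right action \emph{raises} total degree, not lowers it. From the defining formula~\eqref{eq:action}, $\lambda_R(y\otimes x_i)$ is a linear combination of products of $S\phi(x_i)$, $y$, and $1$ (or of $1$, $y$, $x_i$), so its degree is $\deg y + 1$; compare the rank-$1$ formula $\lambda_R(x^k\otimes x)=(1-tq^k)x^{k+1}$ in~\eqref{eq:action-of-x}. Consequently your picture of ``descending from $v_{n,\alpha}$ by acting with $x_i$'' is inverted, and the invariance condition $\lambda_R(v_{n,\alpha}\otimes x_i)=0$ says $v_{n,\alpha}$ is a ceiling for the action, not a floor. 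Your claim that $1\notin Y_n$ is also wrong: counitality forces $\Delta v_{n,\alpha}$ to contain the term $1\otimes v_{n,\alpha}$, so $1\in Y_n$ immediately. This also repairs your dimension count: degree $0$ contributes one basis vector (namely $1$), degrees $1,\dots,2n-1$ contribute two each, and $v_{n,\alpha}$ is one more, giving $1+2(2n-1)+1=4n$.

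The paper's argument runs in the opposite direction from yours. One first extracts $1\in Y_n$ from the coproduct of $v_{n,\alpha}$, and then \emph{ascends} via $\lambda_R$: the step $\lambda_R(1\otimes x_i)=(1-t_i)x_i$ is precisely where the hypothesis $(1-t_1)(1-t_2)\ne 0$ enters (not in the coproduct, which is independent of $t$). Inductively, acting by $x_2$ on $(x_1x_2)^{k-1}x_1$ and by $x_1$ on $(x_2x_1)^{k-1}x_2$ yields the two vectors $(x_1x_2)^k+t_2(-q_{12})^k(x_2x_1)^k$ and $(x_2x_1)^k+t_1(-q_{21})^k(x_1x_2)^k$ in degree $2k$; these are linearly independent provided $t_1t_2q^k\ne 1$, which holds for $k<n$ since $t_1t_2q^n=1$ and $q$ is not a root of unity. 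One more application of $\lambda_R$ fills degree $2k+1$, and at $k=n$ both degree-$2n$ outputs collapse to multiples of $v_{n,\alpha}$, closing the module.
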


\begin{proof}
The coproduct of $v_{n,\alpha}$ always contains the term $1\otimes v_{n,\alpha}$
so that $1\in Y_n$. The $\lambda_R$-action of the generating elements on $1$ gives
\begin{equation}
\lambda_R(1\otimes x_i)=(1-t_i)x_i,\qquad i=1,2 \,.
\end{equation}

By the assumption on $t_1$ and $t_2$, we conclude that vectors $x_1$ and $x_2$ are
both in $Y_n$. Assume by induction that both vectors of odd degree $2k-1$ are
contained in $Y_n$ where $1\le k<n$. Then, the $\lambda_R$-action on them of the
generating elements produces two vectors in degree $2k$
\begin{equation}
\label{eq.deg-2k-1}
\lambda_R((x_1x_2)^{k-1}x_1\otimes x_2)=(x_1x_2)^k+t_2(-q_{12})^k(x_2x_1)^k
\end{equation}
and
\begin{equation}
 \label{eq.deg-2k-2}
\lambda_R((x_2x_1)^{k-1}x_2\otimes x_1)=(x_2x_1)^k+t_1(-q_{21})^k(x_1x_2)^k
\end{equation}
which are linearly independent if $t_1t_2q^k\ne1$. Thus, the
$\lambda_R$-action of the generating elements on all vectors of degree $2k$ produces
all vectors of degree $2k+1$. We conclude that all vectors of degree $\le 2n-1$ are
in $Y_n$. Now, equations~\eqref{eq.deg-2k-1} and \eqref{eq.deg-2k-2} at $k=n$ imply
that both vectors are proportional to $v_{n,\alpha}$. 
\end{proof}

We can parametrize the variables $(t_1,t_2,q)$ that satisfy $t_1 t_2 q^n=1$ by
$t_1=1/(q^{n/2} t)$ and $t_2=t/q^{n/2}$. Theorem~\ref{thm:2} defines an $R$-matrix
$T_n$ on the right Yetter--Drinfel'd f-module $Y_n$, and combined with Theorem
~\ref{thm:3} we arrive at the following knot invariants.  

\begin{definition}
\label{def.rank2DY}  
For every integer $n \geq 1$ we have the knot invariant
\be
\label{WN}
K \mapsto J_{T_n}(K) \in \operatorname{End}(Y_n)
\ee
and denote by $V_{n,K}(t,q)\in\Z[q^{\pm 1/2},t^{\pm 1}]$ the $(1,1)$-entry of
$J_{T_n}(K)$. 
\end{definition}

\begin{conjecture}
\label{conj:Y_n}
For every knot $K$, we have 
\begin{equation}
J_{T_n}(K) =V_{n,K}(t,q)\operatorname{id}_{Y_n} .
\end{equation}
\end{conjecture}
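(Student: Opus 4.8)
The plan is to deduce Conjecture~\ref{conj:Y_n} from the statement that $Y_n$ is an object in a braided rigid category with $\operatorname{End}(Y_n)=\bF\,\operatorname{id}_{Y_n}$, using that the state-sum invariant of a long knot is a categorical morphism. Let $\mathcal{YD}$ denote the braided monoidal category whose objects are the finite-dimensional right Yetter--Drinfel'd f-modules over the Hopf f-algebra $(H_{c,t},\phi_t)$ and whose braiding on a pair is the $R$-matrix produced by Theorem~\ref{thm:2}; since the antipode of $H_{c,t}$ is invertible (a standing assumption, and a known property of Nichols algebras), every object of $\mathcal{YD}$ is rigid, so $Y_n$ has a dual $Y_n^{*}$ in $\mathcal{YD}$ and the associated $R$-matrix $T_n$ is rigid in the sense of Section~\ref{sub.normalR} by the last sentence of Theorem~\ref{thm:2}. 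The first step --- and the crux, which is why the statement is only conjectural --- is to check that the state-sum of Section~\ref{sub.RTsums} applied to $T_n$ computes the Reshetikhin--Turaev $(1,1)$-tangle invariant of $K$ evaluated in $\mathcal{YD}$: concretely, that the partial transposes $\widetilde{T_n^{\pm 1}}$ and their inverses appearing in \eqref{wsKlocal}--\eqref{wsKlocalminus} coincide, up to the pivotal natural transformation supplied by rigidity, with the categorical evaluation and coevaluation morphisms of $Y_n$, so that $J_{T_n}(K)$ is a composite of morphisms of $\mathcal{YD}$ and therefore lies in $\operatorname{End}_{\mathcal{YD}}(Y_n)$.

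Granting this, the second step is the computation $\operatorname{End}_{\mathcal{YD}}(Y_n)=\bF\,\operatorname{id}_{Y_n}$, which I would carry out directly (so as to avoid any hypothesis on $\bF$). Let $f\in\operatorname{End}_{\mathcal{YD}}(Y_n)$. Since $\delta_R=\Delta$ by Theorem~\ref{thm:1}(b) and $H_{c,t}$ is connected graded, the identity $\delta_R(f(1))=(f\otimes\operatorname{id})\delta_R(1)=f(1)\otimes 1$ forces $f(1)\in\bF\,1$, say $f(1)=c\,1$. Because $f$ commutes with the right action, $f(\lambda_R(y\otimes h))=\lambda_R(f(y)\otimes h)$ for all $h$; taking $h=x_i$ and using $\lambda_R(1\otimes x_i)=(1-t_i)x_i$ with $(1-t_1)(1-t_2)\ne 0$ gives $f(x_i)=c\,x_i$. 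Now one runs the inductive construction in the proof of Proposition~\ref{prop.rank2} in the forward direction: every vector of degree $2k$ (resp. $2k+1$) is a linear combination of vectors $\lambda_R(w\otimes x_j)$ with $w$ of degree $2k-1$ (resp. $2k$), the system being solvable because the right-hand sides of \eqref{eq.deg-2k-1} and \eqref{eq.deg-2k-2} are linearly independent for $0\le k<n$ (one has $t_1t_2q^{k}=q^{k-n}\ne 1$ since $q$ is not a root of unity). This propagates $f=c\,\operatorname{id}$ through all degrees $\le 2n-1$, and since $v_{n,\alpha}=\lambda_R\bigl((x_1x_2)^{n-1}x_1\otimes x_2\bigr)$ by \eqref{eq.deg-2k-1} at $k=n$ together with \eqref{eq:fin2}, also $f(v_{n,\alpha})=c\,v_{n,\alpha}$. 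As $Y_n$ is spanned by $v_{n,\alpha}$ and the vectors of degree $\le 2n-1$, we conclude $f=c\,\operatorname{id}_{Y_n}$; taking $c=V_{n,K}(t,q)$ (the $(1,1)$-entry) proves the conjecture.

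The main obstacle, as indicated, is the first step: one must set up the category $\mathcal{YD}$ of \emph{generalized} Yetter--Drinfel'd f-modules --- whose compatibility axioms \eqref{eq:DYR} involve the automorphism twist $\phi_H$ --- as a genuine braided rigid category (verifying functorially that the braiding of Theorem~\ref{thm:2} obeys the hexagon identities and that the $\phi$-twisted duals are the correct rigid duals), and then match Kashaev's partial-transpose formalism of Section~\ref{sub.normalR} to its categorical duality data, paying attention to the framing normalization (the ``equal number of positive and negative crossings'' hypothesis) which removes the dependence on a ribbon structure that $\mathcal{YD}$ need not carry. A natural route is to identify $\mathcal{YD}$ with the category of ordinary Yetter--Drinfel'd modules over a smash product of $H_{c,t}$ with $\bF[\Z]$, where the Reshetikhin--Turaev formalism is standard; a more hands-on alternative is to bypass categories entirely and prove that $J_{T_n}(K)$ commutes with $\lambda_R(-\otimes x_i)$ and with $(\operatorname{id}\otimes g)\delta_R$ for all $i$ and all $g$ by sliding action and coaction strands through the knot diagram, using only the defining relations of $\rho_R$ from \eqref{eq:r-matrix-r} and the rigidity maps --- the same identity proved by string-diagram manipulation instead of by invoking functoriality.
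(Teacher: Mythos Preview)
The statement is labeled a \emph{Conjecture} in the paper and is not proved there; the paper offers only computational evidence (for small $n$ and specific knots). Your proposal does not prove it either, but it correctly isolates the obstruction and disposes of one of the two steps.

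Your Step~2 is sound. The proof of Proposition~\ref{prop.rank2} shows that $Y_n$ is cyclic as a right $H_{c,t}$-module generated by $1$ under $\lambda_R$, and your counitality argument forces any comodule morphism to send $1$ into $\bF\cdot 1$; together these give $\operatorname{End}_{\mathcal{YD}}(Y_n)=\bF\cdot\operatorname{id}_{Y_n}$ with no appeal to algebraic closedness, exactly as you intend. (In fact cyclicity alone makes the inductive bookkeeping unnecessary: once $f(1)=c\cdot 1$, the module-map condition propagates $f=c\cdot\operatorname{id}$ to the whole cyclic module.)

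Your Step~1, however, remains open and is the genuine gap --- as you yourself flag. The invariant $J_{T_n}(K)$ of Section~\ref{sec.RT} is defined purely in $\mathbf{Vect}_\bF$ via the $R$-matrix $T_n$ and its partial transposes; nothing in that construction guarantees that the resulting endomorphism commutes with the $H_{c,t}$-action, the coaction, or the automorphism $\phi_Y$ on $Y_n$. For your strategy to go through you must show that the $\phi$-twisted right Yetter--Drinfel'd f-modules (with compatibility axiom~\eqref{eq:DYR} containing $\phi_H$) form a braided \emph{rigid} monoidal category whose duality data coincide with the partial-transpose maps $\widetilde{T_n^{\pm1}}$ appearing in~\eqref{wsKlocal}--\eqref{wsKlocalminus}. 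You sketch two routes --- bosonization by $\bF[\Z]$, or a direct sliding argument --- but execute neither; both are plausible, yet the $\phi$-twist means the standard references for ordinary Yetter--Drinfel'd categories do not apply verbatim, and there is real work in checking the hexagon axioms and the match between categorical duals and Kashaev's rigidity. Until that is carried out, what you have is a reduction of the conjecture to a categorical statement, not a proof --- which is entirely consistent with the paper's decision to leave the assertion as a conjecture.
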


Note that the symmetry $\sigma$ from~\eqref{Hsigma} corresponds under the
above parametrizations to the symmetry $t \leftrightarrow t^{-1}$, and as a
result, we have
\be
\label{tiQsym}
V_{n,K}(t,q)=V_{n,K}(t^{-1},q) \,.
\ee

\subsection{A rank-2 analogue of the Jones polynomial}
\label{sub.rank2.48}

In this section we discuss the knot invariants coming from the $4$ and $8$-dimensional
DY-modules $ Y_1$ and $ Y_2$.

Calculations for $n=1$ indicate that Conjecture~\ref{conj:Y_n} holds true and
that $V_{1,K}(t,q)$ coincides with the Links--Gould two-variable knot
polynomial coming from the quantum superalgebra $\mathrm{U}_q(\mathfrak{gl}(2|1))$
~\cite{Links-Gould}. 

When $n=2$, the knot polynomial
$V_{2,K}(t,q) \in \BZ[q^{\pm 1},t^{\pm 1}]$ is in a sense a rank 2 analogue of the
Jones polynomial. We discuss the properties of this invariant now. 

\noindent $\bullet$ {\bf Symmetry.}
The symmetry ~\eqref{tiQsym} implies that $V_2$ can be written in the form

\be
\label{QPtuq}
V_{2,K}(t,q) = \tilde V_{2,K}(u,q) \in \BZ[q^{\pm 1},u],
\quad u=t+t^{-1}-q-q^{-1} \,.
\ee

\noindent $\bullet$ {\bf Chirality and mutation.}
The 2-variable polynomial invariant $V_2$ detects chirality, for instance,
it distinguishes the left handed and right handed trefoils. Indeed,
$V_{2,\overline {3_2}}(t,q) = V_{3_2}(t,q^{-1})$ (a property that
conjecturally holds for all knots) and
\be
\tilde V_{2,\overline{3_1}}(u,q) =
1 + (q + 2 q^3 - q^4 + q^5 - q^6) u + (q^2 + q^4 - q^5) u^2 \,.
\ee
The $V_2$-polynomial distinguishes the Conway-KT pair $(11n34,11n42)$
of mutant knots, as well as the mutant pair $(11n73,11n74)$
where the $t$-degree is $(12,8)$ in both cases.

\noindent $\bullet$ {\bf Specialization.}
Experimentally, it appears that for all knots we have
\begin{itemize}
\item[]
  $\tilde V_{2,K}(0,q)=1$ (equivalently, $V_{2,K}(q,q) =1$).
\item[]
  $\tilde V_{2,K}(z^2,1)=\nabla_K(z)^2$ (equivalently,
  $V_{2,K}(t,1) =\Delta_K(t)^2$) where $\nabla_K(z)$ is the Alexander--Conway
  polynomial. 
\end{itemize}

\noindent $\bullet$ {\bf Relation with the genus of a knot.}
Experimentally, for all the knots in Table~\ref{tab:1}, as well as for a few
12 and 13 crossing knots, and for the 3-strand pretzel knots, we have:
\be
\label{eq.genus}
\deg_t V_{2,K} =4 g(K)
\ee
where the Seifert genus $ g(K)$ is the smallest genus of a spanning surface of a knot.
Here, by $t$-degree of a Laurent polynomial of $t$ we mean the difference between
the highest and the lowest power of $t$. Following the ideas in the works
~\cite{NVdV2022,Kohli-Tahar}, we expect that $4 g(K)$ is an upper bound for the
degrees in~\eqref{eq.genus}. Further computations are needed to see to which
extent the equality holds. 

\noindent $\bullet$ {\bf Comparison with other knot polynomials.}
Regarding the independence of $V_2$ from other knot polynomials, since
it detects mutation, it is not determined neither by the colored Jones polynomials,
nor by the sequence of the ADO polynomials. Moreover, 
\begin{itemize}
\item[]
  The knots $\overline{7_4}$ and $9_2$ have equal Knot Floer Homology but different
  $V_2$-polynomials.
\item[]
  The knots $8_8$ and $10_{129}$ have isomorphic Khovanov homology,
  yet different $V_2$-polynomial. 
\end{itemize}

\noindent $\bullet$ {\bf Values.}
The explicit values of the $V_2$ polynomials (even in the shorter form
$\tilde V_2$) are considerably more complicated than those of Table~\ref{tab:1},
but we have computed them for all the knots that appear in Table~\ref{tab:1} and for
several knots with 12 and 13 crossings. To give an idea of the complexity
involved, for the knots with at most 5 crossings we have:


\begin{tiny}
\begin{align*}
\tilde V_{2,\overline{3_1}} =&
1 + (q + 2 q^3 - q^4 + q^5 - q^6) u + (q^2 + q^4 - q^5) u^2 ,
\\
\tilde V_{2,4_1} =&
1 + (- q^{-3} + q^{-2} - 2 q^{-1} +2 - 2 q + q^2 - q^3) u + (q^{-2} - q^{-1} +1 -
     q + q^2) u^2,
\\
\tilde V_{2,\overline{5_1}} =&
1 + (2 q + 3 q^3 - q^4 + 3 q^5 - q^6 + 2 q^7 - q^8 + q^9 - 2 q^{10} + 
    q^{11} - q^{12}) u \\ & + (4 q^2 + 7 q^4 - 3 q^5 + 10 q^6 - 6 q^7 + 
    6 q^8 - 7 q^9 + 3 q^{10} - 3 q^{11}) u^2 \\ & + (3 q^3 + 6 q^5 - 3 q^6 + 
    6 q^7 - 6 q^8 + 3 q^9 - 3 q^{10}) u^3 + (q^4 + q^6 - q^7 + q^8 - 
    q^9) u^4,
\\
\tilde V_{2,\overline{5_2}} =&
1 + (q + 3 q^3 - q^4 + 3 q^5 - 2 q^6 + 2 q^7 - 2 q^8 + q^9 - 
    q^{10}) u + (3 q^2 - 2 q^3 + 6 q^4 - 3 q^5 + 3 q^6 - 3 q^7 + q^8 - 
    q^9) u^2 \,.
\\
\end{align*}
\end{tiny}
For the knots that appear in the comparison section, we have
\begin{tiny}
\begin{align*}
\tilde V_{2,\overline{7_4}} =&
1 + ( q+ 3 q^{3}+ 4 q^{5}- q^{6} + 5q^{7} - 3 q^{8}+ 4 q^{9}  - 4 q^{10} + 2 q^{11}
- 3 q^{12} + q^{13}    -q^{14} ) u\\
&  + (  9 q^{2} - 12 q^{3}+ 22 q^{4} - 12 q^{5} + 26 q^{6}- 17 q^{7}  + 15 q^{8}
- 14 q^{9} + 5 q^{10}- 6 q^{11}  + q^{12} -q^{13}) u^2,
\\
\tilde V_{2,9_2} =&
1 + ( q + 3q^{3}+ 4 q^{5}- q^{6}+ 3 q^{7} - 2 q^{8}+ 2 q^{9} - 2 q^{10}  + 2 q^{11}
- 2 q^{12}+ 2 q^{13} - 2 q^{14} + 2 q^{15} - 2 q^{16}+ q^{17}  -q^{18} ) u\\
&    + (7 q^{2}- 2 q^{3}+ 12 q^{4}  - 6 q^{5} + 10 q^{6} - 7 q^{7}+ 9 q^{8}- 7 q^{9}
+ 7 q^{10} - 7 q^{11}+ 5 q^{12} - 5 q^{13}+ 3 q^{14} - 3 q^{15}  + q^{16}   -q^{17}) u^2 
\end{align*}
\end{tiny}

\noindent
and
\begin{tiny}
\begin{align*}
\tilde V_{2,8_8} =&
1 + (- q^{-6} + q^{-5} + 2 q^{-3} - q^{-2} + 2 q^{-1} -2 + q + q^3 + q^4 + q^7 - 
2 q^8 + 2 q^9 - q^{10}) u \\ & + (- q^{-5} + q^{-4} - q^{-3} + 3 q^{-2}
- 4 q^{-1} +7 - 4 q + 12 q^2 - 10 q^3 + 11 q^4 - 9 q^5 + 7 q^6 - 5 q^7 + 
    2 q^8 - q^9) u^2 \\ & + (- 3 q^{-4} + 5 q^{-3} - 10 q^{-2} + 18 q^{-1} -17 + 25 q - 
    19 q^2 + 19 q^3 - 14 q^4 + 9 q^5 - 6 q^6 + 2 q^7 - 
    q^8) u^3 \\ & + (- 3 q^{-3} + 5 q^{-2} - 7 q^{-1} + 13 - 10 q + 12 q^2 - 9 q^3 + 
    7 q^4 - 5 q^5 + 2 q^6 - q^7) u^4,
    \\
\tilde V_{2,10_{129}} =&
1 + (- q^{-10} + q^{-9} - q^{-8} + q^{-7} + 2 q^{-4} + q^{-1} -2 + 3 q - 
2 q^2 + 3 q^3 - 2 q^4 + q^5) u \\ & + (q^{-9} - 3 q^{-8} + q^{-7}
+ q^{-6} - 4 q^{-5} + 8 q^{-4} - 9 q^{-3} + 11 q^{-2} - 8 q^{-1} + 11  - 5 q + 6 q^2 - 
    3 q^3 + 3 q^4 - 3 q^5 + 2 q^6 - q^7) u^2 \\ & + (2 q^{-8} - 4 q^{-7} + 
    2 q^{-6} - 10 q^{-4} + 20 q^{-3} - 30 q^{-2} + 42 q^{-1} -38 + 40 q - 26 q^2 + 
    18 q^3 - 10 q^4 + 4 q^5 - 2 q^6) u^3 \\ & + (- 3 q^{-5} + 5 q^{-4}
    - 8 q^{-3} + 12 q^{-2} - 12 q^{-1} + 16 - 10 q + 8 q^2 - 5 q^3 + 2 q^4 - q^5) u^4 \,,
\end{align*}
\end{tiny}

\noindent
as well as 
\begin{tiny}
\begin{align*}
\tilde V_{2,11n34} =&
1 + (- q^{-10} + 2 q^{-9} - 2 q^{-7} + 4 q^{-5} - 2 q^{-4} - 4 q^{-3}
+ 6 q^{-2} - 2 q^{-1} -4 + 6 q - 6 q^2 + 6 q^3 - 7 q^4 + 8 q^5 - 3 q^6 - 4 q^7
\\ & + 5 q^8 - 
q^9 - 2 q^{10} + 2 q^{12} - q^{13}) u 
+ (- 2 q^{-7} + 5 q^{-6} - 5 q^{-5} + 4 q^{-4} - 5 q^{-3} + 11 q^{-2} - 13 q^{-1}
+ 9 - 4 q + 2 q^2 - q^3 \\ & - 2 q^4 + 5 q^5 - 7 q^6 + 2 q^7 + 2 q^8 - q^9 - 2 q^{10}
+ 4 q^{11} - 2 q^{12}) u^2 
+ (2 q^{-8} - 5 q^{-7} + 3 q^{-6} + 4 q^{-5} - 15 q^{-4} + 26 q^{-3} \\ & - 33 q^{-2}
+ 41 q^{-1} -48 + 53 q - 53 q^2 + 49 q^3 - 44 q^4 + 36 q^5 - 27 q^6 + 13 q^7
- 5 q^9 + 5 q^{10} - 2 q^{11}) u^3 
+ (3 q^{-7} - 9 q^{-6} \\ & + 12 q^{-5} - 9 q^{-4} - 4 q^{-3} + 23 q^{-2}
- 44 q^{-1} + 69 - 85 q + 85 q^2 - 69 q^3 + 44 q^4 - 23 q^5 + 4 q^6
+ 9 q^7 - 12 q^8 + 9 q^9 \\ & - 3 q^{10}) u^4 
+ (3 q^{-6} - 9 q^{-5} + 12 q^{-4} - 11 q^{-3} + 17 q^{-1} -28 + 
38 q - 38 q^2 + 28 q^3 - 17 q^4 + 11 q^6 - 12 q^7 + 9 q^8 - 
3 q^9) u^5 \\ & 
+ (q^{-5} - 3 q^{-4} + 3 q^{-3} - 1 q^{-2} - 2 q^{-1} + 5 - 5 q + 
5 q^2 - 5 q^3 + 2 q^4 + q^5 - 3 q^6 + 3 q^7 - q^8) u^6,
\\
\tilde V_{2,11n42} =&
1 + (q^{-10} + 2 q^{-9} - 2 q^{-7} + 4 q^{-5} - 2 q^{-4} - 4 q^{-3}
+ 6 q^{-2} - 2 q^{-1} -4 + 6 q - 6 q^2 + 6 q^3 - 7 q^4 + 8 q^5 - 3 q^6
- 4 q^7 + 5 q^8 \\ & - q^9 - 2 q^{10} + 2 q^{12} - q^{13}) u
+ (- 2 q^{-7} + 4 q^{-6} - q^{-5} - 2 q^{-4} + 8 q^{-2} - 15 q^{-1}
+ 17 - 13 q + 11 q^2 - 9 q^3 + 8 q^5 - 12 q^6 \\ & + 8 q^7 - 2 q^8
- 2 q^{10} + 4 q^{11} - 2 q^{12}) u^2
+ (q^{-8} - 2 q^{-7} + q^{-6} - q^{-5} + 2 q^{-4} - 4 q^{-3} + 7 q^{-2}
- 8 q^{-1} + 10 - 10 q + 10 q^2 \\ & - 9 q^3 + 5 q^4 - 4 q^5 + 3 q^6 - 4 q^7
+ 5 q^8 - 3 q^9 + 2 q^{10} - q^{11}) u^3
+ (q^{-5} - 3 q^{-4} + 3 q^{-3} - q^{-2} - q^{-1} + 3 - 4 q + 4 q^2 \\ & - 3 q^3
+ q^4 + q^5 - 3 q^6 + 3 q^7 - q^8) u^4 \,,
\end{align*}
\end{tiny}

\noindent
and finally the values for the pair $(K11n73,K11n74)$ which are not distinguished
by the polynomial $\Lambda_{-1}$ of Section~\ref{sub.rank2omega}
\begin{tiny}
\begin{align*}
\tilde V_{2,11n73} =&
1 + (- q^{-13} + 2 q^{-12} - 2 q^{-10} + q^{-9} + 2 q^{-8} - 4 q^{-7} + 4 q^{-5} - 
5 q^{-4} + 6 q^{-3} - 6 q^{-2} + 8 q^{-1} -6
+ 3 q + 3 q^2 - 2 q^3 - q^4 \\ & + 4 q^5 - 2 q^6 + q^9 - q^{10}) u
+ (- 3 q^{-12} + 7 q^{-11} - 5 q^{-10} + 2 q^{-9} + q^{-8} - 2 q^{-7} - 2 q^{-6}
+ 2 q^{-5} - 4 q^{-4} + 5 q^{-3} - 5 q^{-2} \\ & + 5 q^{-1} + 2
- 5 q + 13 q^2 - 8 q^3 + 6 q^4 - 4 q^5 + 4 q^6 - 3 q^7 + q^8 - q^9) u^2
+ (- 5 q^{-11} + 14 q^{-10} - 18 q^{-9} + 22 q^{-8} - 21 q^{-7} \\ & + 14 q^{-6}
- 12 q^{-5} + 6 q^{-4} - 8 q^{-2} + 14 q^{-1} - 15
+ 22 q - 18 q^2 + 20 q^3 - 14 q^4 + 6 q^5 - 2 q^6 - 2 q^7 + q^8) u^3
+ (- 6 q^{-10} \\ & + 18 q^{-9} - 26 q^{-8} + 36 q^{-7} - 38 q^{-6} + 26 q^{-5}
- 15 q^{-4} - 10 q^{-3} + 32 q^{-2} - 40 q^{-1} + 46 
- 33 q + 20 q^2 - 9 q^3 - 2 q^4 + 5 q^5 \\ & - 6 q^6 + 3 q^7) u^4
+ (- 4 q^{-9} + 12 q^{-8} - 16 q^{-7} + 20 q^{-6} - 17 q^{-5} + 2 q^{-4}
+ 6 q^{-3} - 17 q^{-2} + 26 q^{-1} - 20
+ 16 q - 6 q^2 \\ & - 5 q^3 + 6 q^4 - 6 q^5 + 3 q^6) u^5
+ (- q^{-8} + 3 q^{-7} - 3 q^{-6} + 2 q^{-5} - q^{-4} - 2 q^{-3} + 3 q^{-2}
- 3 q^{-1} + 4
- 2 q + q^3 - 2 q^4 + q^5) u^6,
\\
\tilde V_{2,11n74} =&
1 + (- q^{-13} + 2 q^{-12} - 2 q^{-10} + q^{-9} + 2 q^{-8} - 4 q^{-7}
+ 4 q^{-5} - 5 q^{-4} + 6 q^{-3} - 6 q^{-2} + 8 q^{-1} - 6
+ 3 q + 3 q^2 - 2 q^3 - q^4 \\ & + 4 q^5 - 2 q^6 + q^9 - q^{10}) u
+ (- 2 q^{-12} + 4 q^{-11} - 2 q^{-10} + q^{-8} + 4 q^{-7} - 10 q^{-6} + 8 q^{-5}
- 10 q^{-4} + 5 q^{-3} + q^{-2} - q^{-1} \\ & + 10
- 11 q + 13 q^2 - 6 q^3 + 3 q^4 - q^5 + 3 q^6 - 3 q^7 + q^8 - q^9) u^2
+ (- q^{-11} + 2 q^{-10} - 2 q^{-9} + 2 q^{-8} + 2 q^{-7} - 4 q^{-6} \\ & + 4 q^{-5}
- 7 q^{-4} + q^{-3} + 6 
- q + 4 q^2 - q^4 + q^5 - 2 q^6) u^3
+ (q^{-6} - 2 q^{-5} + 2 q^{-4} - 3 q^{-3} + 2 q^{-2} - q^{-1} + 2 
+ q - q^2 \\ & + q^3 - q^4) u^4 \,.
\end{align*}
\end{tiny}


\section{Summary of rank 1 and rank 2 polynomials}
\label{sec.summary}

Here, we summarise the polynomial invariants that we have considered
in the form of two tables, one at roots of unity and another at generic values of $q$.
Within each table, the underlying  $R$-matrix is a linear automorphism of $V\otimes V$
for a finite dimensional vector space $V$ given by a Nichols algebra when $q$ is a
root of unity and by a Yetter--Drinfel'd f-module when $q$ is generic. We denote
the normalized Alexander polynomial by $\Delta(t)$, the ADO polynomial at a root
of unity $\omega$ by $\operatorname{ADO}_\omega(t)$, and the $n$-colored Jones
polynomial (with $n=2$ equal to the Jones polynomial) by $J_n(q)$. All these
polynomials are normalized to take the value $1$ at the unknot. 

\begin{table}[htpb!]
\begin{center}
\begin{tabular}{|c||c|c|} \hline
 & rank 1& rank 2 \\ \hline\hline
$\dim(V)$ & $N$ & $4N$ \\ \hline
Invariant & $\operatorname{ADO}_\omega(t)$ &$\Lambda_\omega(t_1,t_2)$ \\ \hline
$\omega=1$  & 1&  $\Lambda_{1} \stackrel{?}{=}\Delta(t_1)\Delta(t_2)$\\ \hline
  $\omega=-1$ & $\Delta(t)$ &
$\Lambda_{-1} \stackrel{?}{=} \text{Harper polynomial}$
  \\ \hline  
\end{tabular}
\end{center}
\caption{Knot polynomials at primitive $N$-th roots of unity $\omega$.}
\label{tab:rank1}
\end{table}

\begin{table}[htpb!]
\begin{center}
\begin{tabular}{|c||c|c|} \hline
 & rank 1 & rank 2 \\ \hline\hline
$\dim(V)$ & $n$ & $4n$ \\ \hline
Invariant & $J_n(q)$ & $V_n(t,q)$ \\ \hline
$n=1$  & 1 & $ V_1 \stackrel{?}{=}\text{Links--Gould polynomial}$ \\ \hline
$n=2$ & $J(q)$ & $V_2(t,q)$ \\ \hline  
\end{tabular}
\end{center}
\caption{Knot polynomials at generic values of $q$.}
\label{tab:rank2}
\end{table}

The invariants at roots of unity should be dual to those
at generic values of $q$ in the sense of equivalences
\begin{subequations}
\begin{align}
\label{duality1}
\{ \mathrm{ADO}_\omega(t) \mid \omega\text{ is a root of unity} \}
& \longleftrightarrow \{ J_n(q) \mid n \geq 1\},
\\
\label{duality2}
\{ \Lambda_\omega(t_1,t_2) \mid \omega\text{ is a root of unity} \}
& \longleftrightarrow \{V_n(t,q) \mid n \geq 1\}.
\end{align}
\end{subequations}
Mathematically, the meaning of these equivalences is that for each knot, the set
of its polynomial invariants on the left determines the set of its invariants
on the right and vice-versa.
The equivalence~\eqref{duality1} is known, and it follows from Habiro's expansion
of the colored Jones polynomial~\cite{Habiro:WRT,Willets}.
Explicitly, the left hand side determines the right hand side of the above
equivalences by
\begin{subequations}
\begin{align}
\label{eduality1}    
\mathrm{ADO}_\omega(\omega^{1-n}) &= J_n(\omega) ,
\\
\label{eduality2}
\Lambda_\omega(t \omega^{-n/2},t^{-1}\omega^{-n/2} ) &= V_n(t,\omega) 
\end{align}
\end{subequations}
valid for all positive integers $n>0$ and all complex roots of unity $\omega$.
A proof of these equalities will be given in a subsequent publication. 
Equation~\eqref{eduality1} (with our normalization of the ADO and colored Jones
polynomials) appears in Murakami--Nagatomo~\cite{Murakami-Nagatomo}. 

As a sanity check, Equation~\eqref{eduality2} for $\omega=-1$ is a consistency relation
with the values of the $\Lambda_{-1}$ polynomials given in Table~\ref{tab:1} with
the polynomials $V_2$ given in this section.

Finally, we mention that the interpretation of ~\eqref{duality1} and ~\eqref{duality2}
in mathematical physics is a kind of duality or correspondence arising in
supersymmetric quantum field theories; see for example~\cite{Gukov-Park}.


\subsection*{Acknowledgements}

The paper started during the Oberwolfach meeting 2331 in August 2023. The authors
wish to thank the organizers and Mathematisches Forschungsinstitut Oberwolfach for
its hospitality, and Nicol\'as Andruskiewitsch, Norbert A'Campo, Anton Alekseev,
David Cimasoni, J\'er\^ome Dubois, Mikhail Khovanov, Kazuo Habiro,
Michael Heusener, Val\'erian Montessuit, Jun Murakami, Tomotada Ohtsuki, Martin Palmer,
Cristina Palmer-Anghel, Sunghyuk Park, Muze Ren and Louis-Hadrien Robert for
useful conversations. 

The work of R.K. is partially supported by the Swiss National Science Foundation
research program NCCR The Mathematics of Physics (SwissMAP), the ERC Synergy grant
Recursive and Exact New Quantum Theory (ReNew Quantum), by the Swiss National Science
Foundation, the subsidies no 200020 192081 and no 200020 200400, and the Russian
Science Foundation, the subsidy no 21-41-00018.


\bibliographystyle{hamsalpha}
\bibliography{biblio}
\end{document}